\documentclass[aos]{imsart}

\RequirePackage{amsthm,amsmath,amsfonts,amssymb}
\RequirePackage{xcolor}
\RequirePackage{mathrsfs}
\RequirePackage{algcompatible}
\RequirePackage{tabularx,array}
\RequirePackage{lineno}
\RequirePackage[protrusion=true,expansion=true,final]{microtype}
\RequirePackage{graphicx}
\RequirePackage[numbers,sort&compress]{natbib}
\RequirePackage[colorlinks,citecolor=blue,urlcolor=blue,linkcolor=blue]{hyperref}
\RequirePackage{booktabs}
\RequirePackage{multirow}
\RequirePackage{placeins}

\makeatletter
\def\journal@name{}
\def\journal@url{}
\setpkgattr{copyright}{text}{}
\makeatother

\startlocaldefs
\theoremstyle{plain}
\newtheorem{theorem}{Theorem}[section]
\newtheorem{lemma}[theorem]{Lemma}

\newtheorem{corollary}[theorem]{Corollary}
\newtheorem{claim}[theorem]{Claim} 

\theoremstyle{definition}
\newtheorem{assumption}{Assumption}[section]

\newtheorem{remark}{Remark}[section]

\def\bxi{\boldsymbol{\xi}}

\def\mid{\,|\,}
\def\sep{\;\big|\;}
\def\sign{\mathop{\mathrm{sign}}}
\def\sgn{\mathop{\mathrm{sign}}}

\def\argmax{\mathop{\text{\rm arg\,max}}}
\def\diag{\operatorname{diag}}

\def\Cov{\operatorname{Cov}}
\newcommand{\RR}{\mathbb{R}}
\newcommand{\NN}{\mathbb{N}}
\newcommand{\PP}{\mathbb{P}}
\newcommand{\EE}{\mathbb{E}}
\newcommand{\II}{\mathbb{I}}
\newcommand{\R}{\mathbb{R}}
\newcommand{\eps}{\epsilon}

\newcommand{\cA}{\mathcal{A}} \newcommand{\cB}{\mathcal{B}} \newcommand{\cC}{\mathcal{C}}
\newcommand{\cD}{\mathcal{D}} \newcommand{\cE}{\mathcal{E}} \newcommand{\cF}{\mathcal{F}}
 \newcommand{\cH}{\mathcal{H}} \newcommand{\cI}{\mathcal{I}}
  
\newcommand{\cM}{\mathcal{M}} \newcommand{\cN}{\mathcal{N}} 
  
\newcommand{\cS}{\mathcal{S}}  \newcommand{\cU}{\mathcal{U}}
\newcommand{\cV}{\mathcal{V}}

\newcommand{\bbeta}{\boldsymbol{\beta}}
\newcommand{\bSigma}{\boldsymbol{\Sigma}}
\newcommand{\bTheta}{\boldsymbol{\Theta}}
\newcommand{\bmu}{\boldsymbol{\mu}}
\newcommand{\bv}{\boldsymbol{v}}
\newcommand{\br}{\boldsymbol{r}}

\def\bLambda{\mathbf{\Lambda}}

\def\eb{\boldsymbol{e}}

\def\Ab{\mathbf{A}}

\def\Ib{\mathbf{I}}
\def\Bb{\mathbf{B}}

\def\bcS{\boldsymbol{\mathcal{S}}}

\let\hat\widehat
\let\tilde\widetilde
\def\bS{\boldsymbol{S}}

\let\emptyset\varnothing

\def\Rb{\mathbf{R}}
\newcommand{\bel}{\begin{eqnarray}\label}
\newcommand{\eel}{\end{eqnarray}}
\newcommand{\bes}{\begin{eqnarray*}}
\newcommand{\ees}{\end{eqnarray*}}

\def\##1\#{\begin{align}#1\end{align}}
\def\$#1\${\begin{align}#1\end{align}}

\newcolumntype{M}{>{\centering\arraybackslash}m{0.17\textwidth}}

\long\def\comment#1{}

\def\thetab{\boldsymbol{\theta}}

\newtheorem*{example*}{Example}

\newtheorem{example}{Example}

\endlocaldefs

\begin{document}

\begin{frontmatter}

\title{Post-Learning Inference for Combinatorial Optimizers with High-Dimensional Sparse Contextual Information via Minimal Directional Perturbation}
\runtitle{Post-Learning Inference for Combinatorial Optimizers}

\begin{aug}
\author[A]{\fnms{Pengyu}~\snm{Li}\thanksref{t1}\ead[label=e1]{pengyuli@u.nus.edu}}
\and
\author[A]{\fnms{Shuting}~\snm{Shen}\thanksref{t1,t2}\ead[label=e2]{shuting\_shen@nus.edu.sg}}

\address[A]{Department of Statistics and Data Science,
National University of Singapore\printead[presep={,\ }]{e1,e2}}

\thankstext{t1}{The authors are listed in alphabetical order.}
\thankstext{t2}{Corresponding author.}
\runauthor{P. Li and S. Shen}
\end{aug}
\begin{abstract}
We study post-learning inference for structural properties of data-dependent combinatorial optimizers. The target is whether an oracle optimizer, rather than a latent parameter or smooth functional, belongs to a prescribed class, such as a category-mix, inventory, or resource-feasibility class. We focus on a high-dimensional contextual multinomial logit model with sequentially adaptive data collection, where the parameter-to-optimizer map is discontinuous and the policy induces temporal dependence. We propose a novel perturbation test based on a nonsmooth max-difference revenue statistic comparing the best null assortment with the best alternative assortment. The test perturbs the estimated terminal revenue surface on the selected support: random unit directions capture directional uncertainty, while the minimal perturbation radius captures magnitude uncertainty and yields a p-value. This localizes inference near the null--alternative boundary and avoids uniform error control over the full candidate class. The data are collected by an \(\ell_1\)-penalized online likelihood policy that performs variable selection while controlling regret. Using a new anti-concentration argument for Gaussian maxima differences and martingale Gaussian coupling, we establish uniform estimation rates, effective support recovery, and asymptotic validity of the proposed p-value under adaptive assortment selection. We prove asymptotic size control and power consistency under a localized signal condition.
\end{abstract}

\begin{keyword}[class=MSC]
\kwd[Primary ]{62F03}
\kwd{62F07}
\kwd[; secondary ]{62J12}
\kwd{62J07}
\kwd{62L10}
\end{keyword}
\begin{keyword}
\kwd{irregular inference}
\kwd{high-dimensional inference}
\kwd{adaptive data collection}
\kwd{directional perturbation test}
\kwd{post-regularization inference}
\kwd{combinatorial optimization}
\end{keyword}

\end{frontmatter}


\section{Introduction}\label{sec:intro}

Many modern statistical decision problems involve learning an objective from data and then selecting an optimizer from a large combinatorial class. In such settings, the inferential target is often not the latent parameter, nor a smooth low-dimensional functional of it, but a property of the optimizer itself: whether an optimal action belongs to a prescribed structural class, satisfies a resource rule, or is compatible with a deployment constraint.

This is an irregular post-selection problem. The map from the latent parameter to the optimizer is discontinuous; near ties, active maximizers may change under arbitrarily small perturbations. The statistic determining the hypothesis is naturally a difference of two maxima rather than a smooth functional. Adaptive data collection further introduces temporal dependence, since the observations used for terminal inference are generated under decisions that depend on earlier outcomes. In modern applications, the ambient feature dimension may also be large, while only a sparse subset of variables is relevant. Standard Wald-type, delta-method, and fixed-design high-dimensional inference tools therefore do not directly apply.

This paper develops an inferential framework for this irregular high-dimensional post-learning problem. We use contextual assortment optimization as a concrete and practically important model, while the max-difference perturbation principle can extend to other post-learning inference problems for combinatorial optimizers.

Assortment optimization is central in revenue management and personalized recommendation, where a seller or platform selects a subset of products to maximize expected revenue subject to operational constraints. Classical work studies retail assortment design and operational restrictions \citep{Cachon2005,Mantrala2009,Kok2015}, while later work considers richer choice and ranking preference models \citep{Blanchet2016,Aouad2018}. A standard model for customer choice is the random utility model, with the multinomial logit (MNL) model being especially prominent because of its analytical tractability \citep{McFadden1973,Talluri2004,Gallego2004}. In modern platforms, product attractiveness may vary with item attributes, user context, and time-varying information, motivating personalized and contextual MNL formulations \citep{Golrezaei2014,Cheung2017,Chen2020context}.

Most existing work on MNL assortment optimization focuses on learning the optimal assortment and minimizing regret. In online assortment optimization, the platform sequentially updates its estimate of customer preferences and chooses assortments to balance exploration and exploitation under capacity or structural constraints \citep{Caro2007,Rusmevichientong2010,Saure2013,Agrawal2017,ChenWang2018,Agrawal2019,Chen2020context}. This literature has been extended to robust settings under model misspecification \citep{chen2019robust}, personalized decisions using customer-level features \citep{BesbesZeevi2015,Golrezaei2014,Cheung2017}, and contextual or recommendation-based settings with richer side information \citep{ChenMaSimchiLeviXin2024,ChenOwenPixtonSimchiLevi2022}. These works provide tools for sequential decision-making. Our problem is different: after adaptive learning produces a terminal objective, we ask for a valid p-value for a structural property of the oracle optimizer.

Such inferential questions arise naturally in practice. A decision maker may not need to recover the exact revenue-maximizing assortment, especially when several assortments have nearly identical revenues. Instead, the relevant question may be whether the optimal assortment contains a designated collection of core products, preserves category coverage, satisfies an operational or inventory restriction, or excludes a product class without a statistically meaningful revenue loss \citep{Cachon2005,Mantrala2009,Kok2015,Talluri2004,Rusmevichientong2010,ChenMaSimchiLeviXin2024,shen2023combinfassort}. Each question asks whether the optimizer belongs to a prescribed subclass of feasible actions. Equivalently, it asks for inference on the sign of the gap between the best action inside the structural class and the best action outside it. This max-difference representation is the statistical object at the center of the paper.

The closest related works on assortment-level inference are \cite{shen2023combinfassort} and \cite{belloni2025mnlinf}. \cite{shen2023combinfassort} develop an offline combinatorial inference framework for the optimal assortment under the uncapacitated MNL model. Their approach exploits the revenue-ordered structure of the classical MNL model \citep{Talluri2004}, reducing the problem to inference on sign changes in revenue gaps. This structure does not extend to general constrained MNL settings \citep{Rusmevichientong2010,ChenMaSimchiLeviXin2024} or to contextual MNL models, where utilities depend on time-varying contextual information \citep{Chen2020context}. Moreover, the offline setting does not capture the dependence induced by online learning, where each offered assortment depends on past outcomes through the current estimate of customer preferences. \cite{belloni2025mnlinf} study post-learning online inference for the MNL model in a low-dimensional contextual setting and propose an \(\epsilon\)-net procedure for inference on the optimal assortment. Their method does not address the high-dimensional sparse regime considered here. In addition, their inference relies on uniform error control over all candidate assortments; even with the proposed approximation procedure, this global control can be conservative and may limit power near the decision boundary.

We consider a high-dimensional contextual MNL model under adaptive data collection. At each period \(t=1,\ldots,T\), for each product \(j=1,\ldots,n\), we observe a contextual vector \(\bv_{tj}\in\RR^p\) and a revenue \(r_{tj}\in\RR\), with \(0\) denoting the no-purchase option. The vector \(\bv_{tj}\) may encode item attributes, merchandising variables, customer- or session-level context, and interaction features \citep{Golrezaei2014,Cheung2017,Chen2020context}. We allow \(p\) to be much larger than \(T\), reflecting rich feature dictionaries constructed from transaction logs, catalog metadata, clickstream records, and interaction histories \citep{Chen2020context,JiangLiZhang2024,GillenMonteroMoonShum2019}. The utility weight of product \(j\) at time \(t\) is modeled as
\[
    u_{tj}^*=\exp(\bv_{tj}^{\top}\bbeta^*),
\]
where \(\bbeta^*\in\RR^p\) is an unknown sparse coefficient vector. For any offered assortment \(\cS\subseteq[n]\), the choice probability is
\[
\PP_{\bbeta^*,\bv_t}(j\mid \cS)
=
\frac{\exp(\bv_{tj}^{\top}\bbeta^*)}
{1+\sum_{j'\in\cS}\exp(\bv_{tj'}^{\top}\bbeta^*)},
\qquad j\in\cS.
\]
The sparsity assumption is a structural statistical restriction: many candidate features may be available, but only a relatively small subset is expected to have first-order predictive relevance. Parsimonious demand models have long been useful in retail applications \citep{GuadagniLittle1983}, and sparsity regularization is standard for stabilization and feature selection in high-dimensional generalized linear and discrete choice models \citep{vanDeGeer2008,NegahbanOhThekumparampilXu2018,GillenMonteroMoonShum2019}.

As choice outcomes are collected, the platform updates its estimate of \(\bbeta^*\) and adaptively selects the offered assortment \(\cS_t\). At the terminal period \(T\), our inferential object is the oracle optimal assortment \(\cS_T^*\), defined as a maximizer of the expected revenue under the realized terminal contextual information \((\bv_T,\br_T)\). For clarity, we first describe the testing problem in the case of a unique optimizer; the formal formulation in Section~\ref{sec:setup} treats the optimizer as a set and allows for ties. Let \(\bcS^K\) denote the feasible class of candidate assortments satisfying the cardinality constraint \(K\le n\), and let \(\bcS_0\subseteq\bcS^K\) denote the subclass satisfying a structural property of interest. We study
\[
H_0:\ \cS_T^*\in\bcS_0
\qquad\text{versus}\qquad
H_1:\ \cS_T^*\notin\bcS_0,
\]
conditional on \((\bv_T,\br_T)\) and using the data collected before period \(T\). The subclass \(\bcS_0\) may encode any discrete structural property of the feasible assortment class. The following example provides a concrete formulation.

\begin{example*}[Product inclusion test]
Let \(\cA\subseteq[n]\) be a designated set of products. We test whether all products in
\(\cA\) are included in the optimal assortment at time \(T\):
\[
H_0:\ \cA\subseteq\cS_T^*
\qquad\text{versus}\qquad
H_1:\ \cA\not\subseteq\cS_T^* .
\]
Equivalently, this corresponds to \(\cS_T^*\in\bcS_0\), where
\[
\bcS_0:=\{\cS\in\bcS^K:\cA\subseteq\cS\}.
\]
\end{example*}

The main statistical challenge is that the hypothesis concerns the discontinuous map
\[
\bbeta\mapsto
\argmax_{\cS\in\bcS^K}R(\cS\mid\bbeta,\bv_T,\br_T),
\]
where \(R(\cS\mid\bbeta,\bv_T,\br_T)\) is the expected revenue of assortment \(\cS\) and the candidate class \(\bcS^K\) may be exponentially large. A natural benchmark is to control the maximal plug-in revenue error
\begin{equation}\label{eq: max rev diff}
\max_{\cS\in\bcS^K}
\left|
R(\cS\mid\hat\bbeta,\bv_T,\br_T)
-
R(\cS\mid\bbeta^*,\bv_T,\br_T)
\right|.
\end{equation}
A first-order approximation and Gaussian or bootstrap calibration of this statistic can be used to build a confidence set for \(\cS_T^*\). Such uniform-error calibration is conceptually simple, but it is typically conservative for the present testing problem: it calibrates uncertainty over the full feasible class, including assortments that are far from optimal and irrelevant for deciding whether the best null assortment can compete with the best alternative assortment. Moreover, confidence-set inversion gives only a binary decision, whereas a p-value provides a graded measure of evidence.

Our approach uses the max-difference structure directly. Under a unique optimizer, the event \(\cS_T^*\in\bcS_0\) is equivalent to
\[
\max_{\cS\in\bcS_0}
R(\cS\mid\bbeta^*,\bv_T,\br_T)
-
\max_{\cS\notin\bcS_0}
R(\cS\mid\bbeta^*,\bv_T,\br_T)
\ge 0 .
\]
Thus the combinatorial testing problem reduces to inference on the sign of a nonsmooth max-difference functional, avoiding construction of a confidence set for the full optimizer.

To estimate this max-difference functional, we learn \(\bbeta^*\) from the adaptively collected choice data using an \(\ell_1\)-penalized local likelihood procedure. Let \(\cI\subseteq[p]\) denote the selected support, with \(\hat s=|\cI|\). On this support, we construct a debiased estimator and quantify its local uncertainty through random directional perturbations. Specifically, we draw \(\zeta_1,\ldots,\zeta_m\) independently and uniformly from the unit sphere in \(\RR^{\hat s}\). For a perturbation radius \(a\ge0\), the perturbed max-difference along direction \(\zeta_i\) is
\[
\max_{\cS\in\bcS_0}
\left\{
R(\cS\mid\tilde\bbeta^d,\bv_T,\br_T)
+
a\,\hat g_{\cS}^{\top}\widehat\bTheta^{1/2}\zeta_i
\right\}
-
\max_{\cS\notin\bcS_0}
\left\{
R(\cS\mid\tilde\bbeta^d,\bv_T,\br_T)
+
a\,\hat g_{\cS}^{\top}\widehat\bTheta^{1/2}\zeta_i
\right\},
\]
where \(\tilde\bbeta^d\) is the debiased estimator, \(\hat g_{\cS}\) is the estimated revenue gradient restricted to the selected support, and \(\widehat\bTheta\) is the estimated inverse information matrix on that support. When the plug-in max-difference is negative, the observed evidence favors the alternative. We increase \(a\) until the perturbed max-difference becomes compatible with the null boundary, up to tolerance \(\kappa\), along at least one sampled direction. The resulting minimal perturbation radius, denoted by \(\cU_T\), forms the basis of our test statistic.

Using the minimal perturbation radius, we define the p-value as
\[
p_m
:=
\bigl(\bar F_{\chi^2_{\hat{s}}}(\cU_T^2)+\delta_m\bigr)\wedge1,
\]
where \(\bar F_{\chi^2_{\hat{s}}}\) denotes the upper-tail probability of a \(\chi^2_{\hat{s}}\) random variable, and \(\delta_m\) accounts for the directional uncertainty induced by the finite random-direction approximation. The radial magnitude is calibrated through the \(\chi^2_{\hat{s}}\) distribution, while the angular component is controlled by the random directional discretization. This separates magnitude and directional uncertainty and focuses calibration on the max-difference boundary rather than on a uniform error bound over the full candidate class.

The max-difference perturbation principle is not specific to contextual MNL. More generally, consider a combinatorial class \(\cC\) with objective values \(\{\mu_{\cS}^*\}_{\cS\in\cC}\), an optimizer 
\[
    \cS^*\in\arg\max_{\cS\in\cC}\mu_{\cS}^*,
\]
and a structural null class \(\bcS_0\subsetneq\cC\). The testing problem \(\cS^*\in\bcS_0\) is governed by the sign of
\[
    \max_{\cS\in\bcS_0}\mu_{\cS}^*
    -
    \max_{\cS\in\cC\setminus\bcS_0}\mu_{\cS}^* .
\]
The perturbation approach applies whenever the learned score surface admits a local expansion
\[
    \hat\mu_{\cS}
    =
    \mu_{\cS}^*
    +
    \nu_{\cS}^{\top}(\hat\thetab-\thetab^*)
    +
    r_{\cS},
    \qquad \cS\in\cC,
\]
with the higher-order remainders \(r_{\cS}\) controlled over relevant near-maximizers and with \(\hat\thetab-\thetab^*\) admitting a Gaussian coupling after normalization or debiasing. The contextual MNL model studied here is a nontrivial instance in which this expansion must be justified under adaptive data collection, high-dimensional sparsity, and a nonlinear combinatorial revenue surface.

Our contributions are as follows. First, we formulate post-learning combinatorial inference as inference on the sign of a nonsmooth max-difference functional. In the contextual MNL assortment model, this yields a general p-value procedure for testing whether the oracle terminal optimizer satisfies an arbitrary discrete structural property. The data collection policy is based on online \(\ell_1\)-penalized local likelihood estimation, and we establish a sublinear regret guarantee showing that terminal inference can be embedded in online learning without relying on a purely exploratory phase.

Second, we develop theoretical tools for high-dimensional inference after adaptive selection of both the data and the optimizer. We derive convergence rates and effective support recovery guarantees for the online penalized estimator, together with a debiased expansion on the selected support. The analysis combines Gaussian smoothing, martingale arguments for adaptively collected choice data, and anti-concentration for differences of maxima \citep{belloni2024anticon}. These ingredients allow us to control the nonsmooth max-difference revenue functional that determines the structural hypothesis.

Third, we establish the validity and power of the proposed p-value. The validity proof relies on a decoupling of directional and magnitude uncertainty, together with martingale Gaussian coupling techniques \citep{maias2025mtgcoupling}. The power analysis exploits localization: instead of controlling revenue errors uniformly over all candidate assortments, it restricts attention to assortments that can plausibly maximize the null or alternative revenue surface near the boundary. This yields a weaker signal-strength requirement than uniform-error calibration, especially when the effective support size is small relative to the logarithmic size of the full candidate class. Numerical experiments confirm that the proposed p-value substantially improves power over confidence-set procedures based on maximal uniform revenue error.

\subsection{Related literature}

Our work is connected to several strands of literature. The first is high-dimensional post-regularization inference and debiased estimation \citep{ZhangZhang2014,JavanmardMontanari2014,vandeGeer2014}. In that literature, the target is typically a low-dimensional coordinate or a smooth functional of an unknown parameter. Here the target is selected through a nonsmooth combinatorial optimization map, and the hypothesis is governed by a max-difference functional.

The second strand is high-dimensional Gaussian approximation and bootstrap theory for maximal statistics. The works \citep{cck2013aos,confidencebands2014aos,CHERNOZHUKOV20163632,cck2017aop,cck2022improvedbootstrap,chernozhukov2019inference} develop tools for approximating maxima or suprema of empirical processes and for testing many moment inequalities. These tools underlie many uniform-error approaches to high-dimensional and combinatorial inference. Our statistic differs because the null is expressed through a difference of two maximization operators rather than a single coordinatewise maximum. The resulting functional is nonconvex and requires anti-concentration tools for differences of maxima \citep{belloni2024anticon}.

The third strand is inference for discrete optimizers. The closest assortment-specific work is \cite{shen2023combinfassort}, which studies offline inference for the uncapacitated MNL model and exploits revenue ordering to reduce the problem to sign changes in revenue gaps. More broadly, \cite{zhang2025winnersconfidencediscreteargmin} construct confidence sets for the argmin index set of a noisy high-dimensional vector using sample splitting and a soft-min device. Their framework is developed for an IID sampling model, with sample splitting used to remove dependence created by competitor selection. Our setting is sequential and adaptive, and the optimizer is a context-dependent combinatorial assortment rather than the argmin of a fixed mean vector.

Our problem is also connected to inference after data-driven selection. \cite{AndrewsKitagawaMcCloskey2024} study inference on winners, accounting for the selection bias created by choosing the empirically best candidate among finitely many alternatives. Post-selection and selective inference methods provide validity after model selection through simultaneous guarantees over selectable models \citep{BerkBrownBujaZhangZhao2013} or by conditioning on the realized selection event, as in exact selective inference for the lasso \citep{LeeSunSunTaylor2016}. Our setting differs because the selected object is a context-dependent combinatorial optimizer, the data are collected adaptively, and uncertainty is calibrated through directional perturbations of a nonsmooth max-difference revenue functional rather than by conditioning on a selection event.

The fourth strand is inference with adaptively collected data. \cite{zhang2022statisticalinferenceadaptivesampling} develop post-adaptive-sampling inference for longitudinal data through a Z-estimation framework, while \cite{chen2022onlinestatisticalinferencecontextual} study online inference for contextual bandit parameters via weighted stochastic gradient descent. In these works, the target is a model parameter or a low-dimensional functional under an adaptive design. Here the target is a property of a nonsmooth combinatorial optimizer. The adaptive policy induces both temporal dependence through sequential data collection and combinatorial dependence through repeated maximization over a large feasible class.

Finally, the contextual MNL model connects the paper to online assortment optimization, contextual choice modeling, and discrete-choice inference. Existing work has developed regret-minimizing policies for MNL assortment optimization under capacity or structural constraints \citep{Caro2007,Rusmevichientong2010,Saure2013,Agrawal2017,ChenWang2018,Agrawal2019,Chen2020context}, as well as robust and contextual extensions \citep{chen2019robust,BesbesZeevi2015,Golrezaei2014,Cheung2017,ChenMaSimchiLeviXin2024,ChenOwenPixtonSimchiLevi2022}. Spectral, likelihood-based, and regularized methods have also been studied for the Bradley--Terry--Luce model and related ranking models, often with sharp estimation or minimax guarantees \citep{NegahbanOhShah2017,ShahBalakrishnanWainwright2015,ChenFanMaWang2019,NegahbanOhThekumparampilXu2018}, and recent work develops uncertainty quantification for ranking scores \citep{GaoShenZhang2023,liu2023lagrangian}. These papers focus primarily on learning policies, latent parameters, or item-level scores; our focus is inference on a structural property of the optimizer after adaptive learning.

\subsection*{Notation}
For a positive integer \(n\), write \([n]=\{1,\ldots,n\}\). For a subset \(\cS\subseteq[n]\), write \(\cS_+=\cS\cup\{0\}\). For an index set \(\cI\subseteq[p]\), \([x]_{\cI}\) denotes the subvector of \(x\) indexed by \(\cI\), and \([A]_{\cI_1,\cI_2}\) denotes the submatrix of \(A\) with rows in \(\cI_1\) and columns in \(\cI_2\). For a vector \(x\), \(\|x\|_q\) denotes the vector \(\ell_q\) norm, with \(\|x\|_\infty=\max_j |x_j|\), and \(\|x\|_0\) denotes the number of nonzero coordinates. For a matrix \(A=(A_{ij})\), \(\|A\|_{\max}:=\max_{i,j}|A_{ij}|\) denotes the entrywise maximum norm, \(\|A\|_\infty:=\max_i\sum_j |A_{ij}|\) denotes the induced \(\ell_\infty\) operator norm, \(\|A\|_2\) denotes the spectral norm, and \(\lambda_{\min}(A)\) denotes the smallest eigenvalue of a symmetric matrix. \(\II(\cdot)\) denotes the indicator function. We write \(a_n\lesssim b_n\) if \(a_n\le Cb_n\) for an absolute constant \(C>0\), \(a_n\asymp b_n\) if both \(a_n\lesssim b_n\) and \(b_n\lesssim a_n\) hold, and \(a_n\ll b_n\) if \(a_n/b_n\to0\).

\subsection*{Paper Organization}
The remainder of the paper is organized as follows. Section~\ref{sec:setup} introduces the model and hypothesis testing framework. Section~\ref{sec:method} develops the estimation, debiasing, and testing procedure. Section~\ref{sec:theory} presents the main theoretical results. Section~\ref{sec:numerical} reports simulation studies, followed by concluding remarks.
\section{Problem Setup}\label{sec:setup}
Consider $n$ products indexed by $[n]=\{1,\ldots,n\}$, with $0$ denoting the no-purchase option. At each time $t \in [T]$, we observe item-specific contextual features $\bv_t := \{\bv_{tj}\}_{j \in [n]} \in \RR^{n \times p}$ and revenues $\br_t := \{r_{tj}\}_{j \in [n]} \in \mathbb{R}^n$. For normalization, we set $\bv_{t0}=\mathbf{0}$ for all $t \in [T]$. Based on the past customer choice outcomes and the current contextual information $(\bv_t,\br_t)$, we select an assortment $\cS_t \subseteq [n]$ according to a data-driven policy and offer it to the incoming customer. The resulting choice outcome is denoted by $i_t \in \cS_t \cup \{0\}$.

We adopt a contextual MNL model, under which the choice outcome follows a multinomial distribution:
\begin{equation}
\label{eq:model}
\PP_{\bbeta^*,\bv_t}(i_t = j \mid \cS_t)
=
\frac{u_{tj}^*}{1+\sum_{j' \in \cS_t} u_{tj'}^*}
=
\frac{\exp\{\bv_{tj}^{\top}\bbeta^*\}}{1+\sum_{j' \in \cS_t}\exp\{\bv_{tj'}^{\top}\bbeta^*\}}, \quad \forall j \in \cS_t \cup \{0\},
\end{equation}
where $\bbeta^* \in \mathbb{R}^p$ is the unknown parameter and
$$
u_{tj}^* = \exp\{\bv_{tj}^{\top}\bbeta^*\}
$$
is the utility weight of item $j$ at time $t \in [T]$.
We work in a high-dimensional regime that allows \(p\gg T\), and assume that \(\bbeta^*\) is sparse, with support size \(s=\|\bbeta^*\|_0\ll p\).

To reflect practical constraints such as limited display or window size, we restrict the feasible assortments to
\[
\bcS^K \subseteq \{\cS \subseteq [n] : 1 \le |\cS| \le K\},
\]
where \(K\) is the maximum cardinality, and we assume without loss of generality that \(\max_{\cS \in \bcS^K} |\cS| = K\).
Under \eqref{eq:model}, for any offered assortment $\cS \in \bcS^K$, the expected revenue at time $t \in [T]$, evaluated at $\bbeta \in \RR^p$ and conditional on the contextual information $(\bv_t,\br_t)$, is
\begin{equation}\label{eq: exp rev}
R(\cS \mid \bbeta, \bv_t, \br_t)
:=
\EE_{\bbeta}\bigl(r_{t,i_t} \mid \cS, \bv_t, \br_t\bigr)
=
\frac{\sum_{j \in \cS} r_{tj}\exp\{\bv_{tj}^{\top}\bbeta\}}{1+\sum_{j \in \cS}\exp\{\bv_{tj}^{\top}\bbeta\}},
\end{equation}
where
$$
r_{t,i_t}
=
\sum_{j \in \cS_t \cup \{0\}} r_{tj}\,\II(i_t=j)
$$
denotes the realized revenue at time $t$, with $r_{t0}=0$ corresponding to the no-purchase option.

We are interested in the assortments that maximize expected revenue, allowing for the possibility of ties among maximizers. For any $\bbeta \in \mathbb{R}^p$, let
\begin{equation}
\cS_t(\bbeta)
:=
\argmax_{\cS \in \bcS^K} R(\cS \mid \bbeta, \bv_t, \br_t)
\end{equation}
denote the set of revenue-maximizing assortments at time $t$. Note that $\cS_t(\bbeta)$ depends on the contextual information $(\bv_t,\br_t)$. When the maximizer is unique, so that $\cS_t(\bbeta)$ is a singleton, we abuse notation slightly and use $\cS_t(\bbeta)$ to denote the unique maximizing assortment. We write $\cS_t^* := \cS_t(\bbeta^*)$ for the corresponding maximizing set under the true parameter.

Our inferential target is the maximizing set at the terminal time point $T$. Specifically, conditional on the contextual information of the incoming customer at time $T$, we consider the general hypothesis test
\begin{equation}\label{eq: hypo test}
H_0:\ \cS_T^* \cap \bcS_0 \neq \emptyset
\qquad \text{versus} \qquad
H_1:\ \cS_T^* \cap \bcS_0 = \emptyset,
\quad \text{given } (\bv_T,\br_T),
\end{equation}
where $\bcS_0 \subseteq \bcS^K$ denotes the class of assortments satisfying the structural constraint of interest. In other words, given the information on the incoming customer at time $T$, we ask whether the structural property encoded by $\bcS_0$ can be achieved without sacrificing optimal revenue, or equivalently, whether it has the potential to be revenue-optimal.

The product inclusion test in Section~\ref{sec:intro} provides one concrete instance of the general testing problem in \eqref{eq: hypo test} when the optimal assortment is unique. We restate it below in the more general setting that allows for ties, and then present several additional formulations to illustrate possible forms of $\bcS_0$.

\begin{example}[Product inclusion test]\label{exm: subset test}
Let $\cA \subseteq [n]$ denote a designated set of products. Conditional on $(\bv_T,\br_T)$, we test whether there exists a revenue-maximizing assortment at time $T$ that includes all products in $\cA$:
$$
H_0:\ \exists \cS \in \cS_T^* \text{ such that } \cA \subseteq \cS
\qquad\text{versus}\qquad
H_1:\ \forall \cS \in \cS_T^*,\ \cA \not\subseteq \cS.
$$
This corresponds to testing $\cS_T^* \cap \bcS_0 \neq \emptyset$, where
$$
\bcS_0 := \{\cS \in \bcS^K : \cA \subseteq \cS\}.
$$
\end{example}
The following examples provide additional formulations of \eqref{eq: hypo test}.
\begin{example}[Category proportion test]\label{exm: category prop test}
Let $\cA \subseteq [n]$ denote a given product category, and let $q\%$ be a prescribed threshold. Conditional on $(\bv_T,\br_T)$, we test whether there exists a revenue-maximizing assortment at time $T$ for which more than $q\%$ of the offered products belong to $\cA$:
$$
H_0:\ \exists \cS \in \cS_T^* \text{ such that } \frac{|\cA \cap \cS|}{|\cS|} > q\%
\qquad \text{versus} \qquad
H_1:\ \forall \cS \in \cS_T^*,\ \frac{|\cA \cap \cS|}{|\cS|} \le q\%.
$$
This corresponds to testing $\cS_T^* \cap \bcS_0 \neq \emptyset$, where
$$
\bcS_0 := \left\{\cS \in \bcS^K : \frac{|\cA \cap \cS|}{|\cS|} > q\%\right\}.
$$
This formulation is relevant when assortment decisions are guided by category balance, variety-depth targets, or shelf-space allocation across product families \citep{Mantrala2009,Kok2015}.
\end{example}

\begin{example}[Feature test]\label{exm: feature test}
Let $\cV \subseteq \RR^{p+1}$ denote a prescribed set of item-level feature vectors. Conditional on $(\bv_T,\br_T)$, we test whether there exists a revenue-maximizing assortment at time $T$ whose products all have augmented feature vectors lying in $\cV$:
\begin{align*}
H_0:&\ \exists \cS \in \cS_T^* \text{ such that } (\bv_{Tj}^{\top}, r_{Tj})^{\top} \in \cV \quad \text{for all } j \in \cS,\\
H_1:&\ \forall \cS \in \cS_T^*,\ \text{there exists } j \in \cS \text{ such that } (\bv_{Tj}^{\top}, r_{Tj})^{\top} \notin \cV.
\end{align*}
This corresponds to testing $\cS_T^* \cap \bcS_0 \neq \emptyset$, where
$$
\bcS_0 := \{\cS \in \bcS^K : (\bv_{Tj}^{\top}, r_{Tj})^{\top} \in \cV \text{ for all } j \in \cS\}.
$$
This formulation is relevant when decision makers require all items in a candidate optimal assortment to satisfy attribute screens such as price range, freshness, compatibility, or merchandising profile \citep{Golrezaei2014,Chen2020context}.
\end{example}

As illustrated by the concrete formulations in Examples~\ref{exm: subset test}--\ref{exm: feature test}, the constraint class $\bcS_0$ is combinatorial. A direct approach based on constructing a confidence set for the maximizing set $\cS_T^*$ is therefore both technically challenging and statistically inefficient. Indeed, since $\cS_T^*$ is a set-valued argmax that may contain multiple tied maximizers, valid confidence-set construction would require uniform error control over a potentially very large class of candidate assortments. Such an approach is typically driven by a maximal error statistic over $\bcS^K$, which is inherently conservative, and it does not naturally yield a scalar p-value for the structural hypothesis of interest. 

Instead, we exploit the equivalence of \eqref{eq: hypo test} to the following revenue-gap formulation:
\begin{equation}\label{eq: equiv test}
H_0:\quad
\max_{\cS\in\bcS_0} R_{T,\cS}^*-\max_{\cS\notin\bcS_0} R_{T,\cS}^* \ge 0
\qquad\text{versus}\qquad
H_1:\quad
\max_{\cS\in\bcS_0} R_{T,\cS}^*-\max_{\cS\notin\bcS_0} R_{T,\cS}^* < 0,
\end{equation}
given $(\bv_T,\br_T)$, where $R_{T,\cS}^* := R(\cS \mid \bbeta^*, \bv_T, \br_T)$. Thus, \eqref{eq: equiv test} reduces inference on a combinatorial structural constraint to inference on the sign of a scalar max-difference in revenues. However, this max-difference remains a nonconvex and nonsmooth functional of the latent revenue surface. The technical tools we develop to address this inferential problem are introduced in the next section.
\begin{remark}
The formulation in \eqref{eq: hypo test} focuses on whether a decision maker can impose the structural constraint encoded by $\bcS_0$ without sacrificing optimal revenue. In some applications, however, the relevant practical question is stronger: whether every optimal assortment must satisfy the structural property. In that case, the hypothesis is reformulated as
$$
H_0:\ \cS_T^* \subseteq \bcS_0
\qquad \text{versus} \qquad
H_1:\ \cS_T^* \not\subseteq \bcS_0,
\quad \text{given } (\bv_T,\br_T).
$$
This null hypothesis is stronger than \eqref{eq: hypo test}. Indeed, it is equivalent to
$$
H_0:\quad
\max_{\cS\in\bcS_0} R_{T,\cS}^*-\max_{\cS\notin\bcS_0} R_{T,\cS}^* > 0
\qquad\text{versus}\qquad
H_1:\quad
\max_{\cS\in\bcS_0} R_{T,\cS}^*-\max_{\cS\notin\bcS_0} R_{T,\cS}^* \le 0.
$$
 With suitable technical modifications, the proposed procedure can also be adapted to this stronger testing problem. In the present paper, however, we focus on the weaker formulation in \eqref{eq: hypo test}, which is more directly aligned with the question of whether the structural constraint is compatible with revenue optimality.
\end{remark}

\section{Method}\label{sec:method}

We begin by introducing the dynamic policy used to adaptively select assortments and update the coefficient estimator, and then present the post-learning inferential procedure.

\subsection{Dynamic Policy}\label{sec: policy}
We first assume that a pilot estimator $\hat\bbeta_0$ is available, obtained either from prior online exploration \cite{Chen2020context} or from offline historical data \cite{shen2023combinfassort}, such that
\begin{equation}\label{eq: beta 0 rate}
\|\hat\bbeta_0-\bbeta^*\|_1 \le \tau,
\end{equation}
where $\tau=o(1)$ is assumed known. Unless stated otherwise, all subsequent results are understood to hold on the event \eqref{eq: beta 0 rate}. The pilot estimator plays the role of an initial exploration device, anchoring subsequent exploitation within a suitable neighborhood of the true parameter and thereby mitigating the effect of the combinatorial discontinuity induced by adaptive assortment selection.

With the aid of $\hat\bbeta_0$, at each time point $t \in [T]$ we update the estimator of $\bbeta^*$ through a local maximum likelihood procedure centered near $\hat\bbeta_0$. Specifically, define the negative log-likelihood based on observations up to time $t-1$ by
\[
\ell_{t-1}(\bbeta)
=
-\sum_{t'=1}^{t-1} \log \PP_{\bbeta,\bv_{t'}}\!\big(i_{t'} \mid \cS_{t'}\big)
=
-\sum_{t'=1}^{t-1} \log \left\{
\frac{\exp\{\bv_{t',\,i_{t'}}^{\top}\bbeta\}}{1+\sum_{k\in \cS_{t'}} \exp\{\bv_{t'k}^{\top}\bbeta\}}
\right\},
\]
where $\bv_{t',\,i_{t'}}
:=
\sum_{j\in \cS_{t'}\cup\{0\}} \bv_{t'j}\,\II(j=i_{t'})$ denotes the contextual feature vector of the chosen item.
The gradient and Hessian of $\ell_{t-1}(\bbeta)$ will be used later in the inferential procedure:
\begin{align}
\nabla_{\bbeta}\ell_{t-1}(\bbeta)
&=
-\sum_{t'=1}^{t-1}
\Big\{
\bv_{t',i_{t'}}
-
\EE_{\bbeta,t',\cS_{t'}(\hat\bbeta_{t'-1})}\big(\bv_{t',i_{t'}}\big)
\Big\},
\label{eq: grad}
\\
\nabla_{\bbeta}^2\ell_{t-1}(\bbeta)
&=
\sum_{t'=1}^{t-1}
\Big[
\EE_{\bbeta,t',\cS_{t'}(\hat\bbeta_{t'-1})}
\big(\bv_{t',i_{t'}}\bv_{t',i_{t'}}^\top\big)
\notag\\
&\qquad\qquad
-
\EE_{\bbeta,t',\cS_{t'}(\hat\bbeta_{t'-1})}
\big(\bv_{t',i_{t'}}\big)\,
\EE_{\bbeta,t',\cS_{t'}(\hat\bbeta_{t'-1})}
\big(\bv_{t',i_{t'}}\big)^\top
\Big],
\label{eq: hessian}
\end{align}
where $\EE_{\bbeta,t',\cS_{t'}(\bbeta')}(\cdot)$ denotes expectation with respect to the item draw, conditional on the contextual features $\bv_{t'}$ and the assortment $\cS_{t'}(\bbeta')$ selected under parameter $\bbeta'$:
\[
i_{t'} \sim \PP_{\bbeta,\bv_{t'}}(\cdot \mid \cS_{t'}(\bbeta')).
\]

Then, for each $t \ge 2$, we compute the $\ell_1$-penalized estimator
\begin{equation}\label{eq: theta hat}
\hat\bbeta_{t-1}
\in
\arg\min_{\|\bbeta-\hat\bbeta_0\|_1 \le 2\tau}
\Big\{\ell_{t-1}(\bbeta)+\lambda_{t-1}\|\bbeta\|_1\Big\},
\end{equation}
where $\lambda_{t-1}>0$ will be specified in Theorem~\ref{thm:lasso-rates}. This local maximum likelihood estimator updates the estimate of $\bbeta^*$ online as data are collected sequentially, while the $\ell_1$ penalty performs variable selection for the relevant contextual covariates.

Given $\hat\bbeta_{t-1}$, we select the offered assortment at time $t$ by maximizing the revenue evaluated at $\hat\bbeta_{t-1}$, namely,
$$
\cS_t \in \cS_t(\hat\bbeta_{t-1}).
$$
Any ties are resolved by a deterministic rule, such as by favoring assortments with larger item revenues. We denote by $\cS_t^* \in \cS_t(\bbeta^*)$ an oracle optimal assortment at time $t$ under the true coefficient vector $\bbeta^*$. In Section~\ref{sec:theory}, we show that this policy yields consistent estimation and effective support recovery for $\bbeta^*$, and in turn attains near-optimal regret.

\subsection{Inferential Procedure}\label{sec: inf proc}
As established in Section~\ref{sec:setup}, the general inferential objective is equivalent to testing whether the max revenue gap in \eqref{eq: equiv test} is nonnegative. Although this reformulation reduces the problem to the sign of a scalar functional, the resulting inference problem remains challenging in the post-online-learning setting. The difficulty is that the max revenue gap is still a nonconvex and nonsmooth functional of the unknown coefficient vector, since it is defined through the difference of two maximization operators over combinatorial classes of assortments. In particular, the active maximizing assortments are unknown and may change discontinuously under small perturbations of the parameter, especially near ties. Moreover, the data used for inference are collected adaptively through the online assortment policy, so the estimation error is coupled with both temporal dependence and the combinatorial dependence induced by repeated maximization. As a result, standard smooth-function arguments such as the delta method or Wald-type inference are not applicable, and a more careful post-learning procedure is required.  

To address this difficulty, we exploit the latent sparsity of the true coefficient vector and decompose the uncertainty in the estimator on the selected support into directional and magnitude components. Specifically, let $\hat\bbeta_{T-1}$ denote the estimator in \eqref{eq: theta hat}, and define the selected support by
\[
\cI := \{ j\in[p]:\ \hat\bbeta_{T-1,j}\neq 0\},
\]
with support size $\hat{s}:=|\cI|$. Since the penalization induces bias, we first debias the estimator on the selected support to facilitate subsequent valid inference. To this end, define the one-step debiased estimator $\tilde\bbeta^d$ by setting $\tilde\bbeta^d_j=0$ for $j\notin\cI$ and
\[
[\tilde\bbeta^{d}]_{\cI}
:=
[\hat\bbeta_{T-1}]_{\cI}
-
\Big([\nabla_{\bbeta}^2\ell_{T-1}(\hat\bbeta_{T-1})]_{\cI,\cI}\Big)^{-1}
\Big([\nabla_{\bbeta}\ell_{T-1}(\hat\bbeta_{T-1})]_{\cI}\Big).
\]

We test \eqref{eq: equiv test} by constructing a p-value based on a perturbation of the plug-in revenue-gap statistic evaluated at $\tilde\bbeta^d$. For each $t \in [T]$ and $\cS \in \bcS^K$, the gradient of the expected revenue of assortment $\cS$ with respect to $\bbeta$ is
\begin{equation}\label{eq: gradient revenue}
\nabla_{\bbeta} R(\cS \mid \bbeta,\bv_t,\br_t)
=
\sum_{j\in\cS}
\PP_{\bbeta,\bv_t}(j \mid \cS)\, r_{tj}\,\bv_{tj}
-
\sum_{j,j'\in\cS}
\PP_{\bbeta,\bv_t}(j \mid \cS)\PP_{\bbeta,\bv_t}(j' \mid \cS)\, r_{tj}\,\bv_{tj'},
\end{equation}
where $\PP_{\bbeta,\bv_t}(j \mid \cS)
:=
\frac{\exp(\bv_{tj}^\top\bbeta)}{1+\sum_{j'\in\cS}\exp(\bv_{tj'}^\top\bbeta)}$.

Given a directional accuracy parameter $\epsilon \in (0,1)$, fix $m \in \NN^+$. Let $\zeta_1,\ldots,\zeta_m$ be i.i.d.\ random vectors drawn uniformly from the unit sphere $S^{\hat{s}-1}$. We define the minimal perturbation radius needed to bring the max-difference statistic close to the null boundary along the sampled directions by
\begin{equation}\label{eq: inf lambda}
\begin{aligned}
\cU_T
:=
\inf\Bigg\{a\ge 0:\ 
&\max_{1\le i\le m}
\Big[
\max_{\cS\in\bcS_0}
\big(\hat R_{T,\cS}+a\,\hat g_{\cS}^\top \widehat\bTheta^{1/2}\zeta_i\big)\\
&\qquad\qquad
-
\max_{\cS\notin\bcS_0}
\big(\hat R_{T,\cS}+a\,\hat g_{\cS}^\top \widehat\bTheta^{1/2}\zeta_i\big)
\Big]
\ge -\kappa
\Bigg\},
\end{aligned}
\end{equation}
where $\kappa \ge 0$ is a tuning parameter whose choice will be specified in Theorem~\ref{thm: valid p}. Here
\begin{equation}\label{eq: rev and grad}
    \hat R_{T,\cS}:=R(\cS\mid \tilde\bbeta^{\,d},\bv_T,\br_T),
\qquad
\hat g_{\cS}:=[\nabla_{\bbeta}R(\cS\mid \hat\bbeta_{T-1},\bv_T,\br_T)]_{\cI},
\end{equation}
and
\begin{equation}\label{eq: est fisher mat}
    \widehat\bTheta
:=
\Big([\nabla_{\bbeta}^2\ell_{T-1}(\hat\bbeta_{T-1})]_{\cI,\cI}\Big)^{-1}.
\end{equation}
Thus, $\cU_T$ records the smallest perturbation magnitude for which the plug-in max-difference becomes approximately nonnegative along at least one of the sampled directions.

Based on the minimal perturbation radius $\cU_T$, we define the p-value by
\begin{equation}\label{eq: p value}
p_m
:=
\bigl(\bar{F}_{\chi^2_{\hat{s}}}(\cU_T^2)+\delta_m\bigr)\wedge 1,
\end{equation}
where $\bar{F}_{\chi^2_{\hat{s}}}(\cdot)$ denotes the upper-tail probability of a chi-square random variable with $\hat{s}$ degrees of freedom, and
\begin{equation}\label{eq: delta_m}
    \delta_m
:=
\exp\left(
-m\sqrt{\frac{\pi}{8 \hat{s}}}
\left(\frac{2\epsilon}{\pi}\right)^{\hat{s}-1}
\right)
\end{equation}
is the residual probability that the $m$ sampled directions fail to approximate the true perturbation direction within accuracy $\epsilon$. We will show in Theorem~\ref{thm: valid p} that $p_m$ is asymptotically valid.

Based on the constructed p-value, for a given significance level $\alpha$, we reject $H_0$ if and only if $p_m \le \alpha$.


\section{Theory}\label{sec:theory}

This section establishes theoretical guarantees for the proposed inferential framework. We begin with uniform convergence rates for the penalized estimator, followed by effective support recovery guarantees.
\subsection{Rates of the penalized estimator}

We impose the following assumption on the data-generating mechanism for $\{\bv_t,\br_t\}_{t=1}^T$ to facilitate the theoretical analysis of the temporal and combinatorial dependence induced by the adaptive data collection process.

\begin{assumption}\label{asp: abs cont of r}
The sequence $\{\bv_t,\br_t\}_{t=1}^T$ is i.i.d. across $t \in [T]$. For each $t \in [T]$, $\bv_t \perp\!\!\!\perp \br_t$.
\end{assumption}

Before introducing the remaining technical assumptions, we define some notation used throughout the analysis. For each $t \in [T]$, let
\begin{equation*}
\cH_{t-1}
:=
\big\{\bv_{t'},\br_{t'},i_{t'}\big\}_{t'=1}^{t-1}
\cup \{\hat\bbeta_0\}
\end{equation*}
denote the history available up to time $t-1$, with $\cH_0=\{\hat\bbeta_0\}$. Throughout the remainder of the paper, all probabilistic statements are understood to hold conditionally on $\cH_0$, unless stated otherwise.

For later use, define the conditional covariance matrix
\begin{equation}\label{eq: def Sig_t}
\bSigma_{t'}(\bbeta)
=
\EE_{\bbeta^*,t',\cS_{t'}(\bbeta)}
\big(
\bv_{t',i_{t'}}\bv_{t',i_{t'}}^\top
\big)
-
\EE_{\bbeta^*,t',\cS_{t'}(\bbeta)}
\big(
\bv_{t',i_{t'}}
\big)
\EE_{\bbeta^*,t',\cS_{t'}(\bbeta)}
\big(
\bv_{t',i_{t'}}
\big)^\top .
\end{equation}
This matrix is the covariance counterpart of the Hessian summand in \eqref{eq: hessian}, with the item draw evaluated under the true parameter $\bbeta^*$ and the offered assortment selected according to the plug-in parameter $\bbeta$. It therefore captures the local Hessian structure induced by the adaptive assortment-selection rule.

For a fixed plug-in value $\bbeta$, the conditional mean $\EE\big(\bSigma_{t'}(\bbeta)\mid \bbeta\big)$ averages over the randomness in the contextual information and revenues $(\bv_{t'},\br_{t'})$, while keeping the parameter used for assortment selection fixed at $\bbeta$. We further define the population Hessian matrix at the truth by
\[
\bSigma^*
=
\EE\big(\bSigma_{t'}(\bbeta^*)\mid \bbeta^*\big).
\]
Under Assumption~\ref{asp: abs cont of r}, this matrix does not depend on
\(t'\in[T]\). The following assumption ensures that \(\bSigma^*\) is
nonsingular, that the contextual features are uniformly bounded, and that the
MNL choice probabilities are locally comparable.

\begin{assumption}\label{asp: cov}
There exist constants \(\underline\lambda>0\), \(\nu>0\), and
\(\rho\ge1\) such that
\[
    \lambda_{\min}(\bSigma^*)\ge \underline\lambda,
    \qquad
    \max_{j\in[n],\,t\in[T]}\|\bv_{tj}\|_{\infty}\le \nu .
\]
Moreover, for all \(t\in[T]\), all \(\cS\in\bcS^K\), all
\(\bbeta\in\cB_1(\bbeta^*,3\tau)\), and all \(j,j'\in\cS_+\),
\begin{equation}\label{eq: rho rate}
    {
        \PP_{\bbeta,\bv_t}(j\mid\cS)
    /
        \PP_{\bbeta,\bv_t}(j'\mid\cS)
    }
    \le \rho .
\end{equation}
\end{assumption}

The following assumptions formalize the conditional independence structure of the adaptively collected choice outcomes and impose regularity conditions on the revenue distribution and candidate-assortment class. \begin{assumption}\label{asp: dpd structure}
For each $t\in[T]$, conditional on the current contextual information, revenues, and selected offer set, the customer choice outcome $i_t$ is independent of the past history; that is,
\[
i_t \perp\!\!\!\perp \mathcal H_{t-1}
\mid \bv_t,\br_t,\mathcal S_t .
\]
\end{assumption}
\begin{assumption} \label{asp: assortment const}
 Suppose that the revenue vectors $\{\br_t\}_{t=1}^T$ are i.i.d. across $t$, with $\br_t=(r_{t1},\ldots,r_{tn})^\top\sim N(\bmu_r,\sigma_r^2 \Ib_n)$, and denote $\bar\mu := \|\bmu_r\|_{\infty} \vee \sigma_r  \sqrt{2(\log 2n + 2\log  T)}$.
 Moreover, at least one of the following two conditions is satisfied:
    \begin{enumerate}
        \item $\bcS^K = \{\cS: \cS \subseteq [n], |\cS| = K\}$ and $\rho \le 2$.
        \item $\bcS^K \subseteq \{\cS: \cS \subseteq [n], |\cS| = K\}$ and for any $\cS, \cS' \in \bcS^K$, $|\cS \cap \cS'| \le (K /\rho^2 - 1)\vee 0$.
    \end{enumerate}
    Let $C_n = \bar\mu  K^3 \nu \sigma_r^{-1}  \sqrt{\log n}$ if (1) is satisfied, and $C_n = K \bar\mu   \nu \sigma_r^{-1} \sqrt{\log n}(K/\rho^2 \vee 1)$ if (2) is satisfied.
    \end{assumption}

\begin{remark}[Role of Assumption~\ref{asp: assortment const} and possible relaxations]
Assumption~\ref{asp: assortment const} is a primitive sufficient condition for controlling the dependence induced by adaptive assortment selection. It has two roles. First, it provides a high-probability revenue envelope, uniformly over \(t\in[T]\) and \(j\in[n]\). This part may be replaced by standard sub-Gaussian or bounded-moment tail conditions. Second, it gives local stability of the selection-induced Hessian. More precisely, the proofs only require a local continuity bound for the one-period conditional expected Hessian summand: for all \(\bbeta_1,\bbeta_2\in B_1(\bbeta^*,3\tau)\),
\[
    \left\|
        \mathbb E\{\bSigma_t(\bbeta_1)\mid \bbeta_1\}
        -
        \mathbb E\{\bSigma_t(\bbeta_2)\mid \bbeta_2\}
    \right\|_{\max}
    \le
    \mathfrak L_T\|\bbeta_1-\bbeta_2\|_1+\mathfrak r_T,
\]
where \(\mathfrak r_T\) is a negligible residual. Under Assumption~\ref{asp: assortment const}, Claim~\ref{claim: cont bd} shows that one may take
\[
    \mathfrak L_T\lesssim C_n\nu^2,
    \qquad
    \mathfrak r_T\lesssim \nu^2/T,
\]
which yields the \(C_n\)-dependent rates and conditions in the estimation, debiased-expansion, and p-value validity results below. Thus Assumption~\ref{asp: assortment const} should be viewed as an explicit and verifiable sufficient condition for local Hessian stability of the adaptive policy, rather than as a necessary condition for the perturbation p-value.

The Gaussian revenue assumption can also be relaxed. If the centered revenue vector admits a Stein kernel \(\tau(\cdot)\) with revenue covariance target \(\bLambda_r\) and
\[
    \Delta_r
    :=
    \mathbb E
    \max_{j,k}
    \left|
        \tau_{jk}\{\br_t-\mathbb E(\br_t)\}
        -
        [\bLambda_r]_{jk}
    \right|
\]
is small, then the Gaussian comparison step in Claim~\ref{claim: cont bd} can be replaced by the Stein-kernel comparison in Lemma~5.1 of \cite{belloni2024anticon}, at the cost of an additional max-difference approximation error of order
\(C_r\sqrt{\Delta_r K\log n}\), where \(C_r\) depends on the normalized expected maximum of the corresponding Gaussian assortment-level revenue process and on the structure of \(\bcS^K\).
\end{remark}
The next theorem establishes uniform convergence rates for the local penalized likelihood estimator.
\begin{theorem}[Rates for the penalized estimator]\label{thm:lasso-rates}
Suppose Assumptions~\ref{asp: abs cont of r}, \ref{asp: cov}--\ref{asp: assortment const} hold. Assume further that $\tau \le c\,\underline\lambda\,(C_n\nu^2 s)^{-1}$ for a sufficiently small absolute constant $c>0$, and that
\[
\lambda_{t-1} = C_{\lambda}\,\nu\Bigl\{\log(Tp)+\sqrt{t\log(Tp)}\Bigr\}
\]
for a sufficiently large constant $C_{\lambda}>0$ independent of $t$. Then there exist absolute constants $C,C'>0$ such that, with probability at least $1-O(T^{-1})$, the following bounds hold uniformly over all
\[
t \ge C'\,\nu^4 s^2 \underline\lambda^{-2}\log(Tp):
\]
\begin{align}
\|\hat\bbeta_{t-1}-\bbeta^*\|_2
&\le C\nu \sqrt{\frac{s\log(Tp)}{\underline\lambda^2\,t}}, \label{eq:l2-rate}\\
\|\hat\bbeta_{t-1}-\bbeta^*\|_1
&\le C\nu s \sqrt{\frac{\log(Tp)}{\underline\lambda^2\,t}}. \label{eq:l1-rate}
\end{align}
\end{theorem}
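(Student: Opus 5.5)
We follow the standard Lasso oracle-inequality route, adapted to the adaptive design. Fix $t$ and write $\Delta:=\hat\bbeta_{t-1}-\bbeta^*$. Since $\|\bbeta^*-\hat\bbeta_0\|_1\le\tau\le 2\tau$, the truth is feasible for \eqref{eq: theta hat}. The basic inequality therefore gives
\[
\ell_{t-1}(\hat\bbeta_{t-1})-\ell_{t-1}(\bbeta^*)\le \lambda_{t-1}(\|\bbeta^*\|_1-\|\hat\bbeta_{t-1}\|_1).
\]
We also have $\|\Delta\|_1\le 3\tau$, so every point on the segment between $\bbeta^*$ and $\hat\bbeta_{t-1}$ lies in $\cB_1(\bbeta^*,3\tau)$, where Assumption~\ref{asp: cov} applies. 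The proof then has three ingredients: a score bound, restricted strong convexity of $\ell_{t-1}$ near $\bbeta^*$ with curvature of order $t\underline\lambda$, and the usual cone argument.

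\emph{Score bound.} By \eqref{eq: grad}, $-\nabla\ell_{t-1}(\bbeta^*)=\sum_{t'<t}\xi_{t'}$ with $\xi_{t'}=\bv_{t',i_{t'}}-\EE_{\bbeta^*,t',\cS_{t'}}(\bv_{t',i_{t'}})$. The set $\cS_{t'}$ is determined by $\cH_{t'-1}$ and $(\bv_{t'},\br_{t'})$. By Assumption~\ref{asp: dpd structure}, $\{\xi_{t'}\}$ is therefore a martingale difference sequence with respect to the filtration $\sigma(\cH_{t'-1},\bv_{t'},\br_{t'})$, and each coordinate is bounded by $2\nu$. Apply Freedman's (or Azuma's) inequality to each coordinate, then take a union bound over $p$ coordinates and $T$ times. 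With probability $1-O(T^{-1})$, uniformly in $t$,
\[
\|\nabla\ell_{t-1}(\bbeta^*)\|_\infty\lesssim \nu\{\log(Tp)+\sqrt{t\log(Tp)}\}\le \lambda_{t-1}/2.
\]
Combining this with convexity of the loss, the basic inequality puts $\Delta$ in the cone $\|\Delta_{S^c}\|_1\le 3\|\Delta_S\|_1$, where $S=\operatorname{supp}(\bbeta^*)$.

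\emph{Curvature (main obstacle).} By Taylor expansion,
\[
\ell_{t-1}(\hat\bbeta_{t-1})-\ell_{t-1}(\bbeta^*)-\nabla\ell_{t-1}(\bbeta^*)^\top\Delta=\Delta^\top\nabla^2\ell_{t-1}(\bar\bbeta)\Delta/2 .
\]
We need a lower bound on this Hessian quadratic form, which we obtain in three steps.

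First, compare the Hessian at $\bar\bbeta$ with the one at $\bbeta^*$. Assumption~\ref{asp: cov} bounds the probability ratios by $\rho$ on $\cB_1(\bbeta^*,3\tau)$, so changing $\bbeta$ in the choice probabilities (with the assortments held fixed) multiplies the covariance summands by at most $e^{O(\nu\|\Delta\|_1)}$. This is a constant factor because $\tau=o(1)$. It thus suffices to lower bound $\sum_{t'<t}\Delta^\top\bSigma_{t'}(\hat\bbeta_{t'-1})\Delta$.

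Second, replace each summand by its conditional mean $\EE\{\bSigma_{t'}(\hat\bbeta_{t'-1})\mid\hat\bbeta_{t'-1}\}$. Here $\hat\bbeta_{t'-1}$ is $\cH_{t'-1}$-measurable, and $(\bv_{t'},\br_{t'})$ is independent of $\cH_{t'-1}$ by Assumption~\ref{asp: abs cont of r}. The matrix differences therefore form bounded martingale differences, and another max-norm Azuma bound gives a deviation of order $\nu^2\sqrt{t\log(Tp)}$ in $\|\cdot\|_{\max}$.

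Third, compare the conditional mean with $\bSigma^*$. We invoke the local Hessian continuity of Claim~\ref{claim: cont bd}, which is where Assumption~\ref{asp: assortment const} and its Gaussian revenue structure enter: it controls how often the argmax assortment switches. This gives
\[
\|\EE\{\bSigma_{t'}(\hat\bbeta_{t'-1})\mid\cdot\}-\bSigma^*\|_{\max}\lesssim C_n\nu^2\|\hat\bbeta_{t'-1}-\bbeta^*\|_1+\nu^2/T\lesssim C_n\nu^2\tau+\nu^2/T.
\]

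On the cone we have $\|\Delta\|_1^2\le 16s\|\Delta\|_2^2$. Each max-norm perturbation $E$ therefore costs at most $16s\|E\|_{\max}\|\Delta\|_2^2$ in the quadratic form. The assumption $\tau\le c\underline\lambda(C_n\nu^2s)^{-1}$ makes the drift term at most $\underline\lambda/8$. The assumption $t\ge C'\nu^4s^2\underline\lambda^{-2}\log(Tp)$ makes the martingale deviation $s\nu^2\sqrt{\log(Tp)/t}$ at most $\underline\lambda/8$. Together these yield
\[
\Delta^\top\nabla^2\ell_{t-1}(\bar\bbeta)\Delta\gtrsim t\underline\lambda\|\Delta\|_2^2
\]
uniformly over $t$ in the stated range.

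\emph{Conclusion.} Substituting the curvature bound into the basic inequality gives
\[
c\,t\underline\lambda\|\Delta\|_2^2\le \tfrac32\lambda_{t-1}\|\Delta_S\|_1\le 2\lambda_{t-1}\sqrt s\|\Delta\|_2,
\]
hence $\|\Delta\|_2\lesssim \lambda_{t-1}\sqrt s/(t\underline\lambda)\asymp \nu\sqrt{s\log(Tp)/(\underline\lambda^2t)}$. The $\log(Tp)$ term in $\lambda_{t-1}$ is dominated by the $\sqrt{t\log(Tp)}$ term in this range of $t$, which gives \eqref{eq:l2-rate}. The cone condition then gives $\|\Delta\|_1\le4\sqrt s\|\Delta\|_2$, which is \eqref{eq:l1-rate}. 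A final union bound over $t\le T$ keeps the total failure probability at $O(T^{-1})$.

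The delicate point is the curvature step. The design is adaptive, because the assortments depend on past estimates, and the map from $\bbeta$ to the selected assortment is discontinuous. This is exactly why we must average over revenues and use the stability bound of Claim~\ref{claim: cont bd} rather than pointwise continuity.
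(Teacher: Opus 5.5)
Your proposal is correct and follows essentially the same route as the paper: the basic inequality (with $\bbeta^*$ feasible), a martingale bound on the score that yields the cone condition, and a max-norm Hessian perturbation split into a fixed-assortment term, a martingale term, and the selection-drift term from Claim~\ref{claim: cont bd}, combined with $\|\Delta\|_1^2\le 16s\|\Delta\|_2^2$. The one difference is the first Hessian comparison, where you use a multiplicative (Loewner-order) comparison of the multinomial covariances instead of the paper's additive Lipschitz bound of order $\nu^3 t\tau$. That step is valid, but its justification is the feature bound $\|\bv_{tj}\|_\infty\le\nu$ (log-probability ratios between $\bar\bbeta$ and $\bbeta^*$ are at most $2\nu\|\bar\bbeta-\bbeta^*\|_1$, and variances then compare through $\operatorname{Var}_q(X)=\min_c \EE_q(X-c)^2$), not the within-assortment ratio bound $\rho$ that you cite.
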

The proof of Theorem~\ref{thm:lasso-rates} is given in Section~\ref{sec: proof thm lasso rates}.

\begin{remark}\label{rmk: constraint removal}
When the time horizon is sufficiently large so that $\nu s \sqrt{\log(Tp)/(\underline\lambda^{2} t)} \le c \tau$ for a sufficiently small constant $c > 0$, the estimator \(\hat\bbeta_{t-1}\) lies in the constraint set
\(\cB_1(\hat\bbeta_0,2\tau)\) and therefore coincides with the unconstrained
\(\ell_1\)-penalized estimator. This observation allows us to use the local
penalized estimator for terminal-time inference.
\end{remark}

Now we define the true support of the coefficient vector
\[
    \cI_0 := \{j\in[p]: \bbeta_j^*\neq 0\}.
\]
To prepare for the effective-support recovery result, we introduce an effective signal-strength and weak-tail sparsity condition, together with a modified mutual incoherence condition.


\begin{assumption}[Effective support and weak-tail sparsity]
\label{asp: min signal}
There exists a decomposition
\[
    \mathcal I_0=\mathcal I_*\cup \mathcal I_{\rm wk},
    \qquad
    \mathcal I_*\cap\mathcal I_{\rm wk}=\emptyset,
\]
where \(\mathcal I_*\) is the effective support and
\(\mathcal I_{\rm wk}:=\mathcal I_0\setminus\mathcal I_*\) is the weak support. Let
\(s_*:=|\mathcal I_*|\). For notational simplicity, we assume throughout that
\(\mathcal I_*\neq\emptyset\).

There exists a sufficiently large constant \(C_\beta>0\) such that
\[
    \min_{j\in\mathcal I_*}|\beta_j^*|
    \ge
    C_\beta\nu\sqrt{\frac{s_*\log(Tp)}{\underline\lambda^2T}} .
\]

Moreover, the weak coefficients satisfy a rate condition of the form
\[
    \|[\boldsymbol\beta^*]_{\mathcal I_{\rm wk}}\|_1
    =
    \eta_T,
\]
where \(\eta_T\to 0\) as \(T\to\infty\), and may vary across different results; in particular, each theorem may impose additional restrictions on \(\eta_T\) as needed.

When \(\mathcal I_{\rm wk}=\emptyset\), the above condition is interpreted as holding automatically.
\end{assumption}
\begin{remark}
The case \(\mathcal I_*=\emptyset\) can be handled by the same arguments, with the beta-min condition omitted. In that case, all nonzero coefficients belong to the weak component and the preceding weak-tail condition implies that \(\boldsymbol\beta^*\) is asymptotically close to \(\mathbf 0\) in \(\ell_1\) norm. We exclude this case from the main statements only to avoid separate notation for an empty selected support.
\end{remark}
\begin{assumption}[Modified mutual incoherence]
\label{asp: mutual inch}
There exists a constant \(\gamma_0\in(0,1]\) such that
\[
    \left\|
        \boldsymbol\Sigma^*_{\mathcal I_*^c,\mathcal I_*}
        \left(\boldsymbol\Sigma^*_{\mathcal I_*}\right)^{-1}
    \right\|_{\infty}
    \le
    1-\gamma_0,
\]
where
\(\boldsymbol\Sigma_{\mathcal I_*}^*
:=
\boldsymbol\Sigma^*_{\mathcal I_*,\mathcal I_*}\) is the principal submatrix of
\(\boldsymbol\Sigma^*\) indexed by \(\mathcal I_*\times\mathcal I_*\), and
\(\mathcal I_*^c:=[p]\setminus\mathcal I_*\). 
\end{assumption}
Assumption~\ref{asp: min signal} is an effective signal-strength condition rather than a minimum signal-strength condition for exact recovery of the full support. This distinction is important in the MNL setting, where many contextual features may have small but nonzero effects on utilities. The set \(\mathcal I_*\) contains the coordinates whose effects are large enough to be statistically distinguishable at the estimation scale, while \(\mathcal I_{\rm wk}\) contains weak coordinates whose aggregate contribution is asymptotically negligible.
Assumption~\ref{asp: mutual inch} is the usual irrepresentability-type condition, but imposed relative to the effective support \(\mathcal I_*\) rather than the full nonzero support \(\mathcal I_0\). 

Under these additional conditions, we establish in the following corollary of Theorem~\ref{thm:lasso-rates} that, with high probability, a unique solution to \eqref{eq: theta hat} exists and recovers the effective support $\mathcal{I}_*$.
\begin{corollary}[Effective support recovery]
\label{col: unique solution decomp}
Suppose Assumptions~\ref{asp: abs cont of r}, \ref{asp: cov}--\ref{asp: mutual inch} hold. Assume that
\[
   C\nu s\sqrt{\frac{\log(Tp)}{\underline\lambda^2T}}
   \le \tau
   \le
   c\frac{\underline\lambda^2}{C_n\nu^4s^2},
   \qquad
   T\ge
   C\frac{\nu^8s^4\log(Tp)}{\underline\lambda^4}, \qquad \eta_T =  o\left(
        \frac{1}{\nu}\sqrt{\frac{\log(Tp)}{T}}
    \right),
\]
where \(C>0\) is sufficiently large and \(c>0\) is sufficiently small. Choose
\[
    \lambda_{T-1}
    =
    C_\lambda\nu\sqrt{T\log(Tp)}
\]
with \(C_\lambda>0\) sufficiently large. Then, with probability at least
\(1-O(T^{-1})\), the estimator in \eqref{eq: theta hat} admits a unique solution
\(\hat{\boldsymbol\beta}_{T-1}\) that recovers the effective support of
\(\boldsymbol\beta^*\); that is, $\mathcal I=\mathcal I_* $.

Moreover, with probability at least \(1-O(T^{-1})\),
\begin{equation}\label{eq: l1 rate improved}
    \|\hat{\boldsymbol\beta}_{T-1}-\boldsymbol\beta^*\|_1
    \le
    C\nu s_*\sqrt{\frac{\log(Tp)}{\underline\lambda^2T}} .
\end{equation}
This improves \eqref{eq:l1-rate} when \(s_*\ll s\).
\end{corollary}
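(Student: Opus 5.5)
Our plan is the primal--dual witness (PDW) construction of Wainwright, adapted to the adaptive MNL likelihood and to an effective rather than exact support. On the event of Theorem~\ref{thm:lasso-rates}, together with Remark~\ref{rmk: constraint removal} (the lower bound on \(\tau\) guarantees this), the local constraint \(\|\bbeta-\hat\bbeta_0\|_1\le2\tau\) is inactive at \(T-1\). Hence \(\hat\bbeta_{T-1}\) satisfies the unconstrained KKT conditions \(\nabla\ell_{T-1}(\hat\bbeta)+\lambda_{T-1}\hat z=0\) with \(\hat z\in\partial\|\hat\bbeta\|_1\).

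\textbf{Step 1: the oracle restricted problem.} We solve the \(\ell_1\)-penalized problem with the constraint \(\bbeta_{\cI_*^c}=0\) and call the solution \(\check\bbeta\). We set \(\check z_{\cI_*}=\sgn(\check\bbeta_{\cI_*})\) and define \(\check z_{\cI_*^c}\) through the full KKT equation. The mean-value theorem gives
\[
\nabla\ell_{T-1}(\check\bbeta)=\nabla\ell_{T-1}(\bbeta^*)+\bar H(\check\bbeta-\bbeta^*),
\qquad \bar H=\textstyle\int_0^1\nabla^2\ell_{T-1}(\bbeta^*+u(\check\bbeta-\bbeta^*))\,du .
\]
The error vector \(\check\bbeta-\bbeta^*\) carries a fixed term \(-\bbeta^*_{\cI_{\rm wk}}\) on \(\cI_{\rm wk}\), whose \(\ell_1\) norm is \(\eta_T\). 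Solving on \(\cI_*\) yields
\[
\check\bbeta_{\cI_*}-\bbeta^*_{\cI_*}=-\bar H_{\cI_*}^{-1}\bigl(W_{\cI_*}+\lambda\check z_{\cI_*}-\bar H_{\cI_*,\cI_{\rm wk}}\bbeta^*_{\cI_{\rm wk}}\bigr),
\]
where \(W=\nabla\ell_{T-1}(\bbeta^*)\).

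\textbf{Step 2: three ingredients.} \emph{(a) Score bound.} \(W\) is a sum of bounded martingale differences: by Assumption~\ref{asp: dpd structure}, each summand has conditional mean zero given the history, and its entries are bounded by \(2\nu\). Azuma--Hoeffding plus a union bound then give \(\|W\|_\infty\lesssim\nu\sqrt{T\log(Tp)}\), which is \(\le\gamma_0\lambda/8\) once \(C_\lambda\) is large.

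\emph{(b) Hessian concentration, the main obstacle.} We need \(\|\bar H/T-\bSigma^*\|_{\max}\) small. We split it into three pieces. First, the distance from \(\bar H/T\) to the average of \(\bSigma_{t'}(\hat\bbeta_{t'-1})\); this is MNL smoothness in \(\bbeta\), using Assumption~\ref{asp: cov} and the \(\ell_1\) rates, and gives \(O(\nu^3 s\sqrt{\log(Tp)/T})\). Second, a martingale concentration of \(\bSigma_{t'}(\hat\bbeta_{t'-1})\) around \(\EE\{\bSigma_{t'}(\bbeta)\mid\bbeta\}|_{\bbeta=\hat\bbeta_{t'-1}}\). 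This is valid because \(\hat\bbeta_{t'-1}\) is \(\cH_{t'-1}\)-measurable and \((\bv_{t'},\br_{t'})\) is independent of it by Assumption~\ref{asp: abs cont of r}. Third, the local continuity bound of Claim~\ref{claim: cont bd}, which controls the remaining piece by \(C_n\nu^2\|\hat\bbeta_{t'-1}-\bbeta^*\|_1+\nu^2/T\). The early periods, where Theorem~\ref{thm:lasso-rates} does not yet apply, contribute at most \(\tau C_n\nu^2\). Summing via Theorem~\ref{thm:lasso-rates} bounds the third piece by \(C_n\nu^3 s\sqrt{\log(Tp)/T}+C_n\nu^2\tau\). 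We need the total times \(s\) (to pass from max-norm to \(\|\cdot\|_\infty\) on \(s_*\)-blocks and to the incoherence matrix) to be \(\le c\underline\lambda\gamma_0\). This is exactly what the conditions \(\tau\le c\underline\lambda^2/(C_n\nu^4s^2)\) and \(T\gtrsim\nu^8s^4\log(Tp)/\underline\lambda^4\) deliver. Consequently \(\lambda_{\min}(\bar H_{\cI_*}/T)\ge\underline\lambda/2\), and the empirical incoherence is at most \(1-\gamma_0/2\).

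\emph{(c) Weak tail.} The term \(\|\bar H_{\cdot,\cI_{\rm wk}}\bbeta^*_{\cI_{\rm wk}}\|_\infty\le\nu^2T\eta_T\). The assumption on \(\eta_T\) makes this \(o(\sqrt{T\log(Tp)}/\nu\cdot\nu^2)\), which is negligible relative to \(\lambda\) for suitable constants.

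\textbf{Step 3: strict dual feasibility, sign consistency, uniqueness.} We substitute into the \(\cI_*^c\) block:
\[
\check z_{\cI_*^c}=\bar H_{\cI_*^c,\cI_*}\bar H_{\cI_*}^{-1}\check z_{\cI_*}+\lambda^{-1}(\text{score and weak terms}).
\]
By (a)--(c) this has \(\|\cdot\|_\infty\le1-\gamma_0/2+\gamma_0/4<1\).

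For sign consistency, Step 1 gives
\[
\|\check\bbeta_{\cI_*}-\bbeta^*_{\cI_*}\|_\infty\le\|\bar H_{\cI_*}^{-1}\|_\infty(\|W\|_\infty+\lambda+\nu^2T\eta_T)\lesssim\nu\sqrt{s_*\log(Tp)/(\underline\lambda^2T)},
\]
using \(\|\cdot\|_\infty\le\sqrt{s_*}\|\cdot\|_2\). The beta-min condition with large \(C_\beta\) then ensures no coordinate of \(\cI_*\) is zeroed, so \(\cI\supseteq\cI_*\).

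Strict dual feasibility, together with strict convexity of \(\ell_{T-1}\) on vectors supported on \(\cI_*\) (from \(\lambda_{\min}(\bar H_{\cI_*})>0\)), gives the standard PDW conclusion. Any minimizer is supported in \(\cI_*\), so \(\check\bbeta=\hat\bbeta_{T-1}\) is the unique solution and \(\cI=\cI_*\).

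Finally, \(\ell_1\)-summing the \(\cI_*\) error and adding \(\eta_T\) gives \eqref{eq: l1 rate improved}, since \(\eta_T\) is of smaller order. All events hold with probability \(1-O(T^{-1})\).
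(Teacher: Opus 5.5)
Your approach matches the paper's: a primal--dual witness on the effective support $\cI_*$, with the integrated Hessian $\bar H$, approximate incoherence $\|\bar H_{\cI_*^c,\cI_*}\bar H_{\cI_*,\cI_*}^{-1}\|_\infty\le 1-\gamma_0/2$, the weak-tail term $\bar H_{\cdot,\cI_{\rm wk}}[\bbeta^*]_{\cI_{\rm wk}}$ shown to be $o(\lambda_{T-1})$, and beta-min for signs. The gap is that you never localize the oracle $\check\bbeta$, and the argument uses that localization before proving it.

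Every Hessian control you rely on (Lemma~\ref{lem: grad hessian rates}, Claim~\ref{claim: cont bd}, the Hessian Lipschitz bound) holds only on $\cB_1(\bbeta^*,3\tau)$. Nothing in Step~1 places the segment from $\bbeta^*$ to $\check\bbeta$ in that ball. In Step~2(b) you control $\bar H$ through ``the $\ell_1$ rates'', but the only rate available is Theorem~\ref{thm:lasso-rates} for $\hat\bbeta_{T-1}$, and $\check\bbeta=\hat\bbeta_{T-1}$ is what you are trying to prove. Step~3's bound on $\check\bbeta_{\cI_*}-\bbeta^*_{\cI_*}$ divides by $\lambda_{\min}(\bar H_{\cI_*,\cI_*})$, which needs the same localization.

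If you meant the oracle problem to keep the constraint $\|\bbeta-\hat\bbeta_0\|_1\le 2\tau$, the Hessian bounds do apply. But then the block equation you solve in Step~1 carries a normal-cone term until you know the constraint is slack, and that again needs a rate for $\check\bbeta$. The paper breaks the loop in three steps:
\begin{itemize}
\item It adds $\|\bbeta-[\bbeta^*]_{\cI_*}\|_1\le\tau$ to the oracle problem.
\item It gets the $\ell_2$ rate from the basic inequality, using the uniform bound $\lambda_{\min}([\nabla^2\ell_{T-1}(\bbeta)]_{\cI_*,\cI_*})\ge (T-1)\underline\lambda/2$ over the ball.
\item Only then does it conclude the constraint is inactive, so the restricted KKT equation holds.
\end{itemize}

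Once localized, the simplest Hessian control is the crude uniform bound $C\nu^2\{C_nT\tau+\log(Tp)+\sqrt{T\log(Tp)}\}$ from Lemma~\ref{lem: grad hessian rates}; the stated conditions on $\tau$ and $T$ are calibrated to it. Your bookkeeping for the incoherence step is off. Perturbing $\bar H_{\cI_*^c,\cI_*}\bar H_{\cI_*,\cI_*}^{-1}$ in $\|\cdot\|_\infty$ multiplies (max-norm error)$/T$ by $\nu^2 s_*^2/\underline\lambda^2$, not by $s/\underline\lambda$ as your condition implicitly assumes. Your summed-rate route still closes, but only after using that the two-sided condition on $\tau$ forces $T\gtrsim C_n^2\nu^{10}s^6\log(Tp)/\underline\lambda^6$, together with $C_n\gtrsim\nu$.

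Two smaller points:
\begin{itemize}
\item For \eqref{eq: l1 rate improved}, summing your $\ell_\infty$ bound over $s_*$ coordinates loses a factor $\sqrt{s_*}$. Bound $\|\check\bbeta_{\cI_*}-\bbeta^*_{\cI_*}\|_2$ first, then use $\|\cdot\|_1\le\sqrt{s_*}\|\cdot\|_2$.
\item The ``standard PDW conclusion'' for a non-quadratic loss needs $\nabla\ell_{T-1}$ to take the same value at every minimizer. The paper secures this by proving the stacked linear predictors $u(\hat\bbeta)$ are unique, via strict convexity of the MNL negative log-likelihood in $u$.
\end{itemize}
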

The proof is given in Section~\ref{sec: proof col unique solution decomp}.
\begin{remark}
The lower bound on \(\tau\) should be read as a sample-size requirement: \(T\) must be large enough for the terminal estimator to improve over the pilot \(\hat\bbeta_0\), so that \(\hat\bbeta_{T-1}\in\cB_1(\hat\bbeta_0,2\tau)\) and hence coincides with the unconstrained \(\ell_1\)-penalized estimator.
\end{remark}

Another consequence of Theorem~\ref{thm:lasso-rates} is that it yields a revenue-loss guarantee for the dynamic policy. This justifies the earlier claim that the proposed procedure does not require sacrificing revenue through a purely exploratory data-collection phase in order to obtain valid inference. More formally, the expected regret of the dynamic policy is defined as
\[
\sum_{t=1}^{T-1}
\EE\Big\{
R(\cS_t^* \mid \bbeta^*,\bv_t,\br_t)
-
R(\cS_t \mid \bbeta^*,\bv_t,\br_t)
\Big\},
\]
which is the cumulative expected revenue loss incurred by offering \(\cS_t\) instead of the oracle assortment \(\cS_t^*\). The following corollary of Theorem~\ref{thm:lasso-rates} provides an upper bound on this regret, whose proof is deferred to Section~\ref{sec: proof col regret bound}.

\begin{corollary}\label{col: Regret bound}
Under the conditions of Theorem~\ref{thm:lasso-rates}, the dynamic data-collection policy in Section~\ref{sec: policy} satisfies
\[
\sum_{t=1}^{T-1}
\EE\Big\{
R(\cS_t^* \mid \bbeta^*,\bv_t,\br_t)
-
R(\cS_t \mid \bbeta^*,\bv_t,\br_t)
\Big\}
\lesssim
\frac{\nu^2\bar\mu s}{\underline\lambda}\sqrt{T\log(Tp)}.
\]
\end{corollary}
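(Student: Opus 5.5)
We bound the per-period regret by a uniform revenue-perturbation argument and then sum the rates from Theorem~\ref{thm:lasso-rates}.

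\textbf{Per-period reduction.} Since $\cS_t\in\cS_t(\hat\bbeta_{t-1})$ maximizes $R(\cdot\mid\hat\bbeta_{t-1},\bv_t,\br_t)$,
\[
R(\cS_t^*\mid\bbeta^*)-R(\cS_t\mid\bbeta^*)
\le \bigl[R(\cS_t^*\mid\bbeta^*)-R(\cS_t^*\mid\hat\bbeta_{t-1})\bigr]+\bigl[R(\cS_t\mid\hat\bbeta_{t-1})-R(\cS_t\mid\bbeta^*)\bigr].
\]
Hence the loss is at most $2\max_{\cS\in\bcS^K}|R(\cS\mid\hat\bbeta_{t-1},\bv_t,\br_t)-R(\cS\mid\bbeta^*,\bv_t,\br_t)|$. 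We bound this by the mean value theorem along the segment between $\bbeta^*$ and $\hat\bbeta_{t-1}$. This segment lies in $\cB_1(\bbeta^*,3\tau)$ because of the constraint $\|\bbeta-\hat\bbeta_0\|_1\le2\tau$ and \eqref{eq: beta 0 rate}.

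\textbf{Gradient bound.} From \eqref{eq: gradient revenue}, $\nabla R=\sum_{j\in\cS}P_j r_{tj}(\bv_{tj}-\sum_{j'}P_{j'}\bv_{tj'})$. This gives $\|\nabla R\|_\infty\le 2\nu\max_j|r_{tj}|$. It also gives the sharper bound $\lesssim\nu\max_j|r_{tj}|\sum_j P_j$, using $\sum_{j\in\cS}P_j\le1$. On the event $\max_{t,j}|r_{tj}|\le\bar\mu$, the per-period loss is therefore $\lesssim \nu\bar\mu\|\hat\bbeta_{t-1}-\bbeta^*\|_1$. This event has probability $1-O(T^{-1})$ by Gaussian tails and the choice of $\bar\mu$ (union bound over $n$ items and $T$ periods). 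Because revenues are unbounded, the complement event needs separate treatment. There we bound the loss by $2\max_j|r_{tj}|$ and use Cauchy--Schwarz with $\EE\max_j r_{tj}^2\lesssim\bar\mu^2$. The resulting contribution summed over $t$ is $O(\bar\mu)$, which is negligible.

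\textbf{Summation.} For $t\ge t_0:=C'\nu^4s^2\underline\lambda^{-2}\log(Tp)$, Theorem~\ref{thm:lasso-rates} gives, on an event of probability $1-O(T^{-1})$,
\[
\|\hat\bbeta_{t-1}-\bbeta^*\|_1\lesssim \nu s\sqrt{\log(Tp)/(\underline\lambda^2 t)}.
\]
Summing over $t\le T$ with $\sum_t t^{-1/2}\le 2\sqrt T$ yields $\nu^2\bar\mu s\underline\lambda^{-1}\sqrt{T\log(Tp)}$. On the failure event, the total regret is at most $T\cdot O(\bar\mu)$ times probability $O(T^{-1})$, which is $O(\bar\mu)$.

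\textbf{Main obstacle: the burn-in periods $t<t_0$.} Here we use only $\|\hat\bbeta_{t-1}-\bbeta^*\|_1\le3\tau$, which bounds the per-period loss by $\nu\bar\mu\tau$. The burn-in then contributes $t_0\nu\bar\mu\tau$. Under the theorem's condition $\tau\le c\underline\lambda(C_n\nu^2 s)^{-1}$, and using $C_n\gtrsim\nu\sqrt{\log n}$, this is
\[
\lesssim \frac{\nu^3 s\,\bar\mu\log(Tp)}{\underline\lambda C_n}\lesssim \frac{\nu^2\bar\mu s}{\underline\lambda}\log(Tp).
\]
This is dominated by the main term whenever $\log(Tp)\le T$. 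If that comparison turns out too loose, the fallback is the cruder bound $t_0\nu\bar\mu\tau\le t_0\nu\bar\mu$, which is still $o(\sqrt{T\log(Tp)})$ under the implicit requirement $t_0\le T$ of a sample-size regime.
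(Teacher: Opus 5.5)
Your argument is correct and is essentially the paper's proof: the same three-term reduction to $2\max_{\cS\in\bcS^K}|R(\cS\mid\hat\bbeta_{t-1},\bv_t,\br_t)-R(\cS\mid\bbeta^*,\bv_t,\br_t)|$, the gradient bound $\lesssim\nu\max_j|r_{tj}|$, and summation of the $\ell_1$ rate via $\sum_t t^{-1/2}\lesssim\sqrt T$; your explicit treatment of the burn-in periods $t<t_0$ through the $3\tau$ constraint fills in a step the paper leaves implicit. The remaining slips are cosmetic: the revenue envelope should be $2\bar\mu$ rather than $\bar\mu$, and Cauchy--Schwarz on a joint event of probability $O(T^{-1})$ gives $O(\bar\mu T^{-1/2})$ per period, hence $O(\bar\mu\sqrt T)$ rather than $O(\bar\mu)$ in total, which is still dominated by the main term since $\underline\lambda\le\nu^2$.
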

\begin{remark}
Corollary~\ref{col: Regret bound} shows that the dynamic policy incurs sublinear regret while collecting data for the terminal inference procedure. Ignoring logarithmic and problem-dependent constants, the leading dependence is \(s\sqrt T\). When \(s\asymp d\), this matches the \(\widetilde O(d\sqrt T)\) regret rate in \cite{Chen2020context} up to logarithmic factors, and is consistent with their information-theoretic lower bound \(\Omega(d\sqrt T/K)\) up to logarithmic and \(K\)-dependent factors. If \(\hat\bbeta_0\) is obtained from an initial online phase, this may induce additional regret; under the required \(\tau\)-accuracy, that contribution is dominated by the regret from the subsequent exploitative stage.
\end{remark}
With these convergence and support-recovery guarantees in place, we now turn to the inferential analysis. We begin with a non-asymptotic error decomposition for the debiased estimator \(\tilde\bbeta^d\).
\begin{corollary}\label{col: error decomp debias lasso}
Under the same conditions as Corollary~\ref{col: unique solution decomp}, with probability at least
\(1-O(T^{-1})\), the following statements hold:
\begin{equation}\label{eq: debiased-error-decomp}
\begin{aligned}
&[\tilde\bbeta^{\,d}]_{\cI_0^{c}}=[\bbeta^{*}]_{\cI_0^{c}}=\mathbf 0,\\
&\|[\tilde\bbeta^{\,d}]_{\cI_{\rm wk}}-[\bbeta^{*}]_{\cI_{\rm wk}}\|_1
=
\|[\boldsymbol\beta^*]_{\mathcal I_{\rm wk}}\|_1
=
\eta_T,\\
&[\tilde\bbeta^{\,d}]_{\cI_*}-[\bbeta^{*}]_{\cI_*}
=
-\frac{1}{T-1}\,(\bSigma_{\cI_*}^{*})^{-1}\,[\nabla_{\bbeta}\ell_{T-1}(\bbeta^{*})]_{\cI_*}
+\Rb .
\end{aligned}
\end{equation}
The remainder term satisfies
\[
\|\Rb\|_2
\lesssim
\frac{\nu^3s_*^{3/2}\log(Tp)}{T\underline\lambda^2}
\left\{
    1+\frac{C_n\nu s+\nu^2s_*}{\underline\lambda}
\right\}
+
\frac{\nu^2\sqrt{s_*}}{\underline\lambda}
\eta_T .
\]
\end{corollary}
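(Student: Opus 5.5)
We work on the intersection of the high-probability events of Theorem~\ref{thm:lasso-rates} and Corollary~\ref{col: unique solution decomp}. On this event $\cI=\cI_*$, the estimator is unconstrained (Remark~\ref{rmk: constraint removal}), and $\|\hat\bbeta_{T-1}-\bbeta^*\|_1\lesssim \nu s_*\sqrt{\log(Tp)/(\underline\lambda^2T)}$.

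\emph{First two lines of \eqref{eq: debiased-error-decomp}.} These are immediate from the construction. $\tilde\bbeta^d$ vanishes off $\cI=\cI_*$, and $\cI_0^c\cup\cI_{\rm wk}\subseteq\cI_*^c$. Hence $[\tilde\bbeta^d]_{\cI_0^c}=0=[\bbeta^*]_{\cI_0^c}$, and on $\cI_{\rm wk}$ the error equals $-[\bbeta^*]_{\cI_{\rm wk}}$, whose $\ell_1$ norm is $\eta_T$ by Assumption~\ref{asp: min signal}.

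\emph{Third line.} Write $H:=\nabla^2\ell_{T-1}$. Since $\hat\bbeta_{T-1}$ is supported on $\cI_*$, a Taylor expansion of the score restricted to $\cI_*$ gives
\[
[\nabla\ell_{T-1}(\hat\bbeta_{T-1})]_{\cI_*}
=[\nabla\ell_{T-1}(\bbeta^*)]_{\cI_*}
+[\bar H]_{\cI_*,\cI_*}\Delta_{\cI_*}
-[\bar H]_{\cI_*,\cI_{\rm wk}}[\bbeta^*]_{\cI_{\rm wk}},
\]
where $\Delta=\hat\bbeta_{T-1}-\bbeta^*$ and $\bar H$ is the integrated Hessian along the segment. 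Substituting into the definition of $\tilde\bbeta^d$ yields
\[
[\tilde\bbeta^d-\bbeta^*]_{\cI_*}=-\widehat\bTheta[\nabla\ell_{T-1}(\bbeta^*)]_{\cI_*}+\widehat\bTheta\big([\hat H-\bar H]_{\cI_*,\cI_*}\Delta_{\cI_*}+[\bar H]_{\cI_*,\cI_{\rm wk}}[\bbeta^*]_{\cI_{\rm wk}}\big),
\]
with $\hat H=H(\hat\bbeta_{T-1})$. Then replace $\widehat\bTheta$ by $(T-1)^{-1}(\bSigma^*_{\cI_*})^{-1}$, so that
\[
\Rb=\Rb_1+\Rb_2+\Rb_3,
\]
with the three pieces defined as follows.

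\emph{Estimating the pieces.}
\begin{enumerate}
\item[(i)] $\Rb_1=\{\widehat\bTheta-(T-1)^{-1}(\bSigma^*_{\cI_*})^{-1}\}[\nabla\ell_{T-1}(\bbeta^*)]_{\cI_*}$. The score is a martingale with bounded increments ($\|\bv\|_\infty\le\nu$). A Freedman/Azuma bound plus a union bound gives $\|[\nabla\ell_{T-1}(\bbeta^*)]_{\cI_*}\|_2\lesssim\nu\sqrt{s_*T\log(Tp)}$.
\item[(ii)] $\Rb_2=\widehat\bTheta[\hat H-\bar H]_{\cI_*,\cI_*}\Delta_{\cI_*}$. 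This is a third-derivative (Lipschitz-Hessian) term. Since MNL third derivatives are bounded by $\nu^3$ per period, it is $\lesssim T^{-1}\underline\lambda^{-1}\cdot T\nu^3\sqrt{s_*}\|\Delta\|_1^2$, which produces the $\nu^3 s_*^{3/2}\log(Tp)/(T\underline\lambda^2)\cdot \nu^2 s_*/\underline\lambda$ contribution.
\item[(iii)] $\Rb_3=\widehat\bTheta[\bar H]_{\cI_*,\cI_{\rm wk}}[\bbeta^*]_{\cI_{\rm wk}}$. Entries of $\bar H/T$ are at most $\nu^2$, so $\|\Rb_3\|_2\lesssim\underline\lambda^{-1}\nu^2\sqrt{s_*}\eta_T$.
\end{enumerate}

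\emph{Main obstacle: controlling $\widehat\bTheta$.} The difficult step is bounding $\|T^{-1}[\hat H]_{\cI_*,\cI_*}-\bSigma^*_{\cI_*}\|_{\max}$, where the summands involve adaptively chosen assortments $\cS_{t'}(\hat\bbeta_{t'-1})$. We split this difference into three parts.
\begin{enumerate}
\item[(a)] Replacing the parameter in the item-draw expectations by $\bbeta^*$ costs $\lesssim\nu^3\|\Delta\|_1$.
\item[(b)] Replacing $\bSigma_{t'}(\hat\bbeta_{t'-1})$ by its conditional mean $\EE\{\bSigma_{t'}(\hat\bbeta_{t'-1})\mid\cH_{t'-1}\}$ leaves a martingale difference array. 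Azuma with a union bound over $s_*^2$ entries bounds it by $\nu^2\sqrt{\log(Tp)/T}$; this uses Assumptions~\ref{asp: abs cont of r} and \ref{asp: dpd structure}, which make the conditional mean equal to $\EE\{\bSigma_t(\bbeta)\mid\bbeta\}$ evaluated at $\bbeta=\hat\bbeta_{t'-1}$.
\item[(c)] Replacing $\EE\{\bSigma_t(\hat\bbeta_{t'-1})\mid\hat\bbeta_{t'-1}\}$ by $\bSigma^*$ uses the local continuity bound of Claim~\ref{claim: cont bd}, $\mathfrak L_T\lesssim C_n\nu^2$ and $\mathfrak r_T\lesssim\nu^2/T$. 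Summing $\|\hat\bbeta_{t'-1}-\bbeta^*\|_1\lesssim\nu s\sqrt{\log(Tp)/(\underline\lambda^2t')}$ from Theorem~\ref{thm:lasso-rates} gives an average error of order $C_n\nu^3 s\sqrt{\log(Tp)/(\underline\lambda^2T)}$. The early periods not covered by the theorem are handled by $\|\cdot\|_1\le3\tau$ and are negligible under the stated lower bound on $T$.
\end{enumerate}
Converting the max-norm bound to operator norm costs a factor $s_*$. The condition on $T$ then ensures that $\lambda_{\min}(T^{-1}[\hat H]_{\cI_*,\cI_*})\ge\underline\lambda/2$, which gives $\|\widehat\bTheta-(T-1)^{-1}(\bSigma^*_{\cI_*})^{-1}\|_2\lesssim T^{-1}\underline\lambda^{-2}\cdot(\text{that error})$.

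Multiplying this operator-norm bound by the score bound from (i) gives the $\nu^3 s_*^{3/2}\log(Tp)/(T\underline\lambda^2)\cdot C_n\nu s/\underline\lambda$ term. The leading "$1$" in the stated bound absorbs the pure sampling fluctuation from (b), together with the adjustment between $T$ and $T-1$.
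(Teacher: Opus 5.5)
Your proof is correct: each of $\Rb_1,\Rb_2,\Rb_3$ reproduces one term of the stated bound. You do, however, split the remainder differently from the paper. The paper writes
\[
\begin{aligned}
\Rb={}&-\bigl\{\widehat\bTheta-\tfrac{1}{T-1}(\bSigma^*_{\cI_*})^{-1}\bigr\}[\nabla_{\bbeta}\ell_{T-1}(\hat\bbeta_{T-1})]_{\cI_*}
-\bigl\{\tfrac{1}{T-1}(\bSigma^*_{\cI_*})^{-1}\bar H_{\cI_*,\cI_*}-\Ib_{s_*}\bigr\}\Delta_{\cI_*}\\
&+\tfrac{1}{T-1}(\bSigma^*_{\cI_*})^{-1}\bar H_{\cI_*,\cI_{\rm wk}}[\bbeta^*]_{\cI_{\rm wk}}.
\end{aligned}
\]
Here the inverse-Hessian error multiplies the score at $\hat\bbeta_{T-1}$. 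That score is bounded deterministically by $\lambda_{T-1}\sqrt{s_*}$ through the KKT identity on $\cI_*$. The linearization error is handled by applying the same refined Hessian-concentration bound to $\bar H$, so no separate Taylor-remainder term appears.

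You instead let the inverse-Hessian error act on the score at $\bbeta^*$, which you bound by the martingale inequality of Lemma~\ref{lem: grad hessian rates}. You then isolate the pure second-order remainder $\widehat\bTheta[\hat H-\bar H]_{\cI_*,\cI_*}\Delta_{\cI_*}$. This term needs only the global $\nu^3$ bound on the MNL third derivatives and $\lambda_{\min}(\widehat\bTheta^{-1})\gtrsim T\underline\lambda$.

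What your split buys:
\begin{itemize}
\item The adaptive-design difficulty (the martingale Hessian fluctuation and the selection-continuity bound of Claim~\ref{claim: cont bd}) enters only through $\Rb_1$.
\item The expansion takes the natural one-step form: $-\widehat\bTheta[\nabla_{\bbeta}\ell_{T-1}(\bbeta^*)]_{\cI_*}$ plus a smoothness remainder.
\end{itemize}
What the paper's split buys: it uses the KKT identity instead of a score-concentration step. The cost is that the design-dependent Hessian error enters both of its first two terms, though with the same final order.

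One small slip: your $\Rb_1$ should carry a minus sign. This does not affect the norm bound.
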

The proof is given in Section~\ref{sec: proof col error decomp lasso}. Corollary~\ref{col: error decomp debias lasso} provides the linear expansion that underlies the subsequent inference, with a leading term given by a martingale score on the effective support. We use this expansion to calibrate the directional and magnitude components of the estimation uncertainty, and then establish the validity and power of the proposed test in the next section.
\subsection{Testing Validity and Power}

Since the terminal inference is conditional on the realized contextual information
\((\bv_T,\br_T)\), we first define the null and alternative parameter spaces for
\(\bbeta^*\) relative to this terminal context. For \((\bv_T,\br_T)\), define
\begin{equation}\label{eq: bTheta_0}
    \cM_0(\bv_T,\br_T)
    :=
    \{\bbeta^* \in \RR^p: \cS_T(\bbeta^*) \cap \bcS_0 \ne \emptyset\},
\end{equation}
and
\begin{equation}\label{eq: bTheta_1}
    \cM_1(\bv_T,\br_T)
    :=
    \{\bbeta^* \in \RR^p: \cS_T(\bbeta^*) \cap \bcS_0 = \emptyset\}.
\end{equation}

The proposed procedure does not calibrate uncertainty uniformly over all candidate assortments in \(\bcS^K\). Such a uniform calibration is typically conservative, because assortments that are far from maximizing the null or alternative revenue surfaces do not affect the sign of the max-difference statistic. Instead, the validity and power analysis can be localized to assortments that may become maximizers after perturbing the terminal revenue surface within the statistical error scale. To formalize this localization, define
\begin{align}
    \bar\bcS_{0}(\bv_T,\br_T)
    &=
    \left\{
    \cS \in \bcS_0:
    R(\cS \mid \bbeta^*,\bv_T,\br_T)
    \ge
    \max_{\cS' \in \bcS_0}
    R(\cS' \mid \bbeta^*,\bv_T,\br_T)
    -
    18\nu s_* \bar\mu
    \sqrt{\frac{\log T}{T\underline\lambda}}
    \right\}, \label{eq: S_0 max cand} \\
    \bar\bcS_{1}(\bv_T,\br_T)
    &=
    \left\{
    \cS \notin \bcS_0:
    R(\cS \mid \bbeta^*,\bv_T,\br_T)
    \ge
    \max_{\cS' \notin \bcS_0}
    R(\cS' \mid \bbeta^*,\bv_T,\br_T)
    -
    18\nu s_* \bar\mu
    \sqrt{\frac{\log T}{T\underline\lambda}}
    \right\}. \label{eq: S_1 max cand}
\end{align}
The sets \(\bar\bcS_0(\bv_T,\br_T)\) and \(\bar\bcS_1(\bv_T,\br_T)\) contain the assortments that can act as potential maximizers of
\(R(\cS\mid\tilde\bbeta^d,\bv_T,\br_T)\) over \(\bcS_0\) and \(\bcS^K\setminus\bcS_0\), respectively, with high probability. Hence, only these localized boundary candidates determine the leading uncertainty of the max-difference statistic.

The following theorem establishes the validity of the perturbation-based p-value under the null.

\begin{theorem}[Validity of the perturbation p-value]\label{thm: valid p}
For each \(\cS \in \bcS^K\), define $g_{\cS} :=    \bigl[\nabla_{\bbeta} R(\cS \mid \bbeta^*, \bv_T, \br_T)\bigr]_{\cI_*}$, and let
\[
    \sigma_{\bv_T,\br_T}
    :=
    \max_{\cS \in \bar\bcS_0(\bv_T,\br_T)\cup \bar\bcS_1(\bv_T,\br_T)}
    \|g_{\cS}\|_2.
\]
Assume that \(\sigma_{\bv_T,\br_T} > 0\) almost surely. Let the conditions of Corollary~\ref{col: unique solution decomp} hold. Fix any \(\alpha \in (0,1)\) and \(\epsilon \in (0,1)\).

Suppose that, for sufficiently large constants \(C>0\) and sufficiently small \(c>0\),
\[
T
\ge
C\frac{C_n^2 \nu^6 s^2 s_*^2 \log^2 (Tp)}{\underline\lambda^4}
\left[
\log (Tp)
\vee
\frac{\bar\mu^2 \nu^{4}s_*}
{\sigma_{\bv_T,\br_T}^2 \underline\lambda}
\right],\quad \text{and}\quad
\eta_T
\le
c\,
\frac{
    \sigma_{\bv_T,\br_T}\sqrt{\underline\lambda}
}{
    \bar\mu \nu^3 \sqrt{s_* T}
}.
\]

Let \(m \in \mathbb{N}\) and \(\kappa > 0\) satisfy
\[
m \ge
\log\!\left(\frac{2}{\alpha}\right)
\sqrt{\frac{8\hat{s}}{\pi}}
\left(\frac{\pi}{2\epsilon}\right)^{\hat{s}-1},
\qquad
\kappa
\asymp
\sigma_{\bv_T,\br_T}
\sqrt{\frac{s_*}{T\underline\lambda}} .
\]

Then, with probability at least \(1 - O(T^{-1})\) over \((\bv_T,\br_T)\),
\[
\sup_{\bbeta^* \in \cM_0(\bv_T,\br_T)}
\PP_{\bbeta^*}\!\left(p_m \le \alpha \mid \bv_T,\br_T\right)
\le \alpha + o(1).
\]
\end{theorem}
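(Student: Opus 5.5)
We will show that, under \(H_0\), the event \(\{p_m\le\alpha\}\) forces the Gaussian-coupled estimation error to be large in a way controlled by a \(\chi^2_{s_*}\) tail plus the directional miss probability \(\delta_m\). Throughout we work on the high-probability event of Corollary~\ref{col: unique solution decomp}, where \(\cI=\cI_*\) and \(\hat s=s_*\). On this event Corollary~\ref{col: error decomp debias lasso} gives
\[
[\tilde\bbeta^d-\bbeta^*]_{\cI_*}=\bSigma_{\cI_*}^{*-1/2}\xi/\sqrt{T-1}+\Rb ,
\]
where \(\xi:=-(T-1)^{-1/2}(\bSigma^*_{\cI_*})^{-1/2}[\nabla\ell_{T-1}(\bbeta^*)]_{\cI_*}\) is a normalized martingale sum. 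We first invoke the martingale Gaussian coupling of \cite{maias2025mtgcoupling}, using the conditional covariance \(\bSigma_{t'}(\hat\bbeta_{t'-1})\) and the local Hessian stability of Claim~\ref{claim: cont bd}. Together with Theorem~\ref{thm:lasso-rates}, this controls the gap between the predictable quadratic variation and \(\bSigma^*\). The result is a \(Z\sim N(0,I_{s_*})\) with \(\|\xi-Z\|_2=o_P(\kappa\sqrt{T\underline\lambda/s_*}/\sigma_{\bv_T,\br_T})\).

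Next we linearize the revenue surface. A second-order Taylor expansion of \(R(\cS\mid\cdot,\bv_T,\br_T)\) has Hessian bounded by \(\bar\mu\nu^2\). Combining it with the remainder bound on \(\Rb\), the weak-tail term \(\eta_T\), the rate \eqref{eq: l1 rate improved}, and the bound \(\|\widehat\bTheta-(T-1)^{-1}\bSigma^{*-1}_{\cI_*}\|\) obtained from the same Hessian stability argument gives, uniformly over \(\cS\),
\[
\hat R_{T,\cS}=R^*_{T,\cS}+ g_{\cS}^\top\widehat\bTheta^{1/2}Z+\Delta_{\cS},\qquad \hat g_{\cS}=g_{\cS}+\Delta'_{\cS},
\]
with \(\max_{\cS}|\Delta_{\cS}|\) and the induced perturbation error \(\max_{\cS}\cU\|\Delta'_{\cS}\|\|\widehat\bTheta^{1/2}\|\) (for \(\cU=O_P(\sqrt{s_*})\)) both \(o(\kappa)\). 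The conditions on \(T\) and \(\eta_T\) in the statement are set up precisely for this step. Write \(Z=\|Z\|\,\theta\) with \(\theta\) uniform on the sphere, independent of \(\|Z\|^2\sim\chi^2_{s_*}\).

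The core deterministic step is localization followed by the perturbation inequality. By the definitions \eqref{eq: S_0 max cand}--\eqref{eq: S_1 max cand} and the uniform error bound above, the maxima of \(\hat R\) and of the perturbed surface over \(\bcS_0\) and \(\bcS_0^c\) are attained in \(\bar\bcS_0\) and \(\bar\bcS_1\) with probability \(1-O(T^{-1})\). The constant \(18\) is what absorbs the error. Under \(H_0\), \(\max_{\bcS_0}R^*-\max_{\bcS_0^c}R^*\ge0\). Take \(a=\|Z\|\) and the direction \(-\theta\). Then \(\hat R_{T,\cS}+a\,g_{\cS}^\top\widehat\bTheta^{1/2}(-\theta)=R^*_{T,\cS}+\Delta_{\cS}\), so the perturbed max-difference is at least \(-2\max|\Delta_\cS|\ge-\kappa/2\). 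If some sampled \(\zeta_i\) satisfies \(\|\zeta_i+\theta\|\le\epsilon\), then replacing \(-\theta\) by \(\zeta_i\) changes every localized term by at most \(a\,\sigma_{\bv_T,\br_T}\|\widehat\bTheta^{1/2}\|\epsilon\). That change must be \(\le\kappa/2\), which dictates the scaling \(\kappa\asymp\sigma\sqrt{s_*/(T\underline\lambda)}\). We expect matching the \(\epsilon\)-dependence to be delicate. The fallback is to rescale by allowing \(a\) slightly above \(\|Z\|\), i.e.\ \(a=(1+O(\epsilon))\|Z\|\), and then to absorb the \(\chi^2\) shift using anti-concentration of \(\chi^2_{s_*}\) together with the anti-concentration for differences of maxima of \cite{belloni2024anticon}. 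Granting this, \(\cU_T\le\|Z\|(1+o(1))\) on that event. The probability that no \(\zeta_i\) lands in the \(\epsilon\)-cap around \(-\theta\) is at most \((1-\text{cap}(\epsilon))^m\le\delta_m\). Here the cap measure is lower bounded by \(\sqrt{\pi/(8s_*)}(2\epsilon/\pi)^{s_*-1}\), which is exactly the expression in \eqref{eq: delta_m}.

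Finally,
\[
\PP(p_m\le\alpha)\le\PP\bigl(\bar F_{\chi^2_{s_*}}(\cU_T^2)\le\alpha-\delta_m\bigr)\le\PP\bigl(\bar F(\|Z\|^2(1+o(1)))\le\alpha-\delta_m\bigr)+\delta_m+o(1),
\]
which is \(\le\alpha+o(1)\) by continuity of the \(\chi^2\) law. This holds uniformly over \(\cM_0\), because every bound above depends on \(\bbeta^*\) only through the constants in the assumptions. The choice of \(m\) guarantees \(\delta_m\le\alpha/2\), so \(p_m\) is nondegenerate. We expect the main obstacle to be the martingale Gaussian coupling with adaptively chosen assortments, together with keeping every error \(o(\kappa)\) uniformly over the localized classes rather than over all of \(\bcS^K\).
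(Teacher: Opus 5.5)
Your proposal is correct and follows the paper's proof essentially step for step: a martingale Gaussian coupling of the normalized score on $\cI_*$, the debiased linearization of $\hat R_{T,\cS}$, localization of both maxima to $\bar\bcS_0(\bv_T,\br_T)$ and $\bar\bcS_1(\bv_T,\br_T)$, radius $\|Z\|_2$ along the sampled direction nearest the error-cancelling one, and a union bound giving $(\alpha-\delta_m)+\delta_m+o(1)$. Three corrections. First, the $\epsilon$-dependence is not delicate: for fixed $\epsilon<1$ the directional error $\lesssim\epsilon\,\sigma_{\bv_T,\br_T}\sqrt{s_*/(T\underline\lambda)}$ is absorbed into the constant in $\kappa$, as in the paper, which gives $\cU_T\le\|Z\|_2$ with no inflation. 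Drop your fallback, because enlarging the radius by a fixed factor $1+O(\epsilon)$ would shift the $\chi^2_{s_*}$ tail by a non-vanishing amount and break size control. Second, only small constant fractions of $\kappa$ are needed, so the coupling error need only be $\le c\sqrt{s_*}$, not $o_P(1)$. Third, the $c\kappa$-level expansion error holds only on the localized classes (where $\|g_{\cS}\|_2\le\sigma_{\bv_T,\br_T}$); the localization itself uses the coarse $9\nu s_*\bar\mu\sqrt{\log T/(T\underline\lambda)}$ bound over all of $\bcS^K$.
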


The proof is given in Section~\ref{sec: proof thm type I}.

\begin{remark}\label{rmk:kappa-choice}
Under Assumption~\ref{asp: abs cont of r}, \(\sigma_{\bv_T,\br_T}>0\) almost surely provided that there exists an \(\cS \in \bar\bcS_0(\bv_T,\br_T)\cup \bar\bcS_1(\bv_T,\br_T)\)
 with \([\bv_{Tj}]_{\cI_*}\neq 0\) for some \(j\in\cS\).
The threshold \(\kappa\) absorbs higher-order errors from the revenue expansion and the remainder in the decomposition of \(\tilde\bbeta^d\). Its theoretically sufficient order depends on the local scale \(\sigma_{\bv_T,\br_T}\) and the minimal eigenvalue of \(\bSigma^*\). When \(\bSigma^*\) is well conditioned and the localized candidate set is moderate, \(\sigma_{\bv_T,\br_T}^2\) is typically of order \(\underline\lambda\). In this regime, Theorem~\ref{thm: valid p} requires
\(\kappa \asymp \sqrt{\hat{s}/T}\) for fixed \(\epsilon\), which is directly
implementable once the selected support is obtained. Thus, \(\kappa\) should
be viewed as a conservative theoretical margin rather than a sharp calibration
parameter. In the simulation studies, we choose
\[
    \kappa
    =
    C_\kappa
    \sqrt{\frac{\hat{s}}{T}}\,
    \epsilon,
\]
and find that the performance is robust even for very small values of
\(C_\kappa\).

The number of sampled directions \(m\) depends on the selected support size \(\hat{s}=|\cI|\). Under the stated conditions, \(\hat{s}=s_*\) with high probability. Since \(s_*\) is small relative to the ambient dimension in sparse high-dimensional MNL models, the required number of directions remains tractable when \(s_*\) is bounded or grows slowly.
\end{remark}
 Then we have the following result on the asymptotic validity and power of the proposed test. The proof is in Section~\ref{sec: proof thm valid powerful test}.
\begin{theorem}\label{thm: valid powerful test}
Under the conditions of Theorem~\ref{thm: valid p}, for each fixed
\(\alpha\in(0,1)\),
\begin{equation}\label{eq: type I asymp}
\sup_{\bbeta^*\in\cM_0(\bv_T,\br_T)}
\PP_{\bbeta^*}\big(\text{reject }H_0\mid\bv_T,\br_T\big)
\le
\alpha+o(1)
\end{equation}
with probability at least \(1-O(T^{-1})\) over the randomness in
\((\bv_T,\br_T)\).

Moreover, with probability at least \(1-O(T^{-1})\) over the randomness in
\((\bv_T,\br_T)\), for any
\(\bbeta^*\in\cM_1(\bv_T,\br_T)\) satisfying
\begin{equation}\label{eq: signal strength}
\sigma_{\bv_T,\br_T}^{-1}
\left(
\max_{\cS\in\bcS_0} R_{T,\cS}^*
-
\max_{\cS\notin\bcS_0} R_{T,\cS}^*
\right)
\ll
-
\sqrt{\frac{s_*}{T\underline\lambda}},
\end{equation}
we have
\begin{equation}\label{eq: power}
\PP_{\bbeta^*}\big(\text{reject }H_0\mid\bv_T,\br_T\big)
\to1 .
\end{equation}
\end{theorem}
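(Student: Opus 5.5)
The size statement \eqref{eq: type I asymp} follows immediately from Theorem~\ref{thm: valid p}. The test rejects exactly when \(p_m\le\alpha\), so on the same \((\bv_T,\br_T)\)-event of probability \(1-O(T^{-1})\) the bound \(\sup_{\bbeta^*\in\cM_0}\PP_{\bbeta^*}(p_m\le\alpha\mid\bv_T,\br_T)\le\alpha+o(1)\) transfers verbatim. The work is therefore in the power statement \eqref{eq: power}. The plan is to show that, under \eqref{eq: signal strength}, \(\cU_T\) diverges in probability at a rate making \(\bar F_{\chi^2_{\hat s}}(\cU_T^2)\to0\). We also need \(\delta_m\le\alpha/2\), which holds by the choice of \(m\): the lower bound on \(m\) gives \(\delta_m\le\alpha/2\) exactly. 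Rejection then happens whenever \(\bar F_{\chi^2_{\hat s}}(\cU_T^2)<\alpha/2\). Since \(\hat s=s_*\) with probability \(1-O(T^{-1})\) by Corollary~\ref{col: unique solution decomp}, it suffices to show \(\cU_T^2-s_*\gg\sqrt{s_*}\), or more simply \(\cU_T\gg\sqrt{s_*}\vee 1\) in probability.

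For the divergence, write \(D^*:=\max_{\bcS_0}R^*_{T,\cS}-\max_{\bcS_0^c}R^*_{T,\cS}<0\) and take \(a<\cU_T\) arbitrary. For every sampled direction \(\zeta_i\) the perturbed max-difference is \(<-\kappa\); we need the contrapositive bound. First, control the plug-in surface on the localized sets. Combine the first-order expansion \(\hat R_{T,\cS}-R^*_{T,\cS}=g_\cS^\top([\tilde\bbeta^d]_{\cI_*}-[\bbeta^*]_{\cI_*})+\text{remainder}\), the decomposition in Corollary~\ref{col: error decomp debias lasso}, and a martingale concentration bound for \((T-1)^{-1}[\nabla\ell_{T-1}(\bbeta^*)]_{\cI_*}\). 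Together these give, with probability tending to one,
\[
\Big|\max_{\bcS_0}\hat R_{T,\cS}-\max_{\bcS_0^c}\hat R_{T,\cS}-D^*\Big|\lesssim \sigma_{\bv_T,\br_T}\sqrt{\tfrac{s_*}{T\underline\lambda}}\cdot \omega_T,
\]
with \(\omega_T\) growing slowly (e.g.\ \(\sqrt{\log T}\)). This is the same localization used in the proof of Theorem~\ref{thm: valid p}, where the \(18\nu s_*\bar\mu\sqrt{\log T/(T\underline\lambda)}\) window ensures that the maximizers of \(\hat R\) lie in \(\bar\bcS_0\cup\bar\bcS_1\). Second, bound the perturbation. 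Each term \(a\,\hat g_\cS^\top\widehat\bTheta^{1/2}\zeta_i\) satisfies \(|\cdot|\le a\|\hat g_\cS\|_2\|\widehat\bTheta\|_2^{1/2}\). Restricted to the localized candidates, \(\|\hat g_\cS\|_2\le 2\sigma_{\bv_T,\br_T}\) by the \(\ell_1\) rate and Lipschitzness of the revenue gradient. Also, \(\|\widehat\bTheta\|_2\lesssim 1/(T\underline\lambda)\) on the high-probability event, since \(\nabla^2\ell_{T-1}\) concentrates around \((T-1)\bSigma^*_{\cI_*}\), as in Corollary~\ref{col: error decomp debias lasso}. Hence the perturbed max-difference is at most \(D^*+O_p(\sigma\sqrt{s_*/(T\underline\lambda)}\,\omega_T)+4a\sigma/\sqrt{T\underline\lambda}\).

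Forcing this to be \(\ge-\kappa\) requires
\[
a\gtrsim \frac{|D^*|\sqrt{T\underline\lambda}}{\sigma_{\bv_T,\br_T}}-\sqrt{s_*}\,\omega_T-\sqrt{s_*},
\]
and \eqref{eq: signal strength} makes the right side \(\gg\sqrt{s_*}\omega_T\), choosing \(\omega_T\) slow enough. Then \(\cU_T^2/s_*\to\infty\), and by the chi-square tail bound (Laurent--Massart) \(\bar F_{\chi^2_{s_*}}(\cU_T^2)\to0\).

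The main obstacle is a step in the perturbation bound. For \(a\) of the order considered, the maximizers of the perturbed surface \(\hat R_{T,\cS}+a\hat g_\cS^\top\widehat\bTheta^{1/2}\zeta_i\) may leave the localized sets \(\bar\bcS_0\cup\bar\bcS_1\). The bound \(\|\hat g_\cS\|_2\le2\sigma\) is then not directly available. I would handle this by using the one-sided structure of the difference. We only need an upper bound on \(\max_{\bcS_0}(\cdot)\) and a lower bound on \(\max_{\bcS_0^c}(\cdot)\). The lower bound uses only a fixed alternative maximizer in \(\bar\bcS_1\). For the upper bound, I would apply a global gradient bound \(\|\hat g_\cS\|_2\lesssim K\bar\mu\nu\sqrt{s_*}\) to non-localized \(\cS\). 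Such \(\cS\) lose at least the localization margin in \(\hat R\), which absorbs the perturbation as long as \(a\) stays below the threshold above; that restriction suffices for the argument. The sample-size condition of Theorem~\ref{thm: valid p} (its \(\bar\mu^2\nu^4s_*/\sigma^2\) term) is what makes this margin comparison work.
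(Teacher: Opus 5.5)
Your handling of the size claim, and your reduction of power to a lower bound on $\cU_T$, are fine. The divergence step, however, has a genuine gap.

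Your upper bound $D^*+O_p(\sigma_{\bv_T,\br_T}\sqrt{s_*/(T\underline\lambda)}\,\omega_T)+4a\sigma_{\bv_T,\br_T}/\sqrt{T\underline\lambda}$ must hold for every radius $a$ you want to exclude, i.e.\ up to $a^\dagger\asymp|D^*|\sqrt{T\underline\lambda}/\sigma_{\bv_T,\br_T}$. Your repair for null assortments outside $\bar\bcS_0(\bv_T,\br_T)$ does not give this.

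\begin{itemize}
\item Such an $\cS$ trails the localized null maximizer in $\hat R$ by a margin of order $\nu s_*\bar\mu\sqrt{\log T/(T\underline\lambda)}$.
\item The only available bound on its perturbation is $a\max_{\cS\in\bcS^K}\|\widehat\bTheta^{1/2}\hat g_\cS\|_2\lesssim a\,\nu\bar\mu\sqrt{s_*/(T\underline\lambda)}$.
\item So the margin absorbs the perturbation only for $a\lesssim\sqrt{s_*\log T}$.
\item At $a\approx a^\dagger$ this bound is of order $|D^*|\nu\bar\mu\sqrt{s_*}/\sigma_{\bv_T,\br_T}\ge|D^*|/4$. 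Since \eqref{eq: signal strength} only bounds $|D^*|$ from below, this can exceed the margin by an arbitrary factor (e.g.\ when $|D^*|$ is of constant order).
\end{itemize}

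In fact the intermediate claim $\cU_T\gtrsim a^\dagger$ can fail. Take a null assortment just outside $\bar\bcS_0$ whose gradient norm is close to $4\nu\bar\mu\sqrt{s_*}\gg\sigma_{\bv_T,\br_T}$. Along a nearly aligned sampled direction it can reach the boundary at a radius of order $(|D^*|+\text{its revenue deficit})\sqrt{T\underline\lambda}/\|g_\cS\|_2$, far below $a^\dagger$.

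The $\bar\mu^2\nu^4s_*/\sigma^2_{\bv_T,\br_T}$ part of the sample-size condition does not rescue this comparison. It only makes $\|\hat g_\cS-g_\cS\|_2$ small relative to $\sigma_{\bv_T,\br_T}$ (your $\|\hat g_\cS\|_2\le 2\sigma_{\bv_T,\br_T}$ step) and keeps the expansion errors below the $\kappa$ scale.

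The missing idea is that you never need $\cU_T$ to grow like $a^\dagger$. Since $\delta_m\le\alpha/2$ and $\hat s=s_*$, the event $p_m>\alpha$ forces $\cU_T<\tilde a:=\{F^{-1}_{\chi^2_{s_*}}(1-\alpha/2)\}^{1/2}=O(\sqrt{s_*})$. Because $\cU_T$ is an infimum, this gives some $b\in[0,\tilde a)$ and some $i$ with perturbed max-difference $\ge-\kappa$. So it suffices to control $b\in[0,\tilde a]$, which is what the paper does.

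\begin{itemize}
\item For such $b$, $b\max_{\cS\in\bcS^K}\|\widehat\bTheta^{1/2}\hat g_\cS\|_2\lesssim\nu\bar\mu s_*/\sqrt{T\underline\lambda}$. This fits inside the $\sqrt{\log T}$-inflated window defining $\bar\bcS_0,\bar\bcS_1$.
\item Hence, for every such $b$ and every direction, the perturbed maxima are attained in the localized sets, and no one-sided argument is needed.
\item On those sets the perturbation is $\lesssim\sigma_{\bv_T,\br_T}\sqrt{s_*/(T\underline\lambda)}$.
\item By Lemma~\ref{lm: MTG coupling} and the revenue expansion, $\hat R_{T,\cS}-R^*_{T,\cS}$ differs by at most $c\,\sigma_{\bv_T,\br_T}\sqrt{s_*/(T\underline\lambda)}$ from a Gaussian term bounded by $\sigma_{\bv_T,\br_T}\|\bxi\|_2/\sqrt{(T-1)\underline\lambda}$.
\item Both quantities are $o_p(|D^*|)$ under \eqref{eq: signal strength}, so the max-difference stays below $-\kappa$.
\end{itemize}

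Equivalently, you can keep your write-up by capping $a$ at $\sqrt{s_*}\,\omega_T$, with $\omega_T\to\infty$ more slowly than both $\sqrt{\log T}$ and the signal ratio $|D^*|\sqrt{T\underline\lambda}/(\sigma_{\bv_T,\br_T}\sqrt{s_*})$. Then $\omega_T$ must come from tightness of $\bS$ (e.g.\ via the coupling), not from a high-probability martingale bound carrying a $\sqrt{\log T}$ factor, because \eqref{eq: signal strength} specifies no rate.
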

\begin{remark}\label{rmk:localized-power}
The power condition in Theorem~\ref{thm: valid powerful test} is localized through
\[
    \sigma_{\bv_T,\br_T}
    :=
    \max_{\cS\in\bar\bcS_0(\bv_T,\br_T)\cup\bar\bcS_1(\bv_T,\br_T)}
    \|g_{\cS}\|_2 ,
\]
which is computed only over near-optimal assortments close to the null--alternative boundary. The resulting signal scale is
\[
    \sigma_{\bv_T,\br_T}
    \sqrt{\frac{s_*}{T\underline\lambda}} .
\]
By contrast, a uniform error bound approach would typically calibrate the leading revenue error over all \(\cS\in\bcS^K\), leading to a scale of the form
\[
    \sigma_{\rm glob}
    \sqrt{\frac{\log|\bcS^K|}{T\underline\lambda}},
    \qquad
    \sigma_{\rm glob}:=\max_{\cS\in\bcS^K}\|g_{\cS}\|_2 .
\]
Thus the power gain is largest when
\[
    \sigma_{\bv_T,\br_T}\sqrt{s_*}
    \ll
    \sigma_{\rm glob}\sqrt{\log|\bcS^K|},
\]
which can occur either because the effective support dimension \(s_*\) is small relative to the combinatorial complexity \(\log|\bcS^K|\), or because the localized gradient scale \(\sigma_{\bv_T,\br_T}\) is much smaller than the global scale \(\sigma_{\rm glob}\). For the full \(K\)-subset class, \(\log|\bcS^K|\asymp K\log(en/K)\), so this comparison remains relevant when \(K\) grows.
\end{remark}


\section{Numerical Results}\label{sec:numerical}

We compare the proposed perturbation test with a uniform error bound (UEB) baseline for the three testing problems in Examples~\ref{exm: subset test}--\ref{exm: feature test} of Section~\ref{sec:setup}. The reported size and power are computed from a held-out set of Monte Carlo replications that is not used for tuning-parameter selection; additional diagnostics are reported in Appendix~\ref{app:numerical}. Throughout this section, the nominal level is \(\alpha=0.05\). We use \(n=20\) products, ambient dimension \(p=500\), sparsity level \(s=s_*\in\{3,4,5\}\), and cardinality constraint \(K=3\). For simplicity of the numerical setup, we impose \(\mathcal I_0=\mathcal I_*\).
\subsection{Simulation setup}\label{sec:sim-setup}
 We first describe the common data-generating mechanism and optimization class used across the three examples. The feasible class is the full collection of \(K\)-subsets,
\[
\bcS^K=\{\cS\subseteq[n]:|\cS|=K\},
\]
so that \(|\bcS^K|=\binom{20}{3}=1140\) candidate assortments. The active support of \(\bbeta^*\) is set to \(\cI_0=[s]\), with alternating-sign coefficients of decreasing magnitude,
\[
    [\bbeta^*]_{\cI_0}
    =
    1.5 \cdot\,[(1,-0.8,0.6,-0.5,0.4)^{\top}]_{1:s}.
\]
At each round \(t\), the contextual feature vectors \(\bv_{tj}\) are generated independently from \(\cN(0,\Ib_p/3)\), with entries truncated to \([-1,1]\). Item revenues are generated as \(r_{tj}\overset{\rm i.i.d.}{\sim}\cN(6.5,1)\), truncated below at \(0.01\). The penalty parameter for \(\hat\bbeta_{t-1}\) in~\eqref{eq: theta hat} and the thresholding parameter in the perturbation test are chosen as
\[
    \lambda_{t-1}
    =
    C_\lambda\,\{\log(Tp)+\sqrt{t\log(Tp)}\}, \quad
   \kappa
    =
    C_\kappa
    \sqrt{\frac{\hat{s}}{T}}\,
    \epsilon,
\]
where \(C_\lambda\) and \(C_\kappa\) are tuning constants and \(\hat{s}=|\cI|\) is the
selected support size.
Here the threshold is written in the directly implementable form: the constant
\(C_\kappa\) absorbs the local-scale factor \(\sigma_{\bv_T,\br_T}/\sqrt{\underline\lambda}\)
that enters the theoretical order of \(\kappa\) in Section~\ref{sec:theory},
because \(\sigma_{\bv_T,\br_T}\) and \(\underline\lambda\) are not available in
practice and are therefore calibrated jointly through \(C_\kappa\) rather than estimated separately.

The four tuning parameters \((C_\lambda,\epsilon,C_\kappa,\delta_m)\) are selected using Monte Carlo replications disjoint from those used for the final evaluation. Specifically, we use a three-stage procedure consisting of an initial calibration grid with \(N=50\) replications, a confirmation step with \(N=200\) replications applied to the leading candidate configurations, and a final evaluation set with \(N=500\) replications reserved exclusively for reporting. The selected configuration is
\begin{equation}\label{eq:selected-params}
(C_\lambda, \epsilon, C_\kappa, \delta_m)
\;=\;
(0.40,\ 0.50,\ 10^{-4},\ 0.002).
\end{equation}
The number of random directions \(m\) is determined from \(\delta_m\) through the definition in~\eqref{eq: delta_m}, and the resulting value is capped at \(50{,}000\) in the numerical implementation.
We use the noninformative pilot \(\hat\bbeta_0=\boldsymbol{0}\). The estimator \(\hat\bbeta_{t-1}\) is updated at every time \(t\), and the selected support \(\cI\) used in the debiasing and perturbation steps is obtained from the terminal estimator.

We use the UEB procedure as the benchmark. This baseline is adapted from the maximal-perturbation approach of \cite{shen2023combinfassort} and calibrates the Gaussian analogue of the maximal revenue error over the full candidate assortment class. For comparability, the baseline uses the same adaptive history, terminal context \((\bv_T,\br_T)\), and selected support \(\cI\) as our proposed method. It computes the plug-in revenues at the unregularized maximum likelihood estimator refitted on \(\cI\), and approximates the maximal perturbation statistic
\begin{equation}\label{eq:ueb-stat}
W_b
\;=\;
\max_{\cS \in \bcS^K}
\bigl|\hat g_\cS^{\!\top} \widehat\bTheta^{1/2} Z_b\bigr|,
\qquad Z_b \overset{\rm i.i.d.}{\sim} \cN(0, \Ib_{\hat{s}}),
\end{equation}
where \(\hat g_{\cS}\) and \(\widehat\bTheta\) are defined in~\eqref{eq: rev and grad} and~\eqref{eq: est fisher mat}, respectively. In all reported evaluations, this Gaussian maximum is simulated using \(B=1000\) draws per replication.
Let \(\hat\bbeta_{\cI}^{\rm mle}\) denote the unregularized maximum likelihood
estimator refitted on the selected support \(\cI\), with coordinates outside
\(\cI\) set to zero, and define
\[
\widehat R_{T,\cS}^{\rm U}
:=
R(\cS\mid \hat\bbeta_{\cI}^{\rm mle},\bv_T,\br_T),
\]
the plug-in revenue used by the UEB confidence set, in contrast to the debiased
plug-in revenue \(\hat R_{T,\cS}\) in~\eqref{eq: rev and grad} used by the
proposed test.
Let \(C_W\) denote the empirical \((1-\alpha)\)-quantile of \(\{W_b\}_{b=1}^B\). The UEB baseline constructs the uniform revenue confidence set
\begin{equation}\label{eq:ueb-band}
\cC_\alpha =
\left\{
\cS \in \bcS^K :
\widehat R_{T,\cS}^{\rm U}
\ge
\max_{\cS' \in \bcS^K}\widehat R_{T,\cS'}^{\rm U}
-\frac{2C_W}{\sqrt{T-1}}
\right\}.
\end{equation}
It rejects \(H_0:\cS_T^*\cap\bcS_0\neq\emptyset\) if and only if \(\cC_\alpha\cap\bcS_0=\emptyset\). For each example, the UEB rejection rule is evaluated on the same structural target \(\bcS_0\) and the same terminal contexts as the proposed test.

The empirical rejection rate under \(H_0\) depends on the revenue gap
\[
\Delta^*
:=
\max_{\cS \in \bcS_0} R^*_{T,\cS}
-
\max_{\cS \notin \bcS_0} R^*_{T,\cS},
\]
which is nonnegative under the null hypothesis in~\eqref{eq: hypo test}. We therefore evaluate size at the least favorable boundary \(\Delta^*=0\). Specifically, we draw a terminal context satisfying \(\cS_T^*(\bbeta^*)\cap\bcS_0\neq\emptyset\) and adjust one terminal revenue coordinate by bisection until \(\Delta^*=0\). Under \(H_1\), we use unmodified terminal contexts satisfying \(\cS_T^*(\bbeta^*)\cap\bcS_0=\emptyset\). Each cell uses \(N=500\) independent replications over horizons \(T\in\{200,300,400,500,700,1000,1500,2000\}\). 

We report empirical size and power for Examples~\ref{exm: subset test}--\ref{exm: feature test} of Section~\ref{sec:setup}. In the size panels, the vertical axis is truncated at \(0.10\) to focus on the nominal region; values exceeding this range occur only at the smallest horizons. Across the \(s\in\{3,4,5\}\) configurations, the finite-sample size distortion at very small horizons decreases rapidly as \(T\) increases, consistent with the asymptotic nature of the calibration.
The UEB baseline is substantially more conservative across configurations, making the power comparison especially pronounced. We summarize the results for each example below.

\subsection{Example~\ref{exm: subset test}: Product inclusion test}\label{sec:results-ex1}

We set \(\cA=\{1,2\}\) and test whether both products belong to a revenue-maximizing assortment. Figure~\ref{fig:ex1} compares the empirical size and power of the proposed test with those of the UEB baseline. The proposed test is oversized at the smallest horizons, but its empirical size decreases rapidly and falls below the nominal level as \(T\) grows.

The power advantage over the UEB baseline is substantial. Across \(s\in\{3,4,5\}\), the proposed test has high power already at small to moderate horizons and approaches one as \(T\) increases. In contrast, the UEB baseline remains much less powerful throughout the horizon range while being highly conservative under the null. This contrast is consistent with the theoretical motivation: the UEB procedure controls revenue error uniformly over the full candidate class, whereas the proposed perturbation test targets the max-difference boundary relevant to the subset-inclusion hypothesis.

\begin{figure}[htbp]
\centering
\includegraphics[width=\textwidth]{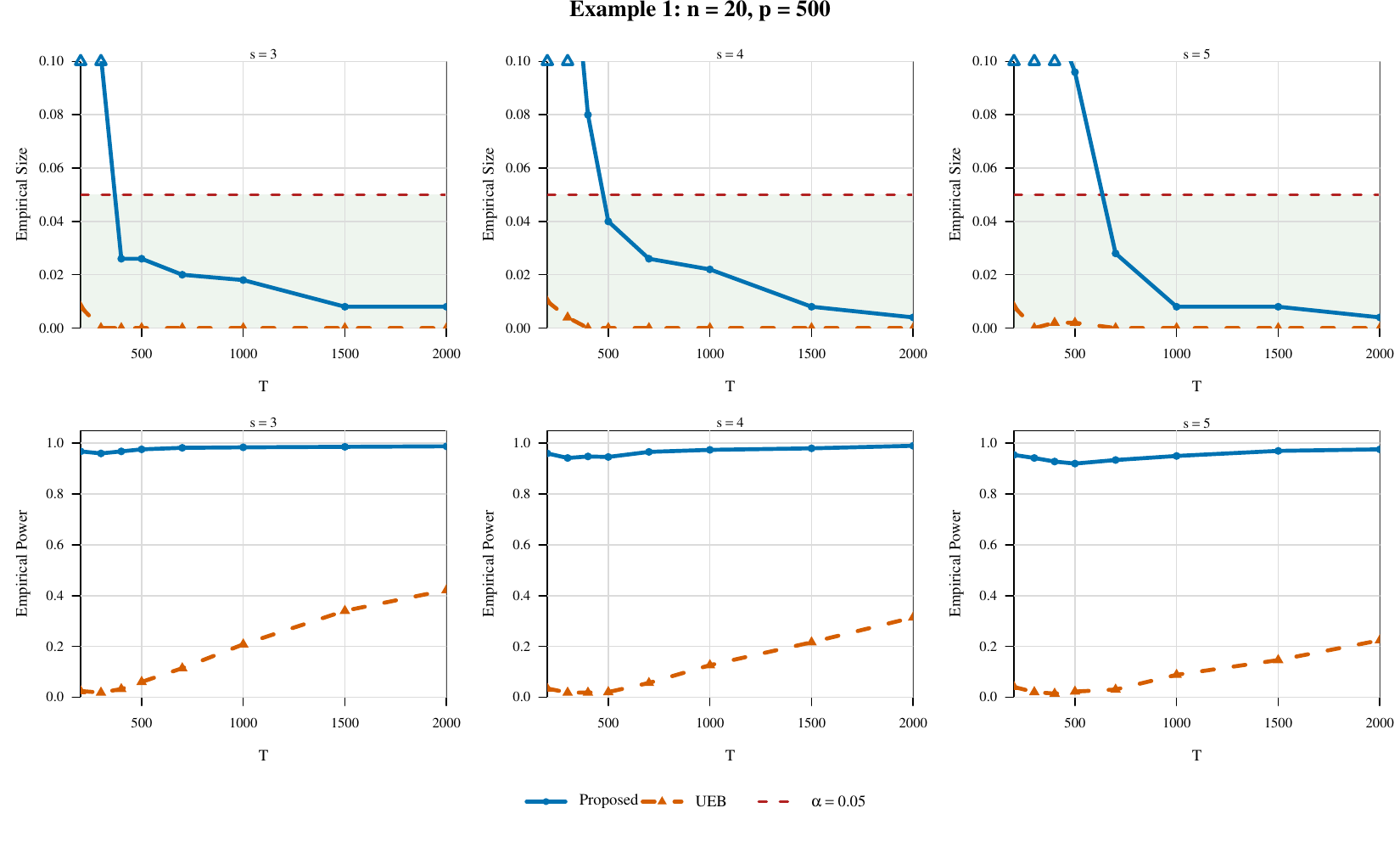}
\caption{Example~\ref{exm: subset test}. Top row: empirical size at the least-favorable boundary \(\Delta^*=0\). Bottom row: empirical power. The proposed test falls below the nominal level as \(T\) grows and attains substantially higher power than the UEB baseline across all sparsity levels. The size panels are truncated at \(0.10\) to emphasize the nominal region; values exceeding this range occur only at the smallest horizons.}
\label{fig:ex1}
\end{figure}

\subsection{Example~\ref{exm: category prop test}: Category proportion test}\label{sec:results-ex2}

We partition products into a focal category
\(\cA=\{1,\ldots,\lceil n/2\rceil\}\) and its complement, and test
whether the optimal assortment contains a strict majority drawn from
\(\cA\). With \(K=3\), the null requires at least two of the three selected
products to lie in \(\cA\), yielding \(|\bcS_0|=570\) out of \(|\bcS^K|=1140\)
when \(n=20\). This balanced partition produces small revenue gaps \(\Delta^*\) between the best null-feasible and alternative-feasible assortments, making this the statistically hardest of the three examples.

Figure~\ref{fig:ex2} shows that the proposed test again controls size after the initial small-horizon regime, with empirical rejection rates falling below the nominal level as \(T\) increases. The power comparison is especially pronounced in this example. Across all sparsity levels \(s\in\{3,4,5\}\), the proposed test gains power steadily with \(T\), reaching high power by \(T=2000\). In contrast, the UEB baseline is nearly powerless throughout the horizon range; its power curves are visually close to zero in all panels. This example is therefore the most informative comparison between the directional perturbation construction and uniform error control: when the structural null divides the candidate class into two balanced parts and the relevant revenue gap is local, the proposed test delivers a large power improvement while remaining empirically conservative under \(H_0\).

\begin{figure}[htbp]
\centering
\includegraphics[width=\textwidth]{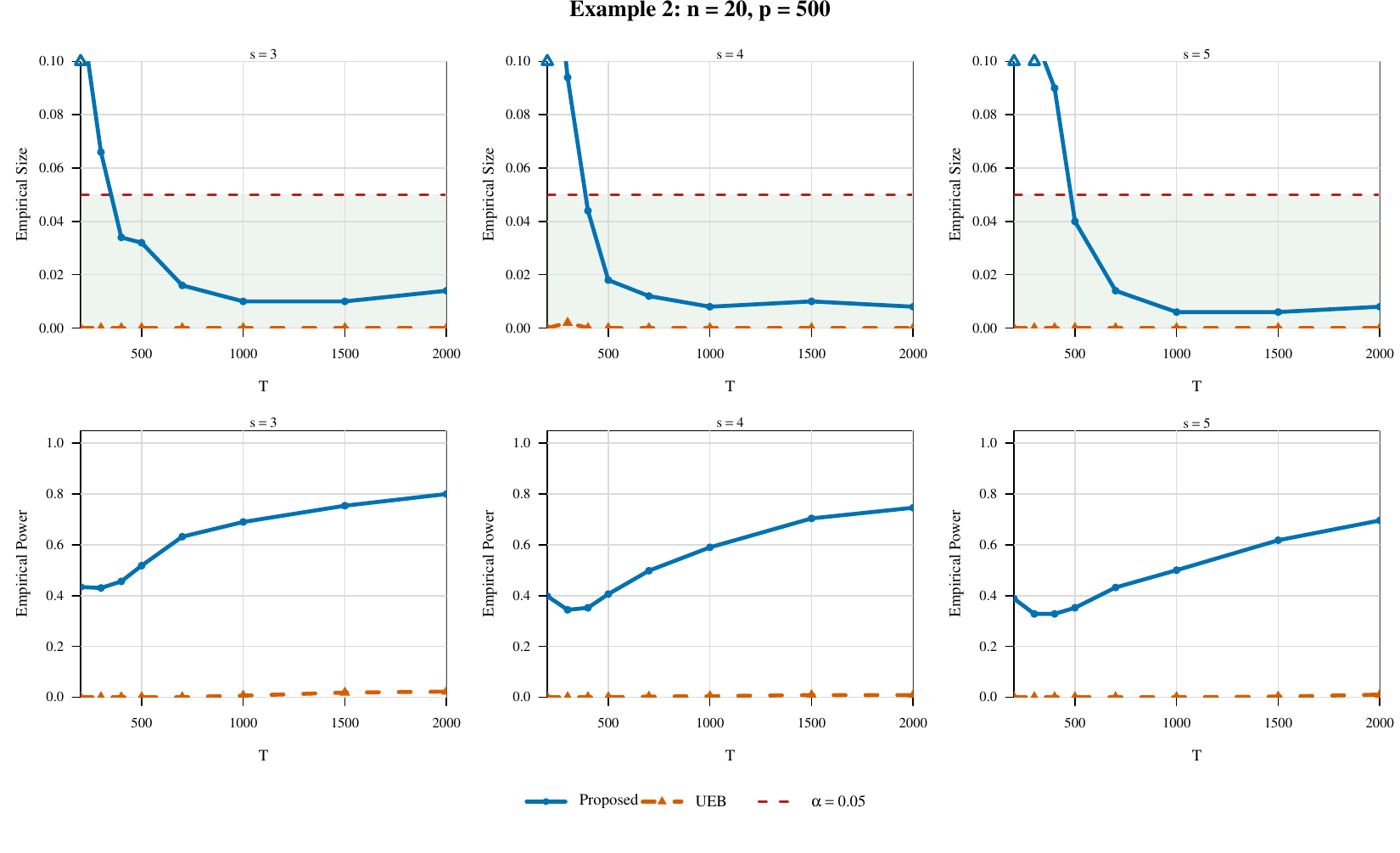}
\caption{Example~\ref{exm: category prop test}. Layout and line styles are as in Figure~\ref{fig:ex1}. The proposed test falls below the nominal size as \(T\) grows and maintains substantial power in this near-boundary setting. The UEB baseline is highly conservative and has power close to zero across all sparsity levels.}
\label{fig:ex2}
\end{figure}

\subsection{Example~\ref{exm: feature test}: Feature test}\label{sec:results-ex3}

Example~\ref{exm: feature test} uses a fixed operational feature screen. We pre-specify the feature set
\[
\cV
=
\left\{
x\in\RR^{p+1}:
\max_{k\in\{6,7\}} |x_k|\le 0.65
\right\},
\]
and keep it fixed across all \(s\in\{3,4,5\}\) configurations. The corresponding null family is
\[
\bcS_0
=
\left\{
\cS\in\bcS^K:
\max_{k\in\{6,7\}} |[v_{Tj}]_k|\le 0.65,\ \forall j\in\cS
\right\}.
\]
Thus, the test asks whether there exists a terminal revenue-maximizing assortment whose items all have moderate values on this fixed group of operational attributes. Unlike Examples~\ref{exm: subset test}--\ref{exm: category prop test}, the null class \(\bcS_0\) is context-dependent and varies across replications. 

Figure~\ref{fig:ex3} reports the size and power diagnostics. Both the proposed procedure and the UEB baseline are evaluated on the same terminal contexts and the same fixed-feature null family. Because \(\bcS_0\) is context-dependent and varies across replications, this example provides the most demanding assessment of calibration among the three, as it probes a null class that is itself random in the terminal context. The proposed test shows some size distortion at the smallest horizons, but this distortion decreases with \(T\) and its calibration improves at larger horizons. Across \(s\in\{3,4,5\}\), its power remains substantially higher than that of the UEB baseline, which is again more conservative under the null and has noticeably lower power, especially at moderate horizons.
\begin{figure}[htbp]
\centering
\includegraphics[width=\textwidth]{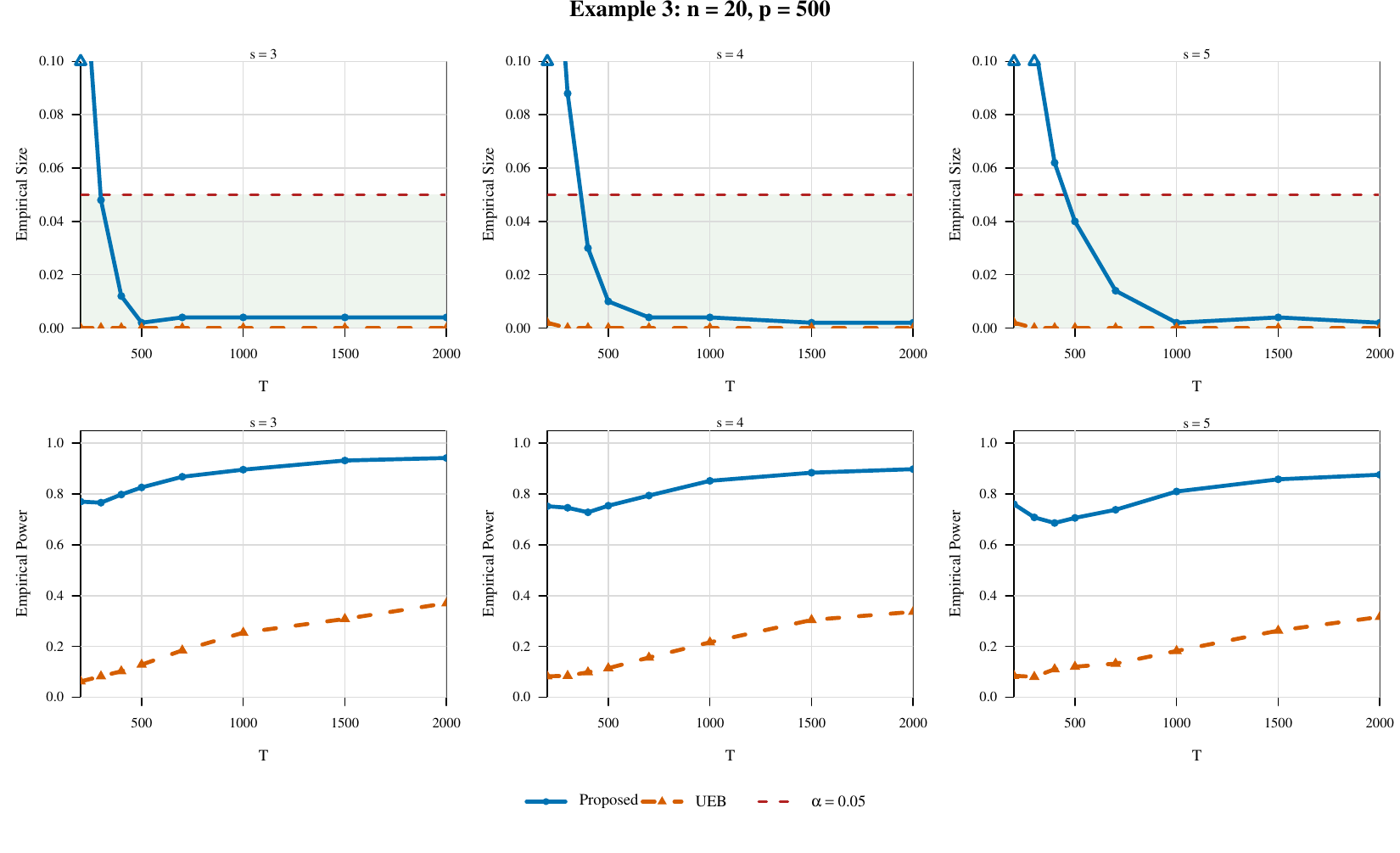}
\caption{Example~\ref{exm: feature test}. Layout and conventions are as in Figure~\ref{fig:ex1}. Here the null class is context-dependent and varies across replications. As \(T\) grows, the size distortion of the proposed test decreases and its power is substantially higher than that of the UEB baseline across all sparsity levels. The UEB baseline remains conservative under the null and has noticeably lower power, especially at moderate horizons.}
\label{fig:ex3}
\end{figure}

\subsection{Cumulative regret of the adaptive policy}
\label{sec:results-regret}

We also examine the regret performance of the adaptive data-collection policy. This experiment provides a finite-sample diagnostic for the sublinear regret guarantee in Corollary~\ref{col: Regret bound}. We use the same simulation setting as in the main experiments, with \(n=20\), \(p=500\), \(K=3\), and sparsity levels \(s\in\{3,4,5\}\). The tuning parameters are fixed at the values selected by the three-stage protocol, and the results are based on \(N=100\) independent replications.

Figure~\ref{fig:regret} reports the median cumulative regret as a function of the horizon \(T\). For all three sparsity levels, cumulative regret increases slowly with \(T\), and the curves shift upward as \(s\) increases. This pattern is consistent with the sparsity dependence and the sublinear regret growth predicted by Corollary~\ref{col: Regret bound}.

\begin{figure}[!htbp]
\centering
\includegraphics[width=0.75\textwidth]{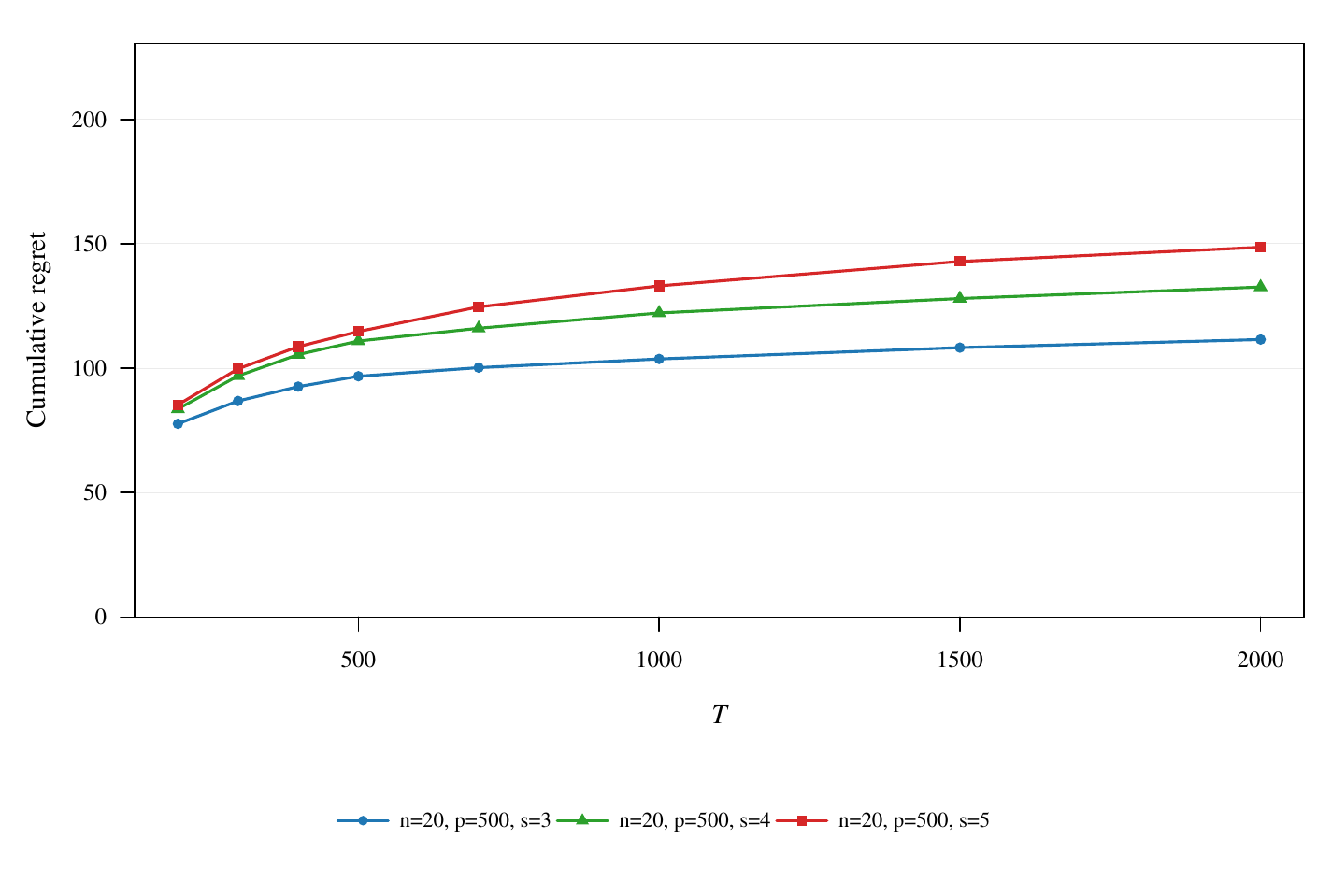}
\caption{Cumulative regret of the greedy adaptive policy with \(n=20\), \(p=500\), and \(s\in\{3,4,5\}\), based on \(N=100\) independent replications. The curves report the median cumulative regret as a function of \(T\), showing slow growth over the horizon.}
\label{fig:regret}
\end{figure}

To assess the predicted scaling more directly, Figure~\ref{fig:regret-normalized} reports the median normalized regret \(\mathrm{Regret}(T)/\sqrt{T\log(Tp)}\) computed from the same replications. The normalized curves remain bounded and slowly varying across the range of \(T\), with the same ordering by sparsity level. This behavior supports the \(\sqrt{T\log(Tp)}\)-type scaling in Corollary~\ref{col: Regret bound} and provides additional evidence that the adaptive policy operates in a sublinear-regret regime.

\begin{figure}[!htbp]
\centering
\includegraphics[width=0.75\textwidth]{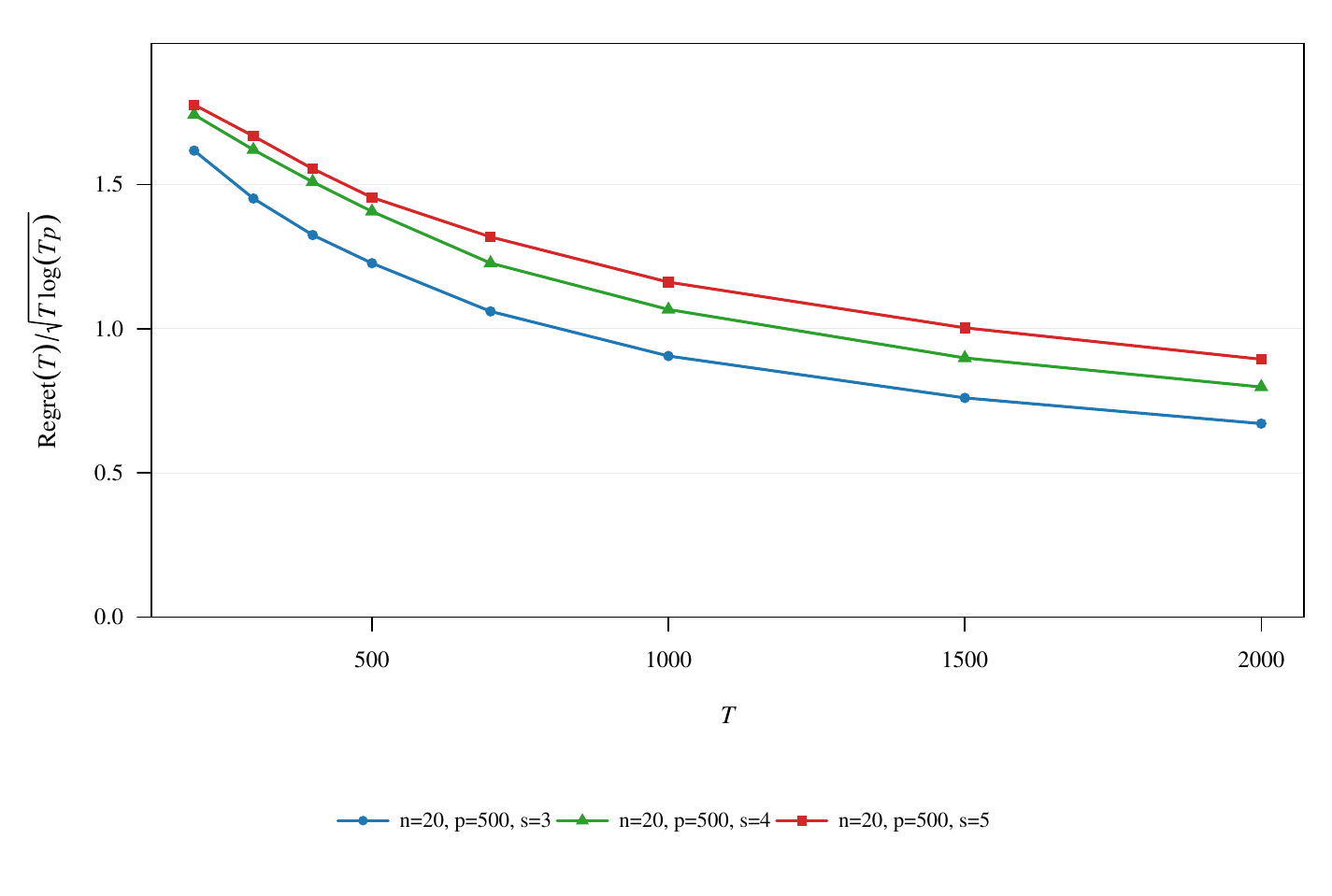}
\caption{Normalized cumulative regret of the greedy adaptive policy with \(n=20\), \(p=500\), and \(s\in\{3,4,5\}\), based on \(N=100\) independent replications. The curves report the median normalized regret \(\mathrm{Regret}(T)/\sqrt{T\log(Tp)}\) as a function of \(T\). Their bounded and slowly varying behavior supports the expected \(\sqrt{T\log(Tp)}\) scaling and the sublinear-regret guarantee in Corollary~\ref{col: Regret bound}.}
\label{fig:regret-normalized}
\end{figure}

\section{Conclusion}\label{sec: conclusion}

This paper develops a perturbation-based inferential framework for structural properties of post-learning combinatorial optimizers. In the high-dimensional contextual MNL assortment model, the inferential target is not the latent utility parameter, but whether the terminal oracle optimizer satisfies a prescribed discrete property. We formulate this problem as inference on the sign of a nonsmooth max-difference revenue functional and construct a p-value through the minimal radius of localized directional perturbations. The resulting procedure separates directional and magnitude uncertainty and avoids the conservativeness of uniform error bound methods that calibrate over the full candidate class.

On the theoretical side, we establish uniform estimation rates, effective support recovery, a debiased expansion on the selected support, and martingale Gaussian coupling for the adaptive score process. A novel use of anti-concentration for differences of Gaussian maxima controls the selection-induced Hessian variation under adaptive assortment selection. These tools yield asymptotic validity and power of the proposed p-value under a localized signal condition. The regret bound further shows that the data-collection policy supports terminal inference without relying on a separate purely exploratory phase.

The numerical experiments support the theory. Across three structurally distinct testing problems, the proposed test controls size at large horizons and delivers substantially higher power than a UEB benchmark, especially in near-boundary regimes where uniform calibration is overly conservative. These results suggest that localized perturbation of max-difference functionals provides a useful route to valid and powerful inference for irregular post-learning decisions.

\clearpage
{
\appendix
\numberwithin{equation}{section}
\numberwithin{figure}{section}
\numberwithin{table}{section}
\renewcommand{\theHsection}{supp.\arabic{section}}
\renewcommand{\theHsubsection}{supp.\arabic{section}.\arabic{subsection}}
\renewcommand{\theHequation}{supp.\arabic{section}.\arabic{equation}}
\renewcommand{\theHfigure}{supp.\arabic{section}.\arabic{figure}}
\renewcommand{\theHtable}{supp.\arabic{section}.\arabic{table}}
\renewcommand{\theHtheorem}{supp.\arabic{section}.\arabic{theorem}}
\renewcommand{\theHassumption}{supp.\arabic{section}.\arabic{assumption}}
\renewcommand{\theHdefinition}{supp.\arabic{section}.\arabic{definition}}
\renewcommand{\theHremark}{supp.\arabic{section}.\arabic{remark}}
\renewcommand{\thesection}{S.\arabic{section}}
\renewcommand{\theequation}{S.\arabic{section}.\arabic{equation}}
\renewcommand{\thefigure}{S.\arabic{section}.\arabic{figure}}
\renewcommand{\thetable}{S.\arabic{section}.\arabic{table}}
\renewcommand{\thetheorem}{S.\arabic{section}.\arabic{theorem}}
\renewcommand{\theassumption}{S.\arabic{section}.\arabic{assumption}}
\renewcommand{\thedefinition}{S.\arabic{section}.\arabic{definition}}
\renewcommand{\theremark}{S.\arabic{section}.\arabic{remark}}
\setcounter{section}{0}
\setcounter{equation}{0}
\setcounter{figure}{0}
\setcounter{table}{0}

\section{Additional Numerical Results}\label{app:numerical}

\subsection{Revenue-gap distribution under the alternative}
\label{app:h1-gap}
The power comparison in Section~\ref{sec:numerical} is most informative when the alternative is close to the max-difference boundary. To quantify this difficulty, we examine the distribution of the population revenue gap \(|\Delta^*|\) under the unadjusted \(H_1\) contexts. Throughout this subsection we fix \(n=20\) and \(p=500\), and report the gap distribution for the sparsity levels \(s = s_* \in\{3,4,5\}\) across Examples~\ref{exm: subset test}--\ref{exm: feature test}.

For each simulation cell, Figure~\ref{fig:app-h1-gap} reports the \(5\%\), \(25\%\), \(50\%\), \(75\%\), and \(95\%\) quantiles of \(|\Delta^*|\). The category-proportion test in Example~\ref{exm: category prop test} has the smallest gaps among the three examples: its quantiles are concentrated near the small-gap reference scale marked in Figure~\ref{fig:app-h1-gap}, and its lower quantiles often lie substantially closer to zero. This confirms that Example~\ref{exm: category prop test} is the near-boundary regime in which uniform calibration over the full class \(\bcS^K\) is especially conservative. The proposed perturbation test retains power in this regime because it localizes the uncertainty to the null--alternative revenue boundary rather than controlling the maximal revenue error over all candidate assortments.

By contrast, the product-inclusion and fixed-feature tests have larger typical gaps. The fixed-feature alternative in Example~\ref{exm: feature test} therefore lies at a moderate separation scale, whereas the category-proportion alternative in Example~\ref{exm: category prop test} represents the most challenging small-gap setting.

\begin{figure}[htbp]
\centering
\includegraphics[width=\textwidth]{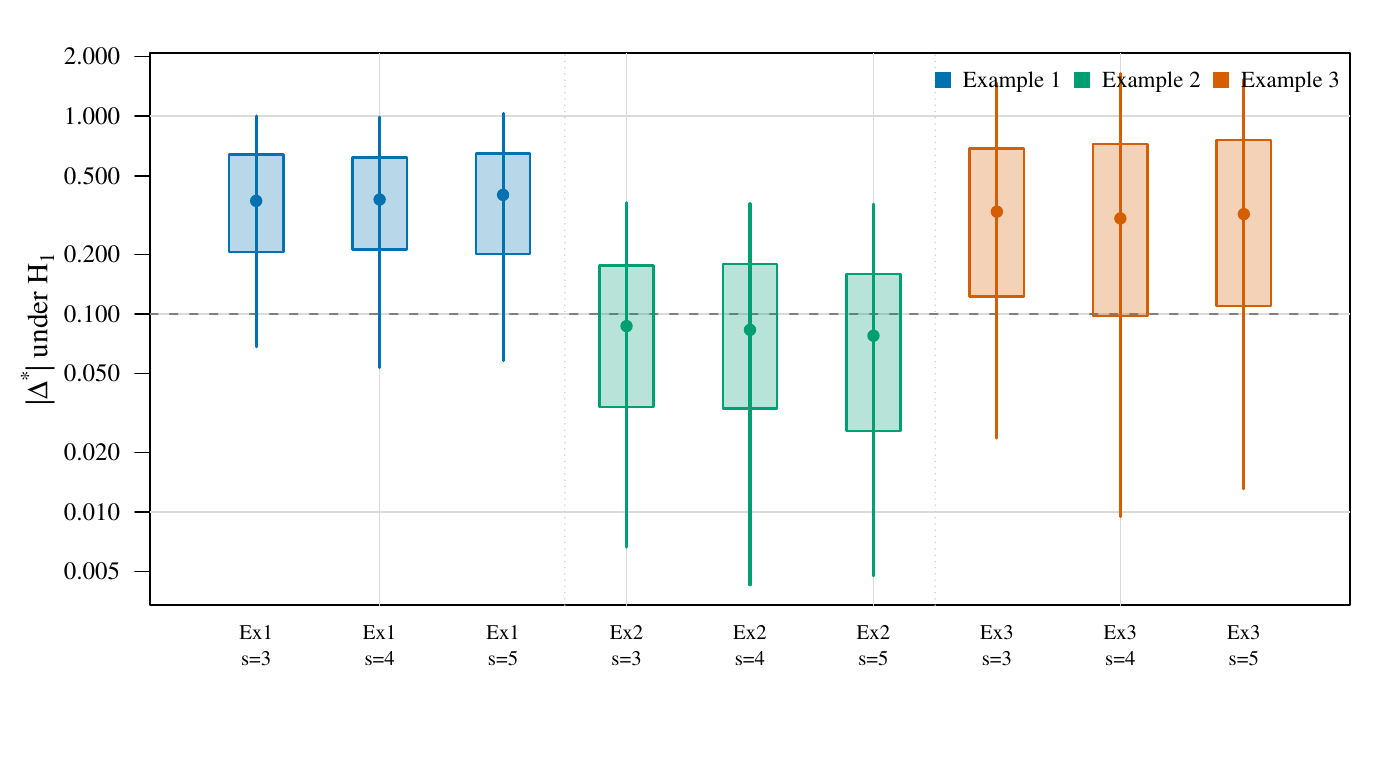}
\caption{Per-cell distribution of \(|\Delta^*|\) on \(H_1\)
contexts at \(n=20\), \(p=500\), for \(s\in\{3,4,5\}\) across the three
examples. Each box summarizes the
\(5\)/\(25\)/\(50\)/\(75\)/\(95\)\% quantiles for one configuration:
the box spans the interquartile range (\(25\)th--\(75\)th percentiles),
the filled dot marks the median, and the vertical whiskers extend to the
\(5\)th and \(95\)th percentiles.
The dashed gray line at \(0.10\) marks a small-gap reference scale near the
null--alternative boundary; the vertical axis is on the log scale.}
\label{fig:app-h1-gap}
\end{figure}

\subsection{Empirical estimation rate}
\label{app:estimation-rate}

As a finite-sample rate diagnostic for Theorem~\ref{thm:lasso-rates},
we record the terminal estimation error
\(\|\hat\bbeta_{T-1}-\bbeta^*\|_2\) at \(n=20\) and \(p=500\) for the
sparsity levels \(s\in\{3,4,5\}\). The experiment uses
\(N=100\) independent replications and the same online estimation
pipeline as the main simulation study. Figure~\ref{fig:app-estimation-rate}
plots the median error against \(T\) on a log-log scale, with
interquartile bands. The log-log reference line has slope \(-1/2\),
matching the theoretical rate established in Theorem~\ref{thm:lasso-rates}.
This diagnostic supports the
finite-sample behavior of the estimator-rate component used in the
inference procedure.

\begin{figure}[t]
\centering
\includegraphics[width=0.75\textwidth]{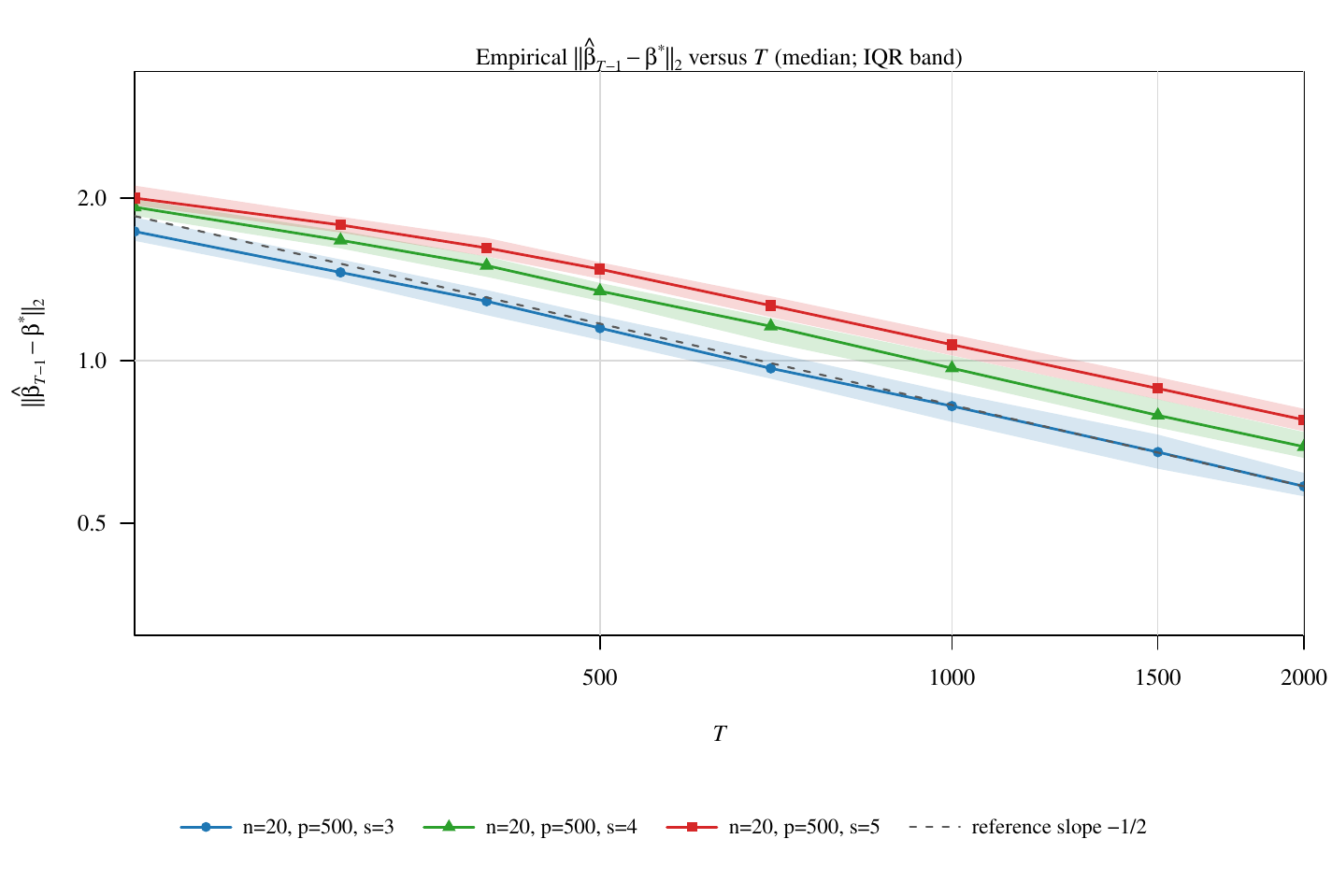}
\caption{Empirical estimation-rate diagnostic for the online penalized
estimator at \(n=20\), \(p=500\), for \(s\in\{3,4,5\}\), based on
\(N=100\) independent replications. Solid curves show the median
\(\|\hat\bbeta_{T-1}-\bbeta^*\|_2\), and shaded bands show the
interquartile range. The gray dashed reference line has slope \(-1/2\)
on the log-log scale.}
\label{fig:app-estimation-rate}
\end{figure}

\section{Proofs of Main Theorems}\label{app: proof main thms}
\subsection{Proof of Theorem~\ref{thm:lasso-rates} (Rates for the penalized estimator)}\label{sec: proof thm lasso rates}
We first invoke the following lemma, which provides uniform entrywise bounds for the gradient and Hessian of the negative log-likelihood.  
\begin{lemma}\label{lem: grad hessian rates}
    Under Assumption~\ref{asp: assortment const}, with probability at least $1 - O(T^{-1})$, the following two bounds hold for all $t \in [T]$ and all $\bbeta \in \cB_1(\bbeta^*,3\tau)$,
    \begin{align}
        \|  \nabla_{\bbeta} \ell_{t-1}(\bbeta^*)\|_{\infty} & \le C \nu\cdot \left( \log (T p) +   \sqrt{t\log (Tp)}\right)\label{eq: grad rate},\\
        \|\nabla_{\bbeta}^2 \ell_{t-1}(\bbeta) - (t-1) \cdot \bSigma^*\|_{\max} &\le C\nu^2\cdot\left(C_nt\tau+\log(Tp) + \sqrt{t \log(Tp)} \right),\label{eq: hes rate}
    \end{align}
    where $C > 0$ is a large enough constant, and the constant \(C_n\) admits the following forms under the respective conditions of Assumption~\ref{asp: assortment const}:
\begin{enumerate}
    \item If \(\bcS^K=\{\cS\subseteq[n]: |\cS|=K\}\) and \(\rho \le 2\), then
    \[
    C_n \;=\; \bar\mu  K^3 \nu \sigma_r^{-1}  \sqrt{\log n}.
    \]
    \item If \(\bcS^K \subseteq \{\cS\subseteq[n]: |\cS|=K\}\) and, for any \(\cS,\cS'\in \bcS^K\), \(|\cS\cap \cS'|\le (K/\rho^2-1)\vee 0\), then
    \[
    C_n \;=\; K \bar\mu   \nu \sigma_r^{-1} \sqrt{\log n}(K/\rho^2 \vee 1).
    \]
\end{enumerate}
\end{lemma}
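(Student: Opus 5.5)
For the gradient bound \eqref{eq: grad rate}, I would write $-\nabla_{\bbeta}\ell_{t-1}(\bbeta^*)=\sum_{t'=1}^{t-1}\xi_{t'}$ with
\[
\xi_{t'}:=\bv_{t',i_{t'}}-\EE_{\bbeta^*,t',\cS_{t'}}(\bv_{t',i_{t'}}).
\]
By Assumption~\ref{asp: dpd structure}, $\cS_{t'}=\cS_{t'}(\hat\bbeta_{t'-1})$ is determined by $\cH_{t'-1}$ and $(\bv_{t'},\br_{t'})$, so each coordinate $\{\xi_{t',k}\}_{t'}$ is a martingale difference sequence with respect to the filtration generated by $\cH_{t'}$ together with the current context. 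Its increments satisfy $|\xi_{t',k}|\le 2\nu$ by Assumption~\ref{asp: cov}. Apply Freedman's (or Bernstein's martingale) inequality to each coordinate $k\in[p]$ and each horizon $t\in[T]$, with deviation level $x=C\log(Tp)$. Since the predictable variance is at most $4\nu^2 t$, this gives a tail of the form $\nu\{\log(Tp)+\sqrt{t\log(Tp)}\}$. A union bound over the $pT$ pairs then yields \eqref{eq: grad rate} with probability $1-O(T^{-1})$.

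For the Hessian bound \eqref{eq: hes rate}, I would decompose, for $\bbeta\in\cB_1(\bbeta^*,3\tau)$,
\[
\nabla^2\ell_{t-1}(\bbeta)-(t-1)\bSigma^*=\mathrm{(I)}+\mathrm{(II)}+\mathrm{(III)},
\]
with the three terms defined as follows.

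(I) The parameter-smoothness term $\nabla^2\ell_{t-1}(\bbeta)-\nabla^2\ell_{t-1}(\bbeta^*)$, with assortments held fixed. The MNL covariance summand is a smooth function of $\bbeta$ through the choice probabilities, and its derivative in direction $\bbeta-\bbeta^*$ is bounded entrywise by $C\nu^3\|\bbeta-\bbeta^*\|_1$. This gives a bound of order $\nu^2 t\tau$ after absorbing $\nu$, which is dominated by the $C_n t\tau$ term.

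(II) The martingale term $\sum_{t'}\{\bSigma_{t'}(\hat\bbeta_{t'-1})-\EE(\bSigma_{t'}(\hat\bbeta_{t'-1})\mid\hat\bbeta_{t'-1})\}$. Because $(\bv_{t'},\br_{t'})$ is i.i.d. and independent of $\cH_{t'-1}$ (Assumption~\ref{asp: abs cont of r}), this is again a martingale with entries bounded by $2\nu^2$. The same Freedman inequality plus a union bound over $p^2T$ entries gives $\nu^2\{\log(Tp)+\sqrt{t\log(Tp)}\}$.

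(III) The selection-drift term $\sum_{t'}\{\EE(\bSigma_{t'}(\hat\bbeta_{t'-1})\mid\hat\bbeta_{t'-1})-\bSigma^*\}$. This is controlled by the local Hessian continuity bound from the remark after Assumption~\ref{asp: assortment const} (Claim~\ref{claim: cont bd}): each summand is at most $C_n\nu^2\|\hat\bbeta_{t'-1}-\bbeta^*\|_1+\nu^2/T\lesssim C_n\nu^2\tau+\nu^2/T$. Here I use that $\hat\bbeta_{t'-1}\in\cB_1(\hat\bbeta_0,2\tau)\subseteq\cB_1(\bbeta^*,3\tau)$ by construction in \eqref{eq: theta hat}. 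Summing over $t'$ gives $C_n\nu^2t\tau+\nu^2$.

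Uniformity over $\bbeta$ is not an issue. Term (I) is deterministic given the data, (II) and (III) do not depend on $\bbeta$, and uniformity in $t$ comes from the union bound.

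The main obstacle is term (III), that is, establishing Claim~\ref{claim: cont bd}. The map $\bbeta\mapsto\cS_{t'}(\bbeta)$ is discontinuous, so Lipschitz continuity must come from averaging over the random revenues $\br_{t'}$. My plan for that claim is as follows.
\begin{enumerate}
\item Write the difference of expected Hessians as a sum over assortment pairs $\cS\neq\cS'$, weighting each pair by the probability that $\cS$ is optimal under $\bbeta_1$ but $\cS'$ is optimal under $\bbeta_2$.
\item Bound that probability by the probability that the Gaussian max-difference $\max_{\cS}R(\cS\mid\bbeta_1)-\max_{\cS'\neq\cS}R(\cS'\mid\bbeta_1)$ lies within $O(\bar\mu K\nu\|\bbeta_1-\bbeta_2\|_1)$ of zero. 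This uses that revenues are Lipschitz in $\bbeta$ on the high-probability envelope $\max|r_{tj}|\le\bar\mu$.
\item Apply the anti-concentration inequality for differences of Gaussian maxima \citep{belloni2024anticon}, conditional on $\bv_{t'}$. Here $R$ is linear in $\br$ with coefficient vectors given by the choice probabilities, and the overlap conditions in Assumption~\ref{asp: assortment const} (case (1) or case (2)) lower bound the variance of pairwise revenue differences by roughly $\sigma_r^2/K$ times a factor depending on $\rho$. This produces the $\sigma_r^{-1}\sqrt{\log n}$ factor and the $K$-powers appearing in $C_n$.
\item Account for the complement of the revenue envelope, which has probability $O(T^{-2})$ and gives the $\nu^2/T$ residual.
\end{enumerate}
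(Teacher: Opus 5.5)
Your gradient bound and your split of the Hessian error into a smoothness term, a martingale term and a selection-drift term match the paper's proof. So does reducing (III) to Claim~\ref{claim: cont bd} via anti-concentration for differences of Gaussian maxima. The gap is in your reduction inside the Claim.

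Because you couple $\cS_t(\bbeta_1)$ and $\cS_t(\bbeta_2)$ on the same revenue draw, steps 1--2 reduce everything to $\PP\{\cS_t(\bbeta_1)\neq\cS_t(\bbeta_2)\}$. That is, you need to control the event that the \emph{top-two gap} $\max_{\cS}\xi_{\cS}-\max_{\cS'\neq\hat\cS}\xi_{\cS'}$ is $O(\varepsilon)$. Here $\xi_{\cS}$ is the revenue of $\cS$ (Gaussian given $\bv_t$) and $\hat\cS$ is the random argmax. The inequality of \cite{belloni2024anticon} controls $\PP(|\max_{\cS\in\cA}\xi_{\cS}-\max_{\cS\in\bcS^K\setminus\cA}\xi_{\cS}|\le\varepsilon)$ only for a \emph{fixed} partition $(\cA,\bcS^K\setminus\cA)$, uniformly in $\cA$. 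The partition $\{\hat\cS\}$ versus the rest is data dependent, so step 3 does not apply as stated. The patch your step 1 suggests is to fix the identity of $\cS$ (or of the pair $(\cS,\cS')$), apply the fixed-partition bound to $\{\cS\}$ versus $\bcS^K\setminus\{\cS\}$, and sum. That costs a factor $|\bcS^K|$ (e.g.\ $\binom{n}{K}$ in case (1)), which is absent from $C_n$.

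The paper avoids this by bounding total variation rather than the coupling probability.
\begin{itemize}
\item Conditional on $\bv_t$, the Hessian difference is at most $2\nu^2\|\PP(\cS_t(\bbeta)\mid\bbeta,\bv_t)-\PP(\cS_t(\bbeta^*)\mid\bbeta^*,\bv_t)\|_{\rm TV}$.
\item This TV distance equals $\sup_{\cA}|\PP\{\cS_t(\bbeta)\in\cA\}-\PP\{\cS_t(\bbeta^*)\in\cA\}|$.
\item For each fixed $\cA$, the symmetric difference of the two events (on the same $\br_t$) lies in $\{|\max_{\cA}\xi-\max_{\cA^c}\xi|\le\varepsilon\}$.
\end{itemize}
So a single uniform-in-$\cA$ bound suffices.

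If you prefer to keep the coupling, add a symmetrization step. Take $\cA$ to be a uniformly random subset of $\bcS^K$, independent of $\br_t$. The top two assortments fall in different parts with probability $1/2$, so $\PP(\text{top-two gap}\le\varepsilon)\le 2\sup_{\cA}\PP(|\max_{\cA}\xi-\max_{\cA^c}\xi|\le\varepsilon)$.

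Two smaller points.
\begin{itemize}
\item The Lipschitz constant needs no $K$. By \eqref{eq: gradient revenue}, $\|\nabla_{\bbeta}R\|_\infty\le 2\nu\max_j|r_{tj}|$, and carrying your $K$ would inflate $C_n$ by a factor $K$.
\item The inputs to the cited theorem are $\min_{\cS}\sigma_{\cS}^2\gtrsim\sigma_r^2/K$ and a lower bound on $1-\gamma$, where $\gamma=\max_{\cS\neq\cS'}\sigma_{\cS}^{-2}\sigma_{\cS\cS'}$ is built from the variances and covariances of $\xi$ given $\bv_t$. Case (1) has no overlap condition; there $1-\gamma\gtrsim K^{-2}$ follows from $\rho\le 2$, which is the source of $K^3=K\cdot K^2$. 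In case (2), the overlap bound gives $1-\gamma\ge\rho^2/K\wedge 1$.
\end{itemize}
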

The proof of Lemma~\ref{lem: grad hessian rates} is deferred to Section~\ref{sec: proof lem grad hessian rates}. Given a $t \in [T]$, let $\Delta_t := \hat\bbeta_{t-1}-\bbeta^*$. By the optimality condition for \eqref{eq: theta hat},
\begin{align*}
    \ell_{t-1}(\hat\bbeta_{t-1}) + \lambda_{t-1} \|\hat\bbeta_{t-1}\|_1 \le  \ell_{t-1}(\bbeta^*) + \lambda_{t-1} \|\bbeta^*\|_1.
\end{align*}
Rearranging terms and applying a Taylor expansion yields
\begin{equation}\label{eq: lasso decomp}
    \begin{aligned}
    \frac{1}{2} \Delta_t^{\top} \nabla^2_{\bbeta} \ell_{t-1}(\tilde\bbeta) \Delta_t &\le \lambda_{t-1} \|\bbeta^*\|_1 - \lambda_{t-1} \|\hat\bbeta_{t-1}\|_1 - \nabla_{\bbeta} \ell_{t-1}(\bbeta^*)^{\top} \Delta_t\\
    & \le \|\nabla_{\bbeta} \ell_{t-1}(\bbeta^*)\|_{\infty} \cdot\|\Delta_t\|_1 + \lambda_{t-1} \|[\Delta_t]_{\cI_0}\|_1 - \lambda_{t-1} \|[\Delta_t]_{\cI^c_0}\|_1\\
    & \le \frac{\lambda_{t-1}}{2} ( 3 \|[\Delta_t]_{\cI_0}\|_1 -  \|[\Delta_t]_{\cI^c_0}\|_1),
 \end{aligned}
\end{equation}
where the last inequality holds with probability at least $1-O(T^{-1})$ for all $t \in [T]$ by \eqref{eq: grad rate} in Lemma~\ref{lem: grad hessian rates}, provided that
\[
\lambda_{t-1} \ge C\nu\Bigl\{\log(Tp)+\sqrt{t\log(Tp)}\Bigr\}
\]
for a sufficiently large absolute constant $C>0$. The last inequality in \eqref{eq: lasso decomp} also gives the cone condition $\|[\Delta_t]_{\cI_0^c}\|_1 \le 3 \|[\Delta_t]_{\cI_0}\|_1$ because $\frac{1}{2} \Delta_t^{\top} \nabla^2_{\bbeta} \ell_{t-1}(\tilde\bbeta) \Delta_t \ge 0$, and that 
\begin{equation}\label{eq: delta lower bd}
    \Delta_t^{\top} \nabla^2_{\bbeta} \ell_{t-1}(\tilde\bbeta) \Delta_t \le {\lambda_{t-1}} ( 3 \|[\Delta_t]_{\cI_0}\|_1 -  \|[\Delta_t]_{\cI^c_0}\|_1) \le {3\lambda_{t-1} \sqrt{s} \|[\Delta_t]_{\cI^c_0}\|_2} \le {3\lambda_{t-1} \sqrt{s} \|\Delta_t\|_2}.
\end{equation}
We now study the term $\Delta_t^{\top} \nabla^2_{\bbeta} \ell_{t-1}(\tilde\bbeta) \Delta_t$. Under Assumption~\ref{asp: cov}, with probability at least $1 - O(T^{-1})$, uniformly over $t \in [T]$, we have 
\begin{align*}
    \Delta_t^{\top} \nabla^2_{\bbeta} \ell_{t-1}(\tilde\bbeta) \Delta_t & = \Delta_t^{\top} \left( \nabla^2_{\bbeta} \ell_{t-1}(\tilde\bbeta) - (t-1) \cdot \bSigma^* + (t-1) \cdot \bSigma^* \right) \Delta_t \\
    & \ge (t-1)\cdot \underline\lambda \|\Delta_t\|_2^2 -  \|\nabla_{\bbeta}^2 \ell_{t-1}(\bbeta) - (t-1) \cdot \bSigma^*\|_{\max} \|\Delta_t\|_1^2\\
    & \ge (t-1)\cdot \underline\lambda \|\Delta_t\|_2^2 - 16 C\nu^2 s\cdot\left(C_nt\tau+\log(Tp) + \sqrt{t \log(Tp)} \right) \|\Delta_t\|_2^2,
\end{align*}
where the last inequality follows from \eqref{eq: hes rate} in Lemma~\ref{lem: grad hessian rates} and the inequality 
$$
\|\Delta_t\|_1 =  \|[\Delta_t]_{\cI_0}\|_1 + \|[\Delta_t]_{\cI^c_0}\|_1 \le 4  \|[\Delta_t]_{\cI_0}\|_1 \le 4 \sqrt{s} \|[\Delta_t]_{\cI_0}\|_2 \le 4 \sqrt{s} \|\Delta_t\|_2. 
$$
For all $t\in[T]$ satisfying $t \ge C'\nu^4 s^2\underline\lambda^{-2}\log(Tp)$ for a sufficiently large constant $C'>0$, and assuming $\tau \le c\,\underline\lambda\,(C_n\nu^2 s)^{-1}$ for a sufficiently small constant $c>0$, it follows that
$$ \Delta_t^{\top} \nabla^2_{\bbeta} \ell_{t-1}(\tilde\bbeta) \Delta_t \ge (t-1)\cdot \underline\lambda \|\Delta_t\|_2^2 - \frac{t-1}{2} \cdot \underline\lambda \|\Delta_t\|_2^2 = \frac{t-1}{2}  \underline\lambda \|\Delta_t\|_2^2. $$
Combining this with \eqref{eq: delta lower bd} yields
\[
\|\Delta_t\|_2 \;\lesssim\; \frac{\lambda_{t-1}\sqrt{s}}{\underline\lambda t}
\;\le\; C \nu \sqrt{\frac{s\log(Tp)}{\underline\lambda^2 t}},
\qquad
\|\Delta_t\|_1 \;\le\; 4\sqrt{s}\,\|\Delta_t\|_2
\;\le\; C \nu s \sqrt{\frac{\log(Tp)}{\underline\lambda^2 t}},
\]
for a sufficiently large constant $C>0$, provided also that
\[
\lambda_{t-1} \le C'' \nu\Bigl\{\log(Tp)+\sqrt{t\log(Tp)}\Bigr\}
\]
for some absolute constant $C''>0$ independent of $t$.
\subsection{Proof of Corollary~\ref{col: unique solution decomp} (Effective support recovery)}
\label{sec: proof col unique solution decomp}
We begin by establishing uniqueness of the linear predictors along the sample path. Namely, for each
\(t\in[T-1]\), define
\[
u_t(\bbeta):=\big(v_{tj}^\top\bbeta\big)_{j\in S_t}\in\RR^{|S_t|},
\qquad
u(\bbeta):=\big(u_1(\bbeta)^\top,\ldots,u_{T-1}(\bbeta)^\top\big)^\top .
\]
Then, for any minimizer
\begin{equation}\label{eq: lasso T}
    \hat\bbeta_{T-1}\in \arg\min_{\|\bbeta-\hat\bbeta_0\|_1\le 2\tau}\Big\{\ell_{T-1}(\bbeta)+\lambda_{T-1}\|\bbeta\|_1\Big\},
\end{equation}
the vector \(u(\hat\bbeta_{T-1})\) is uniquely determined.

To see this, note that \(\ell_{T-1}(\bbeta)\) depends on \(\bbeta\) only through \(u(\bbeta)\) and can be written as
\(\ell_{T-1}(\bbeta)=h(u(\bbeta))\), where
\[
h(u)
:=
-\sum_{t=1}^{T-1}\log\Bigg\{\frac{\exp(u_{t,i_t})}{1+\sum_{k\in S_t}\exp(u_{tk})}\Bigg\}.
\]
Moreover, \(h(\cdot)\) is twice continuously differentiable and its Hessian is block diagonal:
\[
\nabla_u^2 h(u)=\operatorname{diag}\big(H_1(u),\ldots,H_{T-1}(u)\big),
\qquad
H_t(u)=\diag\!\big(p_t(u)\big)-p_t(u)p_t(u)^\top,
\]
where \(p_t(u)=(p_{tj}(u))_{j\in S_t}\) with
\begin{equation}\label{eq: p_tj}
    p_{tj}(u):=\frac{\exp(u_{tj})}{1+\sum_{k\in S_t}\exp(u_{tk})},
\qquad
p_{t0}(u):=\frac{1}{1+\sum_{k\in S_t}\exp(u_{tk})}.
\end{equation}
By Gershgorin's circle theorem, for each \(t\in[T-1]\),
\begin{align*}
  \lambda_{\min}\big(H_t(u)\big)
& \ge \min_{j\in S_t}\Big\{p_{tj}(u)-\sum_{k\in S_t\setminus\{j\}}p_{tj}(u)p_{tk}(u)\Big\}
=\min_{j\in S_t} p_{tj}(u)\,p_{t0}(u)\\
& =\min_{j\in S_t}\frac{\exp(u_{tj})}{\Big(1+\sum_{k\in S_t}\exp(u_{tk})\Big)^2}
>0.
\end{align*}
Hence \(h(\cdot)\) is strictly convex in \(u\). Since the feasible set \(\{\bbeta:\|\bbeta-\hat\bbeta_0\|_1\le 2\tau\}\)
is convex and \(u(\bbeta)\) is affine in \(\bbeta\), strict convexity of \(h\) implies that any two minimizers
\(\hat\bbeta_{T-1}^{(1)},\hat\bbeta_{T-1}^{(2)}\) must satisfy \(u(\hat\bbeta_{T-1}^{(1)})=u(\hat\bbeta_{T-1}^{(2)})\), proving
the claim.

Now let \(\hat\bbeta_{T-1}\) be any minimizer of~\eqref{eq: lasso T}. As noted in
Remark~\ref{rmk: constraint removal}, under the condition
\(\nu s\sqrt{\log(Tp)/(\underline\lambda^{2}T)} \le C^{-1}\tau\) for a sufficiently large \(C>0\), the estimator \(\hat\bbeta_{T-1}\) lies in the feasible set \(\cB_1(\hat\bbeta_0,2\tau)\) and hence coincides with the unconstrained \(\ell_1\)-penalized estimator. Therefore, the Karush--Kuhn--Tucker (KKT) conditions imply that
\begin{equation}\label{eq:kkt_global}
-\nabla_{\bbeta}\ell_{T-1}(\hat\bbeta_{T-1})
=
\sum_{t=1}^{T-1}
\left\{
\bv_{t,i_t}
-
\sum_{j\in \cS_t\cup\{0\}} p_{tj}\big(u(\hat\bbeta_{T-1})\big)\,\bv_{tj}
\right\}
=
\lambda_{T-1}\,\sgn(\hat\bbeta_{T-1}),
\end{equation}
where \(p_{tj}(\cdot)\) is defined in~\eqref{eq: p_tj} and \(\sgn(\hat\bbeta_{T-1})\) denotes a
subgradient of \(\|\bbeta\|_1\) at \(\hat\bbeta_{T-1}=(\hat\bbeta_{T-1,1},\ldots,\hat\bbeta_{T-1,p})^\top\), that is,
\[
\sgn(\hat\bbeta_{T-1,j})\in
\begin{cases}
\{\sign(\hat\bbeta_{T-1,j})\}, & \text{if }\hat\bbeta_{T-1,j}\neq 0,\\[2pt]
[-1,1], & \text{if }\hat\bbeta_{T-1,j}=0,
\end{cases}
\qquad j=1,\ldots,p.
\]
Moreover, since \(u(\hat\bbeta_{T-1})\) is unique, the left-hand side of~\eqref{eq:kkt_global} is uniquely determined, and hence so is the associated KKT subgradient.

Consequently, suppose that \([\nabla^2_{\bbeta}\ell_{T-1}(\bbeta)]_{\cI_*,\cI_*}\) is positive definite for all
\(\bbeta\in\cB_1(\hat\bbeta_0,2\tau)\), and that there exists a feasible point
\(\hat\bbeta_{T-1}\in\cB_1(\hat\bbeta_0,2\tau)\) satisfying
\(\sgn([\hat\bbeta_{T-1}]_{\cI_*})=\sgn([\bbeta^*]_{\cI_*})\),
\([\hat\bbeta_{T-1}]_{\cI_*^c}=\mathbf 0\), and the KKT conditions
\begin{equation}\label{eq: kkt support}
-\big[\nabla_{\bbeta}\ell_{T-1}(\hat\bbeta_{T-1})\big]_{\cI_*}
=
\lambda_{T-1}\,\big[\sgn(\bbeta^*)\big]_{\cI_*},
\end{equation}
and
\begin{equation}\label{eq: kkt off support}
\big|\big[\nabla_{\bbeta}\ell_{T-1}(\hat\bbeta_{T-1})\big]_j\big|<\lambda_{T-1},
\qquad \forall j\notin \cI_* .
\end{equation}
Then \(\hat\bbeta_{T-1}\) is the unique solution to~\eqref{eq: lasso T} and it recovers the effective support of \(\bbeta^*\).

We first verify that \([\nabla_{\bbeta}^2\ell_{T-1}(\bbeta)]_{\cI_*,\cI_*}\) is positive definite uniformly over \(\cB_1(\hat\bbeta_0,2\tau)\) with high probability. Recall
\(\bSigma_{\cI_*}^*:=\bSigma^*_{\cI_*,\cI_*}\). Then
\[
\lambda_{\min}(\bSigma^*)\le \lambda_{\min}(\bSigma_{\cI_*}^*)
\le \lambda_{\max}(\bSigma_{\cI_*}^*)\le \lambda_{\max}(\bSigma^*).
\]
By~\eqref{eq: hes rate} in Lemma~\ref{lem: grad hessian rates}, with probability at least
\(1-O(T^{-1})\), for all \(\bbeta\in \cB_1(\hat\bbeta_0,2\tau)\subseteq \cB_1(\bbeta^*,3\tau)\),
\begin{align}
\big\|[\nabla_{\bbeta}^2 \ell_{T-1}(\bbeta)]_{\cI_*,\cI_*}-(T-1)\bSigma_{\cI_*}^*\big\|_{2}
&\le s_*\,\big\|\nabla_{\bbeta}^2 \ell_{T-1}(\bbeta)-(T-1)\bSigma^*\big\|_{\max} \notag\\
&\le C\nu^2 s_*\Big(C_nT\tau+\log(Tp)+\sqrt{T\log(Tp)}\Big).\label{eq: sig support bound}
\end{align}
Under Assumption~\ref{asp: cov}, we have
\[
\nu^2s_*\ge s_*\|\bSigma_{\cI_*}^*\|_{\max}\ge \lambda_{\max}(\bSigma_{\cI_*}^*)\ge \lambda_{\min}(\bSigma_{\cI_*}^*)\ge \lambda_{\min}(\bSigma^*)\ge \underline\lambda.
\]
Consequently, if \(T\ge C\nu^4s^2\underline\lambda^{-2}\log(Tp)\) and
\(\tau \le c\underline\lambda(C_n\nu^2s)^{-1}\) for sufficiently large \(C>0\) and sufficiently small \(c>0\), then for all \(\bbeta\in \cB_1(\hat\bbeta_0,2\tau)\),
\begin{align}
\lambda_{\min}\!\left([\nabla_{\bbeta}^2 \ell_{T-1}(\bbeta)]_{\cI_*,\cI_*}\right)
&\ge (T-1)\lambda_{\min}(\bSigma_{\cI_*}^*)
-\big\|[\nabla_{\bbeta}^2 \ell_{T-1}(\bbeta)]_{\cI_*,\cI_*}-(T-1)\bSigma_{\cI_*}^*\big\|_{2} \notag \\
&\ge (T-1)\underline\lambda
- C\nu^2 s_*\Big(C_nT\tau+\log(Tp)+\sqrt{T\log(Tp)}\Big) \notag\\
&\ge (T-1)\underline\lambda/2 \;>\;0, \label{eq: min eigen sig support}
\end{align}
and hence \([\nabla_{\bbeta}^2\ell_{T-1}(\bbeta)]_{\cI_*,\cI_*}\) is positive definite uniformly over \(\cB_1(\hat\bbeta_0,2\tau)\) with high probability.

We next construct an estimator \(\hat\bbeta_{T-1}\in \cB_1(\hat\bbeta_0,2\tau)\) that satisfies
\(\sgn([\hat\bbeta_{T-1}]_{\cI_*})=\sgn([\bbeta^*]_{\cI_*})\),
\([\hat\bbeta_{T-1}]_{\cI_*^c}=\mathbf 0\), and the KKT conditions
\eqref{eq: kkt support} and \eqref{eq: kkt off support}. Set
\([\hat\bbeta_{T-1}]_{\cI_*^c}=\mathbf{0}\) and define
\([\hat\bbeta_{T-1}]_{\cI_*}\) as a minimizer of the restricted problem
\begin{equation}\label{eq:restricted_lasso}
[\hat\bbeta_{T-1}]_{\cI_*}\in
\arg\min_{\bbeta\in\RR^{s_*}:\ \|\bbeta-[\bbeta^*]_{\cI_*}\|_1\le \tau}
\Big\{\tilde\ell_{T-1}(\bbeta)+\lambda_{T-1}\|\bbeta\|_1\Big\},
\end{equation}
where
\[
\tilde\ell_{T-1}(\bbeta)
=
-\sum_{t'=1}^{T-1}\log\left\{
\frac{\exp\{[\bv_{t',\,i_{t'}}]_{\cI_*}^{\top}\bbeta\}}
{1+\sum_{k\in \cS_{t'}}\exp\{[\bv_{t'k}]_{\cI_*}^{\top}\bbeta\}}
\right\}
\]
is the negative log-likelihood restricted to the effective support. Denote
\(\hat\bbeta_{\cI_*}:=[\hat\bbeta_{T-1}]_{\cI_*}\) and
\(\bbeta_{\cI_*}:=[\bbeta^*]_{\cI_*}\), and let
\(\Delta_T:=\hat\bbeta_{\cI_*}-\bbeta_{\cI_*}\). By the optimality condition for \eqref{eq:restricted_lasso},
\[
    \tilde\ell_{T-1}(\hat\bbeta_{\cI_*})+\lambda_{T-1}\|\hat\bbeta_{\cI_*}\|_1
    \le
    \tilde\ell_{T-1}(\bbeta_{\cI_*})+\lambda_{T-1}\|\bbeta_{\cI_*}\|_1 .
\]
Rearranging terms and applying a Taylor expansion yields
\begin{equation}\label{eq: lasso decomp 1}
    \begin{aligned}
    \frac{1}{2}\Delta_T^{\top}\tilde\nabla^2_{\bbeta}\ell_{T-1}(\tilde\bbeta)\Delta_T
    &\le
    \lambda_{T-1}\|\bbeta_{\cI_*}\|_1-\lambda_{T-1}\|\hat\bbeta_{\cI_*}\|_1
    -
    \nabla_{\bbeta}\tilde\ell_{T-1}(\bbeta_{\cI_*})^{\top}\Delta_T\\
    &\le
    \|\nabla_{\bbeta}\tilde\ell_{T-1}(\bbeta_{\cI_*})\|_{\infty}\|\Delta_T\|_1
    +
    \lambda_{T-1}\|\Delta_T\|_1 .
    \end{aligned}
\end{equation}
The restricted gradient satisfies
\[
    \nabla_{\bbeta}\tilde\ell_{T-1}(\bbeta_{\cI_*})
    =
    [\nabla_{\bbeta}\ell_{T-1}(\bbeta^\circ)]_{\cI_*},
\]
where \(\bbeta^\circ\) agrees with \(\bbeta^*\) on \(\cI_*\) and is zero on \(\cI_*^c\). Hence, by a Taylor expansion between \(\bbeta^\circ\) and \(\bbeta^*\),
\[
\begin{aligned}
    \|\nabla_{\bbeta}\tilde\ell_{T-1}(\bbeta_{\cI_*})
    -[\nabla_{\bbeta}\ell_{T-1}(\bbeta^*)]_{\cI_*}\|_{\infty}
    &\le
    \sup_{\bbeta\in\cB_1(\bbeta^*,3\tau)}
    \|\nabla_{\bbeta}^2\ell_{T-1}(\bbeta)\|_{\max}
    \|[\bbeta^*]_{\cI_{\rm wk}}\|_1  \\
    &\le
    C T\nu^2\|[\bbeta^*]_{\cI_{\rm wk}}\|_1
    =
    o\{\nu\sqrt{T\log(Tp)}\}.
\end{aligned}
\]
Together with \eqref{eq: grad rate} in Lemma~\ref{lem: grad hessian rates}, this gives
\[
    \|\nabla_{\bbeta}\tilde\ell_{T-1}(\bbeta_{\cI_*})\|_{\infty}
    \le
    \lambda_{T-1}/2
\]
with probability at least \(1-O(T^{-1})\), provided \(C_\lambda\) is sufficiently large. Therefore \eqref{eq: lasso decomp 1} gives
\[
    \frac{1}{2}\Delta_T^{\top}\tilde\nabla^2_{\bbeta}\ell_{T-1}(\tilde\bbeta)\Delta_T
    \le
    \frac{3\lambda_{T-1}}{2}\|\Delta_T\|_1 .
\]
Using \eqref{eq: min eigen sig support} and \(\|\Delta_T\|_1\le \sqrt{s_*}\|\Delta_T\|_2\), we obtain
\[
    \frac{(T-1)\underline\lambda}{4}\|\Delta_T\|_2^2
    \le
    \frac{3\lambda_{T-1}}{2}\sqrt{s_*}\|\Delta_T\|_2 .
\]
Thus, with probability at least \(1-O(T^{-1})\),
\begin{align}
\big\|[\hat\bbeta_{T-1}]_{\cI_*}-[\bbeta^*]_{\cI_*}\big\|_2
&\le C\nu\sqrt{\frac{s_*\log(Tp)}{\underline\lambda^2T}},
\label{eq: support l2-rate}\\
\big\|[\hat\bbeta_{T-1}]_{\cI_*}-[\bbeta^*]_{\cI_*}\big\|_1
&\le C\nu s_*\sqrt{\frac{\log(Tp)}{\underline\lambda^2T}}.
\label{eq: support l1-rate}
\end{align}
In particular, \([\hat\bbeta_{T-1}]_{\cI_*}\) belongs to the feasible set
\(\{\bbeta\in\RR^{s_*}:\|\bbeta-[\bbeta^*]_{\cI_*}\|_1\le\tau\}\) when
\(\nu s_*\sqrt{\log(Tp)/(\underline\lambda^{2}T)}
\le
\nu s\sqrt{\log(Tp)/(\underline\lambda^{2}T)}
\le c\tau\) for small enough \(c>0\), and thus coincides with the unconstrained
\(\ell_1\)-penalized estimator for \eqref{eq:restricted_lasso}. Moreover, under Assumption~\ref{asp: min signal}, \eqref{eq: support l2-rate} implies that for any \(j\in\cI_*\),
\[
|\beta_j^*-\hat\bbeta_{T-1,j}|
\le
C\nu\sqrt{\frac{s_*\log(Tp)}{\underline\lambda^2T}}
\le
\frac{|\beta_j^*|}{2},
\qquad
\sgn(\hat\bbeta_{T-1,j})=\sgn(\beta_j^*)\neq 0.
\]
Since the restricted solution lies in the interior of the restricted feasible set, the restricted KKT condition gives
\[
    -[\nabla_{\bbeta}\ell_{T-1}(\hat\bbeta_{T-1})]_{\cI_*}
    =
    \lambda_{T-1}\sgn([\hat\bbeta_{T-1}]_{\cI_*}).
\]
The sign consistency just proved then yields \eqref{eq: kkt support}. By the triangle inequality,
\[
\|\hat\bbeta_{T-1}-\hat\bbeta_0\|_1
\le
\|[\hat\bbeta_{T-1}]_{\cI_*}-[\bbeta^*]_{\cI_*}\|_1
+
\|[\bbeta^*]_{\cI_{\rm wk}}\|_1
+
\|\hat\bbeta_0-\bbeta^*\|_1
<2\tau,
\]
and hence \(\hat\bbeta_{T-1}\) lies in the feasible set \(\cB_1(\hat\bbeta_0,2\tau)\). It remains to verify that the off-support condition~\eqref{eq: kkt off support} also holds for the constructed \(\hat\bbeta_{T-1}\).

Define the integrated Hessian
\begin{equation}\label{eq: int hess}
    \bar H
    :=
    \int_0^1
    \nabla_{\bbeta}^2\ell_{T-1}
    \left(
        \bbeta^*+a(\hat\bbeta_{T-1}-\bbeta^*)
    \right)
    \,da .
\end{equation}
Since both \(\bbeta^*\) and \(\hat\bbeta_{T-1}\) lie in
\(\cB_1(\hat\bbeta_0,2\tau)\subseteq\cB_1(\bbeta^*,3\tau)\), all uniform Hessian bounds above apply to \(\bar H\). By the fundamental theorem of calculus,
\[
    \nabla_{\bbeta}\ell_{T-1}(\hat\bbeta_{T-1})
    =
    \nabla_{\bbeta}\ell_{T-1}(\bbeta^*)
    +
    \bar H(\hat\bbeta_{T-1}-\bbeta^*) .
\]
Applying this identity to \eqref{eq: kkt support}, we obtain
\begin{align*}
    & \lambda_{T-1}[\sgn(\bbeta^*)]_{\cI_*} =\\
    & \qquad -\big[\nabla_{\bbeta}\ell_{T-1}(\bbeta^*)\big]_{\cI_*}
-
\bar H_{\cI_*,\cI_*}
[\hat\bbeta_{T-1}-\bbeta^*]_{\cI_*}
+
\bar H_{\cI_*,\cI_{\rm wk}}
[\bbeta^*]_{\cI_{\rm wk}},
\end{align*}
and hence
\begin{equation}\label{eq: taylor beta support}
\begin{aligned}
[\hat\bbeta_{T-1}-\bbeta^*]_{\cI_*}
&=
-\bar H_{\cI_*,\cI_*}^{-1}\\
&\quad\times
\left(
[\nabla_{\bbeta}\ell_{T-1}(\bbeta^*)]_{\cI_*}
+\lambda_{T-1}[\sgn(\bbeta^*)]_{\cI_*}
-
\bar H_{\cI_*,\cI_{\rm wk}}
[\bbeta^*]_{\cI_{\rm wk}}
\right).
\end{aligned}
\end{equation}
Substituting \eqref{eq: taylor beta support} into the left-hand side of \eqref{eq: kkt off support} yields
\[
\begin{aligned}
[\nabla_{\bbeta}\ell_{T-1}(\hat\bbeta_{T-1})]_{\cI_*^c}
&=
[\nabla_{\bbeta}\ell_{T-1}(\bbeta^*)]_{\cI_*^c}
+
\bar H_{\cI_*^c,\cI_*}
[\hat\bbeta_{T-1}-\bbeta^*]_{\cI_*}\\
&\quad
-
\bar H_{\cI_*^c,\cI_{\rm wk}}
[\bbeta^*]_{\cI_{\rm wk}}\\
&=
[\nabla_{\bbeta}\ell_{T-1}(\bbeta^*)]_{\cI_*^c}\\
&\quad
-
\bar H_{\cI_*^c,\cI_*}
\bar H_{\cI_*,\cI_*}^{-1}
[\nabla_{\bbeta}\ell_{T-1}(\bbeta^*)]_{\cI_*}\\
&\quad
-
\lambda_{T-1}
\bar H_{\cI_*^c,\cI_*}
\bar H_{\cI_*,\cI_*}^{-1}
[\sgn(\bbeta^*)]_{\cI_*}\\
&\quad
+
\bar H_{\cI_*^c,\cI_*}
\bar H_{\cI_*,\cI_*}^{-1}
\bar H_{\cI_*,\cI_{\rm wk}}
[\bbeta^*]_{\cI_{\rm wk}}\\
&\quad
-
\bar H_{\cI_*^c,\cI_{\rm wk}}
[\bbeta^*]_{\cI_{\rm wk}} .
\end{aligned}
\]
Therefore,
\[
\begin{aligned}
\left\|[\nabla_{\bbeta}\ell_{T-1}(\hat\bbeta_{T-1})]_{\cI_*^c}\right\|_{\infty}
&\le {\rm (I)}+\lambda_{T-1}{\rm (II)}+{\rm (III)},
\end{aligned}
\]
where
\[
\begin{aligned}
{\rm (I)}
&:=
\left\|
[\nabla_{\bbeta}\ell_{T-1}(\bbeta^*)]_{\cI_*^c}
-
\bar H_{\cI_*^c,\cI_*}
\bar H_{\cI_*,\cI_*}^{-1}
[\nabla_{\bbeta}\ell_{T-1}(\bbeta^*)]_{\cI_*}
\right\|_{\infty},\\
{\rm (II)}
&:=
\left\|
\bar H_{\cI_*^c,\cI_*}
\bar H_{\cI_*,\cI_*}^{-1}
[\sgn(\bbeta^*)]_{\cI_*}
\right\|_{\infty},
\end{aligned}
\]
and
\[
\begin{aligned}
{\rm (III)}
&:=
\left\|
\left\{
\bar H_{\cI_*^c,\cI_*}
\bar H_{\cI_*,\cI_*}^{-1}
\bar H_{\cI_*,\cI_{\rm wk}}
-
\bar H_{\cI_*^c,\cI_{\rm wk}}
\right\}
[\bbeta^*]_{\cI_{\rm wk}}
\right\|_{\infty}.
\end{aligned}
\]

We now bound terms~(I), (II), and (III). First, we control
\(\|\bar H_{\cI_*^c,\cI_*}
\bar H_{\cI_*,\cI_*}^{-1}\|_\infty\). For any
\(\ell\in\mathcal I_*^c\), by \eqref{eq: hes rate} in Lemma~\ref{lem: grad hessian rates}, together with \eqref{eq: sig support bound} and \eqref{eq: min eigen sig support}, it holds with probability at least \(1-O(T^{-1})\) that
\begin{align}
    &\|\bar H_{\cI_*,\cI_*}^{-1}  - [(T-1)\bSigma^*_{\cI_*}]^{-1}\|_2 \notag\\
    &\quad\le
    \|\bar H_{\cI_*,\cI_*}^{-1}\|_2
    \|[(T-1)\bSigma_{\cI_*}^*]^{-1}\|_2
    \|\bar H_{\cI_*,\cI_*}-(T-1)\bSigma^*_{\cI_*}\|_2 \notag\\
    &\quad\lesssim
    \frac{1}{T^2\underline\lambda^2}\nu^2s_*\Big(C_nT\tau+\log(Tp)+\sqrt{T\log(Tp)}\Big),\label{eq: inv bd}\\
    &\|\bar H_{\cI_*,\ell} - [(T-1)\bSigma^*]_{\cI_*,\ell}\|_2
    \lesssim
    \sqrt{s_*}\nu^2\Big(C_nT\tau+\log(Tp)+\sqrt{T\log(Tp)}\Big).\notag
\end{align}
Consequently,
\begin{align*}
&\left\|\bar H_{\cI_*,\cI_*}^{-1}
\bar H_{\cI_*,\ell}\right\|_1\\
&\quad\le
\left\|(\bSigma^*_{\cI_*})^{-1}[\bSigma^*]_{\cI_*,\ell}\right\|_1\\
&\qquad+
\sqrt{s_*}\|\bar H_{\cI_*,\cI_*}^{-1}-[(T-1)\bSigma^*_{\cI_*}]^{-1}\|_2
\|\bar H_{\cI_*,\ell}-[(T-1)\bSigma^*]_{\cI_*,\ell}\|_2\\
&\qquad+
\sqrt{s_*}\|[(T-1)\bSigma^*_{\cI_*}]^{-1}\|_2
\|\bar H_{\cI_*,\ell}-[(T-1)\bSigma^*]_{\cI_*,\ell}\|_2\\
&\qquad+
\sqrt{s_*}\|\bar H_{\cI_*,\cI_*}^{-1}-[(T-1)\bSigma^*_{\cI_*}]^{-1}\|_2
\|[(T-1)\bSigma^*]_{\cI_*,\ell}\|_2\\
&\quad\lesssim
\|\bSigma^*_{\cI_*^c,\cI_*}(\bSigma^*_{\cI_*})^{-1}\|_{\infty}
+
\frac{\nu^4s_*^2}{T\underline\lambda^2}
\Big(C_nT\tau+\log(Tp)+\sqrt{T\log(Tp)}\Big).
\end{align*}
 Under the conditions that \(\tau \le c\underline\lambda^2/(C_n\nu^4s_*^2)\) and \(T\ge C\nu^8s_*^4\log(Tp)/\underline\lambda^4\) for sufficiently large \(C > 0\) and sufficiently small \(c >0\), the second term in the last inequality is no larger than \(\gamma_0 / 2\). Therefore, by Assumption~\ref{asp: mutual inch},
\begin{equation}\label{eq: approx mutual inch}
\begin{aligned}
\|\bar H_{\cI_*^c,\cI_*}
\bar H_{\cI_*,\cI_*}^{-1}\|_{\infty}
&\le 1-\gamma_0+\gamma_0/2
\le 1-\gamma_0/2 .
\end{aligned}
\end{equation}
Applying \eqref{eq: approx mutual inch}, we obtain
\begin{align*}
{\rm (I)}
&\le
\left(
1+
\|\bar H_{\cI_*^c,\cI_*}
\bar H_{\cI_*,\cI_*}^{-1}\|_{\infty}
\right)
\|\nabla_{\bbeta}\ell_{T-1}(\bbeta^*)\|_{\infty}\\
&\le
2C\nu\{\log(Tp)+\sqrt{T\log(Tp)}\},
\end{align*}
and
\[
{\rm (II)}
\le
\|\bar H_{\cI_*^c,\cI_*}
\bar H_{\cI_*,\cI_*}^{-1}\|_{\infty}
\|[\sgn(\bbeta^*)]_{\cI_*}\|_{\infty}
\le
1-\gamma_0/2.
\]

It remains to bound the weak-tail term. Since the Hessian summands are uniformly bounded entrywise by \(C\nu^2\), and using \eqref{eq: approx mutual inch},
\[
\begin{aligned}
{\rm (III)}
&\le
\left(
1+
\|\bar H_{\cI_*^c,\cI_*}
\bar H_{\cI_*,\cI_*}^{-1}\|_{\infty}
\right)
\sup_{\bbeta\in\cB_1(\bbeta^*,3\tau)}
\|\nabla_{\bbeta}^2\ell_{T-1}(\bbeta)\|_{\max}
\|[\bbeta^*]_{\cI_{\rm wk}}\|_1\\
&\le
C T\nu^2\|[\bbeta^*]_{\cI_{\rm wk}}\|_1
=
o\{\nu\sqrt{T\log(Tp)}\}
=
o(\lambda_{T-1}).
\end{aligned}
\]
When \(\lambda_{T-1}=C_\lambda\nu\sqrt{T\log(Tp)}\) for a sufficiently large \(C_\lambda>0\), the preceding displays imply
\begin{align*}
\left\|[\nabla_{\bbeta}\ell_{T-1}(\hat\bbeta_{T-1})]_{\cI_*^c}\right\|_{\infty}
&\le
\gamma_0\lambda_{T-1}/4+(1-\gamma_0/2)\lambda_{T-1}+o(\lambda_{T-1})\\
&\le
(1-\gamma_0/4)\lambda_{T-1}
<
\lambda_{T-1}.
\end{align*}
Thus the KKT condition for the off-support set \eqref{eq: kkt off support} is satisfied for \(\hat\bbeta_{T-1}\). This proves that the constructed point is the unique global solution to \eqref{eq: lasso T} and that \(\mathcal I=\mathcal I_*\).

Moreover, since \([\hat\bbeta_{T-1}]_{\cI_*^c}=0\), we have
\[
\begin{aligned}
\|\hat\bbeta_{T-1}-\bbeta^*\|_1
&=
\|[\hat\bbeta_{T-1}]_{\cI_*}-[\bbeta^*]_{\cI_*}\|_1
+
\|[\bbeta^*]_{\cI_{\rm wk}}\|_1  \\
&\le
C\nu s_*\sqrt{\frac{\log(Tp)}{\underline\lambda^2T}}
+
\|[\bbeta^*]_{\cI_{\rm wk}}\|_1 .
\end{aligned}
\]
By Assumption~\ref{asp: min signal} and the condition on $\eta_T$,
\[
    \|[\bbeta^*]_{\cI_{\rm wk}}\|_1
    =
    o\left(
        \frac{1}{\nu}\sqrt{\frac{\log(Tp)}{T}}
    \right).
\]
Since \(\cI_*\ne\emptyset\), Assumption~\ref{asp: cov} gives
\(\underline\lambda\le\nu^2s_*\). Therefore,
\[
    \|[\bbeta^*]_{\cI_{\rm wk}}\|_1
    =
    o\left(
        \nu s_*\sqrt{\frac{\log(Tp)}{\underline\lambda^2T}}
    \right).
\]
Absorbing the weak-tail term into the leading rate yields \eqref{eq: l1 rate improved}, completing the proof.
\subsection{Proof of Corollary~\ref{col: Regret bound} (Regret bound)}\label{sec: proof col regret bound}

For each $t \in [T-1]$, we have
\begin{align*}
    R(\cS_t^* \mid \bbeta^*,\bv_t,\br_t)
    -
    R(\cS_t \mid \bbeta^*,\bv_t,\br_t)
    &= R(\cS_t^* \mid \bbeta^*,\bv_t,\br_t)
    -
    R(\cS_t^* \mid \hat\bbeta_{t-1},\bv_t,\br_t) \\
    &\quad +
    R(\cS_t^* \mid \hat\bbeta_{t-1},\bv_t,\br_t)
    -
    R(\cS_t \mid \hat\bbeta_{t-1},\bv_t,\br_t) \\
    &\quad +
    R(\cS_t \mid \hat\bbeta_{t-1},\bv_t,\br_t)
    -
    R(\cS_t \mid \bbeta^*,\bv_t,\br_t) \\
    &\le
    2 \max_{\cS \in \bcS^K}
    \big|R(\cS \mid \hat\bbeta_{t-1},\bv_t,\br_t) - R(\cS \mid \bbeta^*,\bv_t,\br_t)\big| \\
    &\le
    2 \max_{\bbeta \in \RR^p}\max_{\cS \in \bcS^K}
    \|\nabla_{\bbeta} R(\cS \mid \bbeta,\bv_t,\br_t)\|_{\infty}
    \cdot
    \|\hat\bbeta_{t-1} - \bbeta^*\|_1 \\
    &\lesssim
    \nu \|\hat\bbeta_{t-1} - \bbeta^*\|_1 \cdot \max_{j \in [n]} |r_{tj}|,
\end{align*}
where the first inequality follows from the fact that
$$
R(\cS_t^* \mid \hat\bbeta_{t-1},\bv_t,\br_t)
-
R(\cS_t \mid \hat\bbeta_{t-1},\bv_t,\br_t)
\le 0,
$$
since $\cS_t$ is the maximizer of $R(\cS \mid \hat\bbeta_{t-1},\bv_t,\br_t)$ by definition, and the last inequality follows from \eqref{eq: gradient revenue}.

We next have the decomposition
\begin{align*}
    &\big|R(\cS \mid \hat\bbeta_{t-1},\bv_t,\br_t)
    -
    R(\cS \mid \bbeta^*,\bv_t,\br_t)\big| \\
    &\quad =
    \big|R(\cS \mid \hat\bbeta_{t-1},\bv_t,\br_t)
    -
    R(\cS \mid \bbeta^*,\bv_t,\br_t)\big|
    \cdot
    \big(
    \II\{\text{\eqref{eq:l1-rate} holds}\}
    +
    \II\{\text{\eqref{eq:l1-rate} fails}\}
    \big) \\
    &\quad \le
    \max_{j \in [n]} |r_{tj}|
    \frac{\nu^2 s}{\underline\lambda}
    \sqrt{\frac{\log(Tp)}{t}}
    \cdot
    \II\{\text{\eqref{eq:l1-rate} holds}\}
    +
    \max_{j \in [n]} |r_{tj}|
    \cdot
    \II\{\text{\eqref{eq:l1-rate} fails}\}.
\end{align*}
Hence,
\begin{align*}
    &\EE\Big\{
    R(\cS_t^* \mid \bbeta^*,\bv_t,\br_t)
    -
    R(\cS_t \mid \bbeta^*,\bv_t,\br_t)
    \Big\} \\
    &\quad \lesssim
    \frac{\nu^2 s}{\underline\lambda}
    \sqrt{\frac{\log(Tp)}{t}}
    \EE\Big[\max_{j \in [n]} |r_{tj}|\Big]
    +
    \frac{1}{T}
    \Big(
    \EE\Big[\max_{j \in [n]} |r_{tj}|\Big]^2
    \Big)^{1/2} \\
    &\quad \lesssim
    \frac{\nu^2 \bar\mu s}{\underline\lambda}
    \sqrt{\frac{\log(Tp)}{t}}
    +
    \frac{\bar\mu}{T}
    \lesssim
    \frac{\nu^2 \bar\mu s}{\underline\lambda}
    \sqrt{\frac{\log(Tp)}{t}},
\end{align*}
where the penultimate inequality follows from the maximal inequality for sub-Gaussian random variables and the log-Sobolev inequality for the maximum of non-centered Gaussian random variables \cite{belloni2024anticon}, together with Assumption~\ref{asp: assortment const}.

Combining the above with the fact that
$$
\sum_{t=1}^{T-1} \frac{1}{\sqrt{t}}
\lesssim
\sum_{t=1}^{T-1} \frac{1}{\sqrt{t}+\sqrt{t+1}}
=
\sum_{t=1}^{T-1} (\sqrt{t+1}-\sqrt{t})
=
\sqrt{T},
$$
the claim follows.
\subsection{Proof of Corollary~\ref{col: error decomp debias lasso} (Debiased-estimator error decomposition)}
\label{sec: proof col error decomp lasso}

By Corollary~\ref{col: unique solution decomp}, with probability at least \(1-O(T^{-1})\), the minimizer
\(\hat\bbeta_{T-1}\) of \eqref{eq: theta hat} is unique and satisfies \(\cI=\cI_*\). On this event,
\([\tilde\bbeta^{\,d}]_{\cI_*^c}=\mathbf 0\). Hence
\[
    [\tilde\bbeta^{\,d}]_{\cI_0^c}=[\bbeta^*]_{\cI_0^c}=\mathbf 0,
    \qquad
    [\tilde\bbeta^{\,d}]_{\cI_{\rm wk}}=\mathbf 0,
\]
and therefore
\[
\|[\tilde\bbeta^{\,d}]_{\cI_{\rm wk}}-[\bbeta^*]_{\cI_{\rm wk}}\|_1
=
\|[\bbeta^*]_{\cI_{\rm wk}}\|_1
=
\eta_T
\]
by Assumption~\ref{asp: min signal}.

It remains to prove the expansion on \(\cI_*\). By the definition of the debiased estimator,
\[
[\tilde\bbeta^{\,d}]_{\cI_*}
=
[\hat\bbeta_{T-1}]_{\cI_*}
-
\Big([\nabla_{\bbeta}^2\ell_{T-1}(\hat\bbeta_{T-1})]_{\cI_*,\cI_*}\Big)^{-1}
[\nabla_{\bbeta}\ell_{T-1}(\hat\bbeta_{T-1})]_{\cI_*}.
\]
Recall $\bar H$ defined in \eqref{eq: int hess}, 
\[
    \bar H
    :=
    \int_0^1
    \nabla_{\bbeta}^2\ell_{T-1}
    \left\{
        \bbeta^*+a(\hat\bbeta_{T-1}-\bbeta^*)
    \right\}
    da ,
\]
and that because both \(\bbeta^*\) and \(\hat\bbeta_{T-1}\) lie in
\(\cB_1(\bbeta^*,3\tau)\), all uniform Hessian bounds apply to \(\bar H\). By the fundamental theorem of calculus,
\[
    \nabla_{\bbeta}\ell_{T-1}(\hat\bbeta_{T-1})
    -
    \nabla_{\bbeta}\ell_{T-1}(\bbeta^*)
    =
    \bar H(\hat\bbeta_{T-1}-\bbeta^*).
\]
Since \([\hat\bbeta_{T-1}]_{\cI_*^c}=\mathbf 0\), we have
\[
    [\hat\bbeta_{T-1}-\bbeta^*]_{\cI_{\rm wk}}
    =
    -[\bbeta^*]_{\cI_{\rm wk}},
    \qquad
    [\hat\bbeta_{T-1}-\bbeta^*]_{\cI_0^c}
    =
    \mathbf 0.
\]
Therefore,
\[
\begin{aligned}
[\nabla_{\bbeta}\ell_{T-1}(\hat\bbeta_{T-1})]_{\cI_*}
&=
[\nabla_{\bbeta}\ell_{T-1}(\bbeta^*)]_{\cI_*}
+
\bar H_{\cI_*,\cI_*}
\{[\hat\bbeta_{T-1}]_{\cI_*}-[\bbeta^*]_{\cI_*}\}\\
&\quad
-
\bar H_{\cI_*,\cI_{\rm wk}}[\bbeta^*]_{\cI_{\rm wk}}.
\end{aligned}
\]
It follows that
\[
\begin{aligned}
&[\tilde\bbeta^{\,d}]_{\cI_*}-[\bbeta^*]_{\cI_*}
+
\frac{1}{T-1}(\bSigma_{\cI_*}^*)^{-1}
[\nabla_{\bbeta}\ell_{T-1}(\bbeta^*)]_{\cI_*}\\
&=
-\underbrace{
\left\{
\Big([\nabla_{\bbeta}^2\ell_{T-1}(\hat\bbeta_{T-1})]_{\cI_*,\cI_*}\Big)^{-1}
-
\frac{1}{T-1}(\bSigma_{\cI_*}^*)^{-1}
\right\}
[\nabla_{\bbeta}\ell_{T-1}(\hat\bbeta_{T-1})]_{\cI_*}
}_{{\rm (I)}}\\
&\quad
-\underbrace{
\left\{
\frac{1}{T-1}(\bSigma_{\cI_*}^*)^{-1}
\bar H_{\cI_*,\cI_*}
-
\Ib_{s_*}
\right\}
\{[\hat\bbeta_{T-1}]_{\cI_*}-[\bbeta^*]_{\cI_*}\}
}_{{\rm (II)}}\\
&\quad
+\underbrace{
\frac{1}{T-1}(\bSigma_{\cI_*}^*)^{-1}
\bar H_{\cI_*,\cI_{\rm wk}}
[\bbeta^*]_{\cI_{\rm wk}}
}_{{\rm (III)}}.
\end{aligned}
\]
We bound the three terms separately.

First, we record a refined Hessian bound at \(\hat\bbeta_{T-1}\) and along the segment between \(\bbeta^*\) and \(\hat\bbeta_{T-1}\). Using the decomposition
\[
\begin{aligned}
\|\nabla_{\bbeta}^2\ell_{T-1}(\bbeta)-(T-1)\bSigma^*\|_{\max}
&\le
\underbrace{\|\nabla_{\bbeta}^2\ell_{T-1}(\bbeta)-\nabla_{\bbeta}^2\ell_{T-1}(\bbeta^*)\|_{\max}}_{\rm I}\\
&\quad+
\underbrace{\left\|\nabla_{\bbeta}^2\ell_{T-1}(\bbeta^*)-\sum_{t'=1}^{T-1}\EE\{\bSigma_{t'}(\hat\bbeta_{t'-1})\mid\hat\bbeta_{t'-1}\}\right\|_{\max}}_{\rm II}\\
&\quad+
\underbrace{\left\|\sum_{t'=1}^{T-1}\EE\{\bSigma_{t'}(\hat\bbeta_{t'-1})\mid\hat\bbeta_{t'-1}\}-(T-1)\bSigma^*\right\|_{\max}}_{\rm III},
\end{aligned}
\]
we improve the first and third terms by using the estimation rates. For the first term, by the Hessian Lipschitz bound shown in \eqref{eq: hess liptz} and \eqref{eq: l1 rate improved},
\[
    {\rm I}
    \lesssim
    T\nu^3\|\hat\bbeta_{T-1}-\bbeta^*\|_1
    \lesssim
    \frac{\nu^4s_*\sqrt{T\log(Tp)}}{\underline\lambda}.
\]
By \eqref{eq: term II bd}, the martingale term satisfies
\[
    {\rm II}
    \lesssim
    \nu^2\{\log(Tp)+\sqrt{T\log(Tp)}\}.
\]
For the third term, the proof of Claim~\ref{claim: cont bd} gives the sharper pointwise form
\[
    \left\|
        \EE\{\bSigma_{t'}(\hat\bbeta_{t'-1})\mid\hat\bbeta_{t'-1}\}
        -
        \bSigma^*
    \right\|_{\max}
    \lesssim
    C_n\nu^2\|\hat\bbeta_{t'-1}-\bbeta^*\|_1+\nu^2/T .
\]
Combining this with the uniform rate in Theorem~\ref{thm:lasso-rates} gives
\[
\begin{aligned}
    {\rm III}
    &\lesssim
    C_n\nu^2
    \sum_{t'=1}^{T-1}
    \|\hat\bbeta_{t'-1}-\bbeta^*\|_1
    +
    \nu^2\\
    &\lesssim
    \frac{C_n\nu^3s\sqrt{T\log(Tp)}}{\underline\lambda}
    +
    \nu^2\log(Tp),
\end{aligned}
\]
where the initial time indices below the range of Theorem~\ref{thm:lasso-rates} are absorbed into the logarithmic term under the stated lower bound on \(T\) and upper bound on \(\tau\). Therefore,
\begin{align}
&\left\|[\nabla_{\bbeta}^2\ell_{T-1}(\hat\bbeta_{T-1})]_{\cI_*,\cI_*}
-(T-1)\bSigma_{\cI_*}^*\right\|_2
\notag\\
&\qquad\lesssim
s_*
\left[
    \nu^2\{\log(Tp)+\sqrt{T\log(Tp)}\}
    +
    \frac{C_n\nu^3s\sqrt{T\log(Tp)}}{\underline\lambda}
    +
    \frac{\nu^4s_*\sqrt{T\log(Tp)}}{\underline\lambda}
\right],
\label{eq: refined hessian support bound}
\end{align}
and the same bound holds with
\([\nabla_{\bbeta}^2\ell_{T-1}(\hat\bbeta_{T-1})]_{\cI_*,\cI_*}\) replaced by
\(\bar H_{\cI_*,\cI_*}\).

By \eqref{eq: refined hessian support bound} and \eqref{eq: min eigen sig support},
\[
\begin{aligned}
&\left\|
\Big([\nabla_{\bbeta}^2\ell_{T-1}(\hat\bbeta_{T-1})]_{\cI_*,\cI_*}\Big)^{-1}
-
\frac{1}{T-1}(\bSigma_{\cI_*}^*)^{-1}
\right\|_2\\
&\qquad\lesssim
\frac{s_*}{T^2\underline\lambda^2}
\left[
    \nu^2\{\log(Tp)+\sqrt{T\log(Tp)}\}
    +
    \frac{C_n\nu^3s\sqrt{T\log(Tp)}}{\underline\lambda}
    +
    \frac{\nu^4s_*\sqrt{T\log(Tp)}}{\underline\lambda}
\right].
\end{aligned}
\]
By the KKT condition on \(\cI_*\),
\[
    \|[\nabla_{\bbeta}\ell_{T-1}(\hat\bbeta_{T-1})]_{\cI_*}\|_2
    \le
    \lambda_{T-1}\sqrt{s_*}
    \lesssim
    \nu\sqrt{s_*T\log(Tp)}.
\]
Thus
\[
\|{\rm (I)}\|_2
\lesssim
\frac{\nu^3s_*^{3/2}\log(Tp)}{T\underline\lambda^2}
\left\{
    1+\frac{C_n\nu s+\nu^2s_*}{\underline\lambda}
\right\}.
\]

Next, using \eqref{eq: refined hessian support bound} again,
\[
\begin{aligned}
&\left\|
\frac{1}{T-1}(\bSigma_{\cI_*}^*)^{-1}
\bar H_{\cI_*,\cI_*}
-\Ib_{s_*}
\right\|_2\\
&\quad \lesssim
\frac{s_*}{T\underline\lambda}
\left[
    \nu^2\{\log(Tp)+\sqrt{T\log(Tp)}\}
    +
    \frac{C_n\nu^3s\sqrt{T\log(Tp)}}{\underline\lambda}
    +
    \frac{\nu^4s_*\sqrt{T\log(Tp)}}{\underline\lambda}
\right].
\end{aligned}
\]
Together with \eqref{eq: support l2-rate},
\[
\|[\hat\bbeta_{T-1}]_{\cI_*}-[\bbeta^*]_{\cI_*}\|_2
\lesssim
\nu\sqrt{\frac{s_*\log(Tp)}{\underline\lambda^2T}},
\]
this yields
\[
\|{\rm (II)}\|_2
\lesssim
\frac{\nu^3s_*^{3/2}\log(Tp)}{T\underline\lambda^2}
\left\{
    1+\frac{C_n\nu s+\nu^2s_*}{\underline\lambda}
\right\}.
\]

Finally, since the Hessian summands are uniformly bounded entrywise by \(C\nu^2\),
\[
\begin{aligned}
\|{\rm (III)}\|_2
&\le
\frac{1}{T-1}
\|(\bSigma_{\cI_*}^*)^{-1}\|_2
\|\bar H_{\cI_*,\cI_{\rm wk}}[\bbeta^*]_{\cI_{\rm wk}}\|_2\\
&\lesssim
\frac{1}{T\underline\lambda}
\sqrt{s_*}\,
T\nu^2
\|[\bbeta^*]_{\cI_{\rm wk}}\|_1\\
&=
\frac{\nu^2\sqrt{s_*}}{\underline\lambda}
\|[\bbeta^*]_{\cI_{\rm wk}}\|_1 .
\end{aligned}
\]
Combining the bounds for \({\rm (I)}\), \({\rm (II)}\), and \({\rm (III)}\) gives the stated bound on \(\|\Rb\|_2\), and completes the proof.
\subsection{Proof of Theorem~\ref{thm: valid p} (Type-I validity)}
\label{sec: proof thm type I}
We begin with three auxiliary lemmas.
\begin{lemma}\label{lm: MTG coupling}
Suppose Assumptions~\ref{asp: abs cont of r}, \ref{asp: cov}--\ref{asp: min signal} hold. Assume that
\[
    C \nu s \sqrt{\frac{\log(Tp)}{\underline\lambda^2 T}}
    \le \tau
    \le
    c\frac{\underline\lambda^2}{C_n \nu^4 s^2},
    \qquad
    T \ge
    C\frac{\nu^8 s^4 \log (Tp)}{\underline\lambda^4},
\]
where \(C>0\) is sufficiently large and \(c>0\) is sufficiently small. If
\[
    \lambda_{T-1}
    =
    C_{\lambda}\nu\sqrt{T\log(Tp)}
\]
with \(C_{\lambda}>0\) sufficiently large, then, on an event with probability at least
\(1-O(T^{-1})\), for any \(\eta>0\) there exists a random vector
\(\bxi\mid \hat\bbeta_0\sim\cN(0,\Ib_{s_*})\) such that
\begin{equation}\label{eq: MTG coupling}
\begin{aligned}
\PP\big(\|\boldsymbol{S}-\bxi\|_2>\eta \mid \hat\bbeta_0 \big)
&\lesssim
\left(
    \frac{s_*^{5/2}\nu^3}
    {T^{1/2}\underline\lambda^{3/2}\eta^3}
\right)^{1/3} +
\left(
    \frac{
        C_n\nu^3s\,s_*^2\sqrt{\log(Tp)}
    }{
        \underline\lambda^2\sqrt T\,\eta^2
    }
\right)^{1/3},
\end{aligned}
\end{equation}
where
\[
\boldsymbol{S}
:=
-
(T-1)^{-1/2}\sum_{t=1}^{T-1}
(\bSigma_{\cI_*}^*)^{-1/2}
\Big\{
[\bv_{t,i_t}]_{\cI_*}
-
\EE_{\bbeta^*,t,\cS_t}\big([\bv_{t,i_t}]_{\cI_*}\big)
\Big\}.
\]
\end{lemma}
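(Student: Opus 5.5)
The plan is to apply a multivariate martingale central limit theorem with an explicit coupling rate, in the form of the martingale Gaussian coupling of \cite{maias2025mtgcoupling}. Write $\boldsymbol{S}=\sum_{t=1}^{T-1} X_t$ with
\[
X_t := -(T-1)^{-1/2}(\bSigma_{\cI_*}^*)^{-1/2}\bigl\{[\bv_{t,i_t}]_{\cI_*}-\EE_{\bbeta^*,t,\cS_t}([\bv_{t,i_t}]_{\cI_*})\bigr\}.
\]
By Assumption~\ref{asp: dpd structure}, together with the fact that $\cS_t$ is $\sigma(\cH_{t-1},\bv_t,\br_t)$-measurable, $\{X_t\}$ is a martingale difference sequence with respect to $\cF_t:=\sigma(\cH_t,\bv_{t+1},\br_{t+1})$. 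Its conditional covariance is
\[
V_t=(T-1)^{-1}(\bSigma_{\cI_*}^*)^{-1/2}[\bSigma_t(\hat\bbeta_{t-1})]_{\cI_*,\cI_*}(\bSigma_{\cI_*}^*)^{-1/2}.
\]
The coupling result bounds $\PP(\|\boldsymbol S-\bxi\|_2>\eta)$ by a third-moment (Lindeberg) term plus a term governed by the deviation of the predictable quadratic variation $\sum_t V_t$ from $\Ib_{s_*}$. The two terms in \eqref{eq: MTG coupling} should correspond exactly to these two pieces, each raised to the power $1/3$ through a Strassen-type or smoothing argument.

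\textbf{Step 1 (third moments).} Since $\|\bv_{tj}\|_\infty\le\nu$, we have $\|[\bv_{t,i_t}]_{\cI_*}\|_2\le\nu\sqrt{s_*}$. With $\|(\bSigma_{\cI_*}^*)^{-1/2}\|_2\le\underline\lambda^{-1/2}$, this gives
\[
\sum_t\EE\|X_t\|_2^3\lesssim T\cdot T^{-3/2}\,\nu^3 s_*^{3/2}\,\underline\lambda^{-3/2}.
\]
The dimensional factor in the coupling bound (an extra $s_*$ from the smoothing in $\RR^{s_*}$) then yields the first term, $\bigl(s_*^{5/2}\nu^3/(T^{1/2}\underline\lambda^{3/2}\eta^3)\bigr)^{1/3}$.

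\textbf{Step 2 (quadratic variation).} Decompose
\[
\sum_t V_t-\Ib_{s_*}=(T-1)^{-1}(\bSigma^*_{\cI_*})^{-1/2}\Bigl[\sum_t\bigl\{\bSigma_t(\hat\bbeta_{t-1})-\EE(\bSigma_t(\hat\bbeta_{t-1})\mid\hat\bbeta_{t-1})\bigr\}+\sum_t\bigl\{\EE(\bSigma_t(\hat\bbeta_{t-1})\mid\hat\bbeta_{t-1})-\bSigma^*\bigr\}\Bigr]_{\cI_*,\cI_*}(\bSigma^*_{\cI_*})^{-1/2}.
\]
This is the same decomposition as terms II and III in the proof of Corollary~\ref{col: error decomp debias lasso}. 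For the first bracket I use the i.i.d.\ contexts (Assumption~\ref{asp: abs cont of r}) with Azuma/Freedman, giving order $\nu^2\sqrt{T\log(Tp)}$ entrywise, as in \eqref{eq: term II bd}. For the second bracket I use the pointwise continuity bound from Claim~\ref{claim: cont bd}, namely $C_n\nu^2\|\hat\bbeta_{t-1}-\bbeta^*\|_1+\nu^2/T$, and sum the rates of Theorem~\ref{thm:lasso-rates}; this gives $C_n\nu^3 s\sqrt{T\log(Tp)}/\underline\lambda$. Converting max-norm to operator norm costs a factor $s_*$, and normalizing by $T\underline\lambda$ gives
\[
\Bigl\|\sum_t V_t-\Ib\Bigr\|_2\lesssim C_n\nu^3 s s_*\sqrt{\log(Tp)}/(\underline\lambda^2\sqrt T),
\]
which holds on the high-probability event. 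The early indices $t$ below the range of Theorem~\ref{thm:lasso-rates} are absorbed using the lower bound on $T$ and the bound on $\tau$. Feeding this into the coupling bound, where the variance-mismatch term enters as $(s_*\|\sum V_t-\Ib\|/\eta^2)^{1/3}$, gives the second term.

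\textbf{Main obstacle.} The main difficulty is that $\sum_tV_t$ is random, so the coupling theorem must be applied in a form that allows a random predictable covariance. I would first try the standard device of completing the quadratic variation: append Gaussian increments so that the total covariance equals $\Ib_{s_*}$, via a stopping time at the first $t$ where $\sum_{t'\le t}V_{t'}$ exits an $\Ib$-neighborhood. The error from the appended part is then controlled by the deviation bound on the high-probability event, and the complement of that event contributes $O(T^{-1})$, which is absorbed into the stated bound. Finally, $\bxi$ is constructed on an enlarged probability space, conditionally on $\hat\bbeta_0$, so that its law is exactly $\cN(0,\Ib_{s_*})$.
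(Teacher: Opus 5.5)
Your proposal is correct and follows essentially the paper's route: Proposition~2.1 of \cite{maias2025mtgcoupling}, where $\phi_2(s_*)^2=2s_*$ supplies the extra factor $s_*$, applied with a third-moment term of order $T^{-1/2}\underline\lambda^{-3/2}s_*^{3/2}\nu^3$ and a quadratic-variation deviation $\|\Omega\|_2$ controlled by the pointwise continuity bound of Claim~\ref{claim: cont bd} summed against Theorem~\ref{thm:lasso-rates}, with the early indices absorbed through the upper bound on $\tau$. The differences are minor. The paper uses the coarser filtration $\sigma(\cH_t)$, so its $V_t$ is already the context-averaged matrix $\EE\{\bSigma_t(\hat\bbeta_{t-1})\mid\hat\bbeta_{t-1}\}$: no Freedman fluctuation term arises, and $V_t\preceq (C/T)\Ib_{s_*}$ directly controls the Gaussian part $\EE\|V_t^{1/2}Z_t\|_2^3$ of $\beta_{2,2}$, which you omitted and, with your realized-context $V_t$, should bound via $\mathrm{tr}\,V_t\lesssim s_*\nu^2/(T\underline\lambda)$; the paper also plugs the high-probability bound on $\|\Omega\|_2$ straight into the coupling proposition rather than using your stopping-and-completion device.
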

\begin{proof}
    See Section~\ref{sec: proof lm MTG coupling}.
\end{proof}
Under Assumption~\ref{asp: assortment const}, with probability at least \(1-O(T^{-1})\) over
\((\bv_T,\br_T)\), the event
\begin{equation}\label{eq: cE r bd T}
    \cE
    :=
    \left\{\max_{j\in[n]}|r_{Tj}|\le 2\bar\mu\right\}
\end{equation}
holds. Conditional on \(\cE\), the next lemma provides uniform derivative bounds together
with a linear expansion of the revenue function.

\begin{lemma}\label{lm: rev decomp}
Under the same conditions as Corollary~\ref{col: error decomp debias lasso}, conditional on
\(\cE\),
\begin{align}
&\max_{\cS\in\bcS^K}\ \sup_{\bbeta\in\R^p}
\big\|\nabla_{\bbeta} R(\cS\mid \bbeta,\bv_T,\br_T)\big\|_{\infty}
\le 4\nu\bar\mu, \label{eq: grad T bd}\\
&\max_{\cS\in\bcS^K}\ \sup_{\bbeta\in\R^p}
\big\|\nabla_{\bbeta}^2 R(\cS\mid \bbeta,\bv_T,\br_T)\big\|_{\max}
\lesssim \nu^2\bar\mu. \label{eq: hes T bd}
\end{align}
Moreover, on the event \(\cE\), with probability at least \(1-O(T^{-1})\) over
\(\cH_{T-1}\mid\hat\bbeta_0\),
\begin{equation}\label{eq: rev decomp}
\begin{aligned}
&\max_{\cS\in\bcS^K}
\Bigg|
\hat R_{T,\cS}
-
R^*_{T,\cS}
+
\frac{1}{\sqrt{T-1}}\,
g_{\cS}^{\top}
(\bSigma_{\cI_*}^{*})^{-1/2}\boldsymbol S
\Bigg|\\
&\qquad\lesssim
\bar\mu
\left[
\frac{\nu^4s_*^2\log(Tp)}{T\underline\lambda^2}
\left\{
    1+\frac{C_n\nu s+\nu^2s_*}{\underline\lambda}
\right\}
+
\frac{\nu^3s_*}{\underline\lambda}
\|[\boldsymbol\beta^*]_{\cI_{\rm wk}}\|_1
\right],
\end{aligned}
\end{equation}
where $ g_{\cS}:= \big[\nabla_{\bbeta}R(\cS\mid \bbeta^*,\bv_T,\br_T)\big]_{\cI_*}$.
\end{lemma}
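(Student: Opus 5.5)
We need to prove Lemma~\ref{lm: rev decomp}: the derivative bounds \eqref{eq: grad T bd}--\eqref{eq: hes T bd} and the linear expansion \eqref{eq: rev decomp}.

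\textbf{Derivative bounds.} These are deterministic on $\cE$. From \eqref{eq: gradient revenue}, write $P_j=\PP_{\bbeta,\bv_T}(j\mid\cS)$. Then
\[
\nabla_{\bbeta}R=\sum_{j\in\cS}P_j r_{Tj}\bv_{Tj}-\Big(\sum_{j\in\cS}P_jr_{Tj}\Big)\Big(\sum_{j'\in\cS}P_{j'}\bv_{Tj'}\Big).
\]
Since $\sum_{j\in\cS}P_j\le1$, $\max_j|r_{Tj}|\le2\bar\mu$ on $\cE$, and $\|\bv_{Tj}\|_\infty\le\nu$, each of the two terms has sup-norm at most $2\nu\bar\mu$. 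This gives the bound $4\nu\bar\mu$ uniformly in $\bbeta$ and $\cS$. For the Hessian, I would differentiate once more using $\nabla_{\bbeta}P_j=P_j(\bv_{Tj}-\sum_{k}P_k\bv_{Tk})$. Every entry is then a finite combination of terms of the form $P$-weighted averages of $r\,v_av_b$, with at most three nested probability sums. Each such term is bounded by a constant times $\nu^2\bar\mu$, which yields \eqref{eq: hes T bd}.

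\textbf{Linear expansion.} Work on the intersection of $\cE$ with the high-probability events of Corollaries~\ref{col: unique solution decomp} and \ref{col: error decomp debias lasso}. On these events $\cI=\cI_*$, $\tilde\bbeta^d$ vanishes off $\cI_*$, and $\tilde\bbeta^d-\bbeta^*=:\Delta$ satisfies:
\begin{itemize}
\item $[\Delta]_{\cI_0^c}=0$;
\item $[\Delta]_{\cI_{\rm wk}}=-[\bbeta^*]_{\cI_{\rm wk}}$;
\item $[\Delta]_{\cI_*}=-(T-1)^{-1}(\bSigma^*_{\cI_*})^{-1}[\nabla\ell_{T-1}(\bbeta^*)]_{\cI_*}+\Rb$.
\end{itemize}
The score $\nabla\ell_{T-1}(\bbeta^*)$ is $-\sum_t\{\bv_{t,i_t}-\EE_{\bbeta^*,t,\cS_t}\bv_{t,i_t}\}$. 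Hence by definition of $\boldsymbol S$,
\[
-(T-1)^{-1}(\bSigma^*_{\cI_*})^{-1}[\nabla\ell_{T-1}(\bbeta^*)]_{\cI_*}=-(T-1)^{-1/2}(\bSigma^*_{\cI_*})^{-1/2}\boldsymbol S.
\]
Take a second-order Taylor expansion of $R(\cS\mid\cdot)$ at $\bbeta^*$:
\[
\hat R_{T,\cS}-R^*_{T,\cS}=g_{\cS}^\top[\Delta]_{\cI_*}+[\nabla R]_{\cI_{\rm wk}}^\top[\Delta]_{\cI_{\rm wk}}+\tfrac12\Delta^\top\nabla^2R(\bar\bbeta)\Delta.
\]
The first term equals $-(T-1)^{-1/2}g_\cS^\top(\bSigma^*_{\cI_*})^{-1/2}\boldsymbol S+g_\cS^\top\Rb$, and the leading piece is exactly the term subtracted in \eqref{eq: rev decomp}. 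The remaining pieces are bounded uniformly in $\cS$ as follows.
\begin{itemize}
\item $|g_\cS^\top\Rb|\le\|g_\cS\|_2\|\Rb\|_2\le4\nu\bar\mu\sqrt{s_*}\|\Rb\|_2$. By Corollary~\ref{col: error decomp debias lasso} this is at most $\bar\mu\nu^4s_*^2\log(Tp)/(T\underline\lambda^2)\{1+(C_n\nu s+\nu^2s_*)/\underline\lambda\}+\bar\mu\nu^3s_*\eta_T/\underline\lambda$.
\item The weak-support term is at most $4\nu\bar\mu\eta_T$. Since $\underline\lambda\le\nu^2s_*$, this is absorbed into the $\nu^3s_*\eta_T/\underline\lambda$ term.
\item The quadratic term is at most $\nu^2\bar\mu\|\Delta\|_1^2$, using \eqref{eq: hes T bd} and the $\ell_\infty$--$\ell_1$ duality. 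Bound $\|\Delta\|_1\le\sqrt{s_*}\|[\Delta]_{\cI_*}\|_2+\eta_T$.
\end{itemize}

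\textbf{Main obstacle.} The delicate step is controlling $\|[\Delta]_{\cI_*}\|_2$ for the debiased estimator, which is needed in the quadratic term. It requires a high-probability bound on $\|\boldsymbol S\|_2\lesssim\sqrt{s_*\log T}$ (or $\sqrt{s_*}+\sqrt{\log T}$). I would obtain this from a martingale Bernstein/Freedman bound coordinatewise on $[\nabla\ell_{T-1}(\bbeta^*)]_{\cI_*}$, as in \eqref{eq: grad rate}. That bound gives $\|[\nabla\ell_{T-1}(\bbeta^*)]_{\cI_*}\|_2\lesssim\nu\sqrt{s_*T\log(Tp)}$, hence $\|[\Delta]_{\cI_*}\|_2\lesssim\nu\sqrt{s_*\log(Tp)/(T\underline\lambda^2)}+\|\Rb\|_2$. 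The quadratic term is then $\lesssim\bar\mu\nu^4s_*^2\log(Tp)/(T\underline\lambda^2)$ plus cross terms in $\eta_T$ and $\|\Rb\|_2$. These are of smaller order under the sample-size conditions and fit inside the stated bound.

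A union bound over the $O(T^{-1})$-probability failure events finishes the proof. No union over $\cS$ is needed, since all the bounds above are deterministic in $\cS$ given the events.
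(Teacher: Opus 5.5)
Your proof is correct and follows the paper in every step except one. The shared steps are the explicit gradient and Hessian bounds on $\cE$ and the second-order Taylor expansion at $\bbeta^*$. The paper also handles $g_\cS^\top\Rb$ as you do, via $\|g_\cS\|_2\le 4\nu\bar\mu\sqrt{s_*}$, and absorbs the weak-support first-order term the same way, using $\underline\lambda\le\nu^2 s_*$. The difference is how you bound $\|\tilde\bbeta^{\,d}-\bbeta^*\|_1$ in the quadratic term. The paper works directly from the one-step definition: it adds \eqref{eq: support l1-rate}, the weak tail $\eta_T$, and the correction term. The correction term is at most $\sqrt{s_*}\cdot\{2/((T-1)\underline\lambda)\}\cdot\lambda_{T-1}\sqrt{s_*}\lesssim \nu s_*\sqrt{\log(Tp)/(\underline\lambda^2 T)}$, by the KKT identity on $\cI_*$ and \eqref{eq: min eigen sig support}, so $\Rb$ never enters. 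You instead go through the expansion of Corollary~\ref{col: error decomp debias lasso}. That route needs a score bound, but this is already \eqref{eq: grad rate}, so your ``main obstacle'' requires no new martingale argument. It also generates extra terms such as $\nu^2\bar\mu s_*\|\Rb\|_2^2$, which you assert are lower order without checking. They are, but controlling the $C_n\nu s/\underline\lambda$ part of $\|\Rb\|_2$ needs more than $T\ge C\nu^8 s^4\log(Tp)/\underline\lambda^4$. It uses the two-sided condition $C\nu s\sqrt{\log(Tp)/(\underline\lambda^2T)}\le\tau\le c\underline\lambda^2/(C_n\nu^4 s^2)$, which forces $T\gtrsim C_n^2\nu^{10}s^6\log(Tp)/\underline\lambda^6$, together with $C_n\gtrsim\nu$ and $\underline\lambda\le\nu^2$. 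So the paper's route is shorter and avoids this bookkeeping. Yours relies only on the conclusion of the linear expansion plus the score bound, not on the KKT structure of $\hat\bbeta_{T-1}$.
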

\begin{proof}
    See Section~\ref{sec: proof lm rev decomp}.
\end{proof}
\begin{lemma}\label{lm: sphere coverage}
Let \(s_*\ge2\), and let \(\zeta_1,\ldots,\zeta_m\) be i.i.d. uniformly distributed on the unit sphere \(S^{s_*-1}\). Fix \(0<\eps\le1\) and any \(\eb\in S^{s_*-1}\). Then, with probability at least
\[
1-\exp\left\{-m\sqrt{\frac{\pi}{8s_*}}\left(\frac{2\eps}{\pi}\right)^{s_*-1}\right\},
\]
there exists \(i\in[m]\) such that \(\|\eb-\zeta_i\|_2\le\eps\).
\end{lemma}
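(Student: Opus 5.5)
The plan is to reduce the claim to a lower bound on the normalized surface measure of a single spherical cap, and then use independence. Let $q:=\PP(\|\eb-\zeta_1\|_2\le\eps)$. By rotation invariance of the uniform law on $S^{s_*-1}$, $q$ does not depend on $\eb$. Since the $\zeta_i$ are i.i.d.,
\[
\PP\bigl(\forall i\in[m]:\ \|\eb-\zeta_i\|_2>\eps\bigr)=(1-q)^m\le e^{-mq}.
\]
It therefore suffices to show
\[
q\ \ge\ \sqrt{\frac{\pi}{8s_*}}\left(\frac{2\eps}{\pi}\right)^{s_*-1}.
\]

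To compute $q$, I will use the angle $\varphi\in[0,\pi]$ between $\zeta_1$ and $\eb$. For $s_*\ge2$ its density is $\sin^{s_*-2}\varphi/B$, where
\[
B:=\int_0^\pi\sin^{s_*-2}\varphi\,d\varphi=\frac{\sqrt\pi\,\Gamma((s_*-1)/2)}{\Gamma(s_*/2)}.
\]
The chord condition $\|\eb-\zeta_1\|_2\le\eps$ is equivalent to $\varphi\le\theta:=2\arcsin(\eps/2)$. Since $\arcsin x\ge x$, we have $\theta\ge\eps$. Since $\eps\le1$, we also have $\theta\le\pi/3\le\pi/2$, so Jordan's inequality $\sin\varphi\ge 2\varphi/\pi$ applies on $[0,\theta]$. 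This gives
\[
qB\ \ge\ \int_0^{\eps}\Bigl(\frac{2\varphi}{\pi}\Bigr)^{s_*-2}d\varphi
=\frac{\pi}{2(s_*-1)}\Bigl(\frac{2\eps}{\pi}\Bigr)^{s_*-1}.
\]

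It then remains to show the upper bound
\[
B\le \frac{\sqrt{2\pi s_*}}{s_*-1}.
\]
This is the step needing care, since it is exactly where the constant $\sqrt{\pi/(8s_*)}$ comes from. I would use Kershaw's Gamma-ratio inequality
\[
\frac{\Gamma(x+1/2)}{\Gamma(x)}\ \ge\ \sqrt{x-\tfrac14},\qquad x>\tfrac14,
\]
applied with $x=(s_*-1)/2\ge 1/2$. This yields
\[
B\le \frac{2\sqrt\pi}{\sqrt{2s_*-3}}.
\]
The inequality $2/\sqrt{2s_*-3}\le\sqrt{2s_*}/(s_*-1)$ is equivalent to $4(s_*-1)^2\le 2s_*(2s_*-3)$, which simplifies to $s_*\ge2$. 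This is precisely the hypothesis of the lemma.

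Combining the two displays gives
\[
q\ \ge\ \frac{\pi}{2(s_*-1)}\cdot\frac{s_*-1}{\sqrt{2\pi s_*}}\Bigl(\frac{2\eps}{\pi}\Bigr)^{s_*-1}=\sqrt{\frac{\pi}{8s_*}}\Bigl(\frac{2\eps}{\pi}\Bigr)^{s_*-1},
\]
and the lemma follows from the first display.

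If one prefers to avoid citing Kershaw, the Gamma-ratio bound can instead be checked directly at $x=1/2$ and $x=1$. Log-convexity of $\Gamma$ (Wendel's inequality) together with an induction step $x\mapsto x+1$ then propagates it to all half-integers $x\ge1/2$, which are the only values needed.
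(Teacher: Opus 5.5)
Your proof is correct and follows the paper's route: the normalized cap measure as $\int_0^{2\arcsin(\eps/2)}\sin^{s_*-2}\psi\,d\psi$ divided by the Wallis integral, Jordan's inequality on $[0,\eps]$, the Gamma-ratio lower bound $\Gamma(s_*/2)/\Gamma((s_*-1)/2)\ge (s_*-1)/\sqrt{2s_*}$, and finally $1-x\le e^{-x}$. The only difference is minor: the paper gets that Gamma-ratio bound in one step from Wendel's inequality, $\Gamma(x+\tfrac12)/\Gamma(x)\ge x/\sqrt{x+1/2}$. You instead use Kershaw's sharper bound followed by a short comparison that needs $s_*\ge2$. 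This is fine, because you only apply it at $x=(s_*-1)/2\ge 1/2$.
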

\begin{proof}
    See Section~\ref{sec: proof lm sphere coverage}.
\end{proof}
Note that if \(s_*=1\), then \(S^0=\{-1,1\}\), and the deterministic choice \(\zeta_1=1\) and \(\zeta_2=-1\) covers \(S^0\) exactly. Hence the direction-sampling argument
is trivial in this case. For simplicity, throughout the remainder of the proof we assume \(s_*\ge2\).

We work on the event on which Lemmas~\ref{lm: MTG coupling} and
\ref{lm: rev decomp}, Corollary~\ref{col: unique solution decomp}, and the event
\(\cE\) in \eqref{eq: cE r bd T} all hold. This event has probability at least
\(1-O(T^{-1})\). By Corollary~\ref{col: unique solution decomp}, on this event
\[
    \hat s=s_*,
    \qquad
    \cI=\cI_* .
\]

When \(p_m=1\), the result holds trivially. We therefore focus on the case
\(p_m<1\). We first show that the quantity in \eqref{eq: inf lambda} admits the equivalent representation
\begin{equation}\label{eq: inf lambda 1}
\begin{aligned}
\tilde\cU_T
&:=
\inf\Bigg\{a\ge 0:\ 
\sup_{b \in [0,a]}\max_{1\le i\le m}
\Big[
\max_{\cS\in\bcS_0}\big(\hat R_{T,\cS}+b\,\hat g_{\cS}^\top \widehat\bTheta^{1/2}\zeta_i\big)
-
\Big.\\
&\quad \Big.
\max_{\cS\notin\bcS_0}\big(\hat R_{T,\cS}+b\,\hat g_{\cS}^\top \widehat\bTheta^{1/2}\zeta_i\big)
\Big]
\ge -\kappa
\Bigg\}.
\end{aligned}
\end{equation}
For any $\delta > 0$, by construction, 
\begin{align*}
    \cU_T + \delta &\in \Bigg\{a\ge 0:\ 
\begin{aligned}[t]
&\max_{1\le i\le m}
\Big[
\max_{\cS\in\bcS_0}\big(\hat R_{T,\cS}+a\,\hat g_{\cS}^\top \widehat\bTheta^{1/2}\zeta_i\big)
-
\Big.\\
&\quad \Big.
\max_{\cS\notin\bcS_0}\big(\hat R_{T,\cS}+a\,\hat g_{\cS}^\top \widehat\bTheta^{1/2}\zeta_i\big)
\Big]
\ge -\kappa \Bigg\}
\end{aligned}
\\
    &\subseteq \Bigg\{a\ge 0:\ 
\begin{aligned}[t]
&\sup_{b \in [0,a]} \max_{1\le i\le m}
\Big[
\max_{\cS\in\bcS_0}\big(\hat R_{T,\cS}+b\,\hat g_{\cS}^\top \widehat\bTheta^{1/2}\zeta_i\big)
-
\Big.\\
&\quad \Big.
\max_{\cS\notin\bcS_0}\big(\hat R_{T,\cS}+b\,\hat g_{\cS}^\top \widehat\bTheta^{1/2}\zeta_i\big)
\Big]
\ge -\kappa \Bigg\}
\end{aligned}
,
\end{align*}
which gives $\cU_T + \delta \ge \tilde\cU_T$. 

Similarly, since $\cU_T - \delta < \cU_T$, if there exists a $b \in [0,\cU_T - \delta]$ such that 
$$
\max_{1\le i\le m}
\Big[
\max_{\cS\in\bcS_0}\big(\hat R_{T,\cS}+b\,\hat g_{\cS}^\top \widehat\bTheta^{1/2}\zeta_i\big)
-
\max_{\cS\notin\bcS_0}\big(\hat R_{T,\cS}+b\,\hat g_{\cS}^\top \widehat\bTheta^{1/2}\zeta_i\big)
\Big]
\ge -\kappa,
$$
we will have 
$$
b \in \Bigg\{a\ge 0:\ 
\max_{1\le i\le m}
\Big[
\max_{\cS\in\bcS_0}\big(\hat R_{T,\cS}+a\,\hat g_{\cS}^\top \widehat\bTheta^{1/2}\zeta_i\big)
-
\max_{\cS\notin\bcS_0}\big(\hat R_{T,\cS}+a\,\hat g_{\cS}^\top \widehat\bTheta^{1/2}\zeta_i\big)
\Big]
\ge -\kappa
\Bigg\}
$$
and $\cU_T \le b \le \cU_T - \delta$, which reaches a contradiction. Therefore, we have
$$
\sup_{b \in [0,\cU_T - \delta]} \max_{1\le i\le m}
\Big[
\max_{\cS\in\bcS_0}\big(\hat R_{T,\cS}+b\,\hat g_{\cS}^\top \widehat\bTheta^{1/2}\zeta_i\big)
-
\max_{\cS\notin\bcS_0}\big(\hat R_{T,\cS}+b\,\hat g_{\cS}^\top \widehat\bTheta^{1/2}\zeta_i\big)
\Big]
< -\kappa,
$$
which yields $\cU_T - \delta \le \tilde\cU_T$. Taking $\delta \rightarrow 0$ gives $\tilde\cU_T = \cU_T$. Accordingly, it suffices to establish the theorem with $\tilde\cU_T$ in place of $\cU_T$.

On the event \(\cE\) in \eqref{eq: cE r bd T}, with probability at least
\(1-O(T^{-1})\) over \(\cH_{T-1}\mid\hat\bbeta_0\), we have
\begin{align}
&\max_{\cS\in\bcS^K}
\left\|
\hat\bTheta^{1/2}\hat g_{\cS}
-
\frac{1}{\sqrt{T-1}}\,
(\bSigma_{\cI_*}^{*})^{-1/2}
g_{\cS}
\right\|_2 \notag\\
&\le
\max_{\cS\in\bcS^K}
\left\|
\hat g_{\cS}-g_{\cS}
\right\|_2
\left\|
[\nabla_{\bbeta}^2\ell_{T-1}(\hat\bbeta_{T-1})]_{\cI_*,\cI_*}^{-1/2}
\right\|_2 \notag\\
&\qquad+
\max_{\cS\in\bcS^K}
\|g_{\cS}\|_2
\left\|
[\nabla_{\bbeta}^2\ell_{T-1}(\hat\bbeta_{T-1})]_{\cI_*,\cI_*}^{-1/2}
-
[(T-1)\bSigma_{\cI_*}^*]^{-1/2}
\right\|_2 .
\label{eq: hes grad decomp revised}
\end{align}
The first inequality follows from the triangle inequality and exact effective support recovery by Corollary~\ref{col: unique solution decomp}. We now bound the two terms separately.

For the first term in \eqref{eq: hes grad decomp revised}, by \eqref{eq: hes T bd} and \eqref{eq: l1 rate improved},
\[
\begin{aligned}
\max_{\cS\in\bcS^K}
\|\hat g_{\cS}-g_{\cS}\|_2
&\le
\sqrt{s_*}
\max_{\cS\in\bcS^K}
\|\hat g_{\cS}-g_{\cS}\|_\infty\\
&\le
\sqrt{s_*}
\max_{\cS\in\bcS^K}
\sup_{\bbeta\in\R^p}
\|\nabla_{\bbeta}^2R(\cS\mid\bbeta,\bv_T,\br_T)\|_{\max}
\|\hat\bbeta_{T-1}-\bbeta^*\|_1\\
&\lesssim \frac{\bar\mu\nu^3s_*^{3/2}\sqrt{\log(Tp)}}{\underline\lambda\sqrt T}.
\end{aligned}
\]
By \eqref{eq: min eigen sig support},
\[
\left\|
[\nabla_{\bbeta}^2\ell_{T-1}(\hat\bbeta_{T-1})]_{\cI_*,\cI_*}^{-1/2}
\right\|_2
\lesssim
\frac{1}{\sqrt{T\underline\lambda}}.
\]
Therefore,
\begin{equation}\label{eq: hes grad first term revised}
\max_{\cS\in\bcS^K}
\|\hat g_{\cS}-g_{\cS}\|_2
\left\|
[\nabla_{\bbeta}^2\ell_{T-1}(\hat\bbeta_{T-1})]_{\cI_*,\cI_*}^{-1/2}
\right\|_2
\lesssim
\frac{\bar\mu\nu^3s_*^{3/2}\sqrt{\log(Tp)}}
{T\underline\lambda^{3/2}} .
\end{equation}

We next bound the second term in \eqref{eq: hes grad decomp revised}. Write
\[
    H_T:=
    [\nabla_{\bbeta}^2\ell_{T-1}(\hat\bbeta_{T-1})]_{\cI_*,\cI_*},
    \qquad
    H_T^0:=(T-1)\bSigma_{\cI_*}^* .
\]
By the refined Hessian bound used in Corollary~\ref{col: error decomp debias lasso},
\begin{align}
\|H_T-H_T^0\|_2
&\lesssim
s_*
\left[
    \nu^2\{\log(Tp)+\sqrt{T\log(Tp)}\}
    +
    \frac{C_n\nu^3s\sqrt{T\log(Tp)}}{\underline\lambda}
    +
    \frac{\nu^4s_*\sqrt{T\log(Tp)}}{\underline\lambda}
\right] \notag\\
&\lesssim
s_*\sqrt{T\log(Tp)}
\left[
    \nu^2
    +
    \frac{C_n\nu^3s+\nu^4s_*}{\underline\lambda}
\right],
\label{eq: refined hessian perturb revised}
\end{align}
where the \(\nu^2\log(Tp)\) term is absorbed using
\(C_n\gtrsim\nu\), \(\nu^2s\gtrsim\underline\lambda\), and
\(T\ge C\nu^8s^4\log(Tp)/\underline\lambda^4\) with \(C>0\) sufficiently large. By
\eqref{eq: min eigen sig support}, both \(H_T\) and \(H_T^0\) have minimal eigenvalues bounded below by a constant multiple of \(T\underline\lambda\).

We use the matrix square-root perturbation bound: if
\(\Ab\succeq\mu_1\Ib\) and \(\Bb\succeq\mu_2\Ib\), then
\[
    \|\Ab^{1/2}-\Bb^{1/2}\|_2
    \le
    \frac{\|\Ab-\Bb\|_2}{\sqrt{\mu_1}+\sqrt{\mu_2}},
\]
see \cite{schmitt1992perturbation}. Applying this bound and the identity
\[
    H_T^{-1/2}-(H_T^0)^{-1/2}
    =
    H_T^{-1/2}\big\{(H_T^0)^{1/2}-H_T^{1/2}\big\}(H_T^0)^{-1/2},
\]
we obtain
\begin{align}
\left\|
H_T^{-1/2}-(H_T^0)^{-1/2}
\right\|_2
&\le
\|H_T^{-1/2}\|_2
\|(H_T^0)^{-1/2}\|_2
\left\|H_T^{1/2}-(H_T^0)^{1/2}\right\|_2 \notag\\
&\lesssim
\frac{1}{T\underline\lambda}
\cdot
\frac{\|H_T-H_T^0\|_2}{\sqrt{T\underline\lambda}} \notag\\
&\lesssim
\frac{s_*\sqrt{\log(Tp)}}{T\underline\lambda^{3/2}}
\left[
    \nu^2
    +
    \frac{C_n\nu^3s+\nu^4s_*}{\underline\lambda}
\right].
\label{eq: inv sqrt perturb revised}
\end{align}
Since \eqref{eq: grad T bd} gives
\[
    \max_{\cS\in\bcS^K}\|g_{\cS}\|_2
    \le
    4\nu\bar\mu\sqrt{s_*},
\]
combining \eqref{eq: inv sqrt perturb revised} yields
\begin{align}
&\max_{\cS\in\bcS^K}
\|g_{\cS}\|_2
\left\|
H_T^{-1/2}-(H_T^0)^{-1/2}
\right\|_2 \notag\\
&\quad\lesssim
\frac{\bar\mu\nu^3s_*^{3/2}\sqrt{\log(Tp)}}{T\underline\lambda^{3/2}}
\left\{
    1+\frac{C_n\nu s+\nu^2s_*}{\underline\lambda}
\right\}.
\label{eq: hes grad second term revised}
\end{align}
Combining \eqref{eq: hes grad first term revised} and
\eqref{eq: hes grad second term revised}, we obtain
\begin{equation}\label{eq: hes grad bd}
\begin{aligned}
&\max_{\cS\in\bcS^K}
\left\|
\hat\bTheta^{1/2}\hat g_{\cS}
-
\frac{1}{\sqrt{T-1}}\,
(\bSigma_{\cI_*}^{*})^{-1/2}
g_{\cS}
\right\|_2\\
&\quad\lesssim
\frac{\bar\mu\nu^3s_*^{3/2}\sqrt{\log(Tp)}}{T\underline\lambda^{3/2}}
\left\{
    1+\frac{C_n\nu s+\nu^2s_*}{\underline\lambda}
\right\}.
\end{aligned}
\end{equation}

The localized version improves the second term by using
\[
    \max_{\cS\in\bar\bcS_0(\bv_T,\br_T)\cup\bar\bcS_1(\bv_T,\br_T)}
    \|g_{\cS}\|_2
    =
    \sigma_{\bv_T,\br_T}
\]
instead of \(4\nu\bar\mu\sqrt{s_*}\). Therefore,
\begin{equation}\label{eq: hes grad bd subset}
\begin{aligned}
&\max_{\cS\in\bar\bcS_0(\bv_T,\br_T)\cup\bar\bcS_1(\bv_T,\br_T)}
\left\|
\hat\bTheta^{1/2}\hat g_{\cS}
-
\frac{1}{\sqrt{T-1}}\,
(\bSigma_{\cI_*}^{*})^{-1/2}
g_{\cS}
\right\|_2\\
&\quad\lesssim
\frac{\bar\mu\nu^3s_*^{3/2}\sqrt{\log(Tp)}}{T\underline\lambda^{3/2}}
+
\frac{\sigma_{\bv_T,\br_T}\nu^2s_*\sqrt{\log(Tp)}}{T\underline\lambda^{3/2}}
\left\{
    1+\frac{C_n\nu s+\nu^2s_*}{\underline\lambda}
\right\}.
\end{aligned}
\end{equation}
Assume
\begin{equation}\label{eq: cond coeff dist}
     \frac{\nu^2s_*\sqrt{\log(Tp)}}{\sqrt T\,\underline\lambda}
    \left\{
        1+\frac{C_n\nu s+\nu^2s_*}{\underline\lambda}
    \right\}
    +
    \frac{\bar\mu\nu^3s_*^{3/2}\sqrt{\log(Tp)}}
    {\sigma_{\bv_T,\br_T}\sqrt T\,\underline\lambda}
    \le
    c,
\end{equation}
for small enough $c > 0$. Together with
\(\kappa\asymp\sigma_{\bv_T,\br_T}\sqrt{s_*/(T\underline\lambda)}\),
\eqref{eq: hes grad bd subset} implies
\begin{equation}\label{eq: kappa cond 1}
\sqrt{s_*}
\max_{\cS\in\bar\bcS_0(\bv_T,\br_T)\cup\bar\bcS_1(\bv_T,\br_T)}
\left\|
\hat\bTheta^{1/2}\hat g_{\cS}
-
\frac{1}{\sqrt{T-1}}\,
(\bSigma_{\cI_*}^{*})^{-1/2}
g_{\cS}
\right\|_2
\le
c_1\kappa
\end{equation}
for a sufficiently small constant \(c_1>0\). Moreover, \eqref{eq: hes grad bd} gives
\begin{equation}\label{eq: theta g global bd revised}
\max_{\cS\in\bcS^K}
\|\widehat\bTheta^{1/2}\hat g_{\cS}\|_2
\lesssim
\nu\bar\mu\sqrt{\frac{s_*}{T\underline\lambda}},
\end{equation}
because \eqref{eq: cond coeff dist} implies that the perturbation term in \eqref{eq: hes grad bd} is dominated by the leading scale
\(\nu\bar\mu\sqrt{s_*/(T\underline\lambda)}\).

Applying Lemma~\ref{lm: MTG coupling} with
\begin{equation}\label{eq: eta value}
    \eta
    =
    \sqrt{\log(Tp)}
    \left\{
    \left(
        \frac{s_*^{5/2}\nu^3}
        {T^{1/2}\underline\lambda^{3/2}}
    \right)^{1/3}
    \vee
    \left(
        \frac{C_n\nu^3s\,s_*^2\sqrt{\log(Tp)}}
        {\underline\lambda^2\sqrt T}
    \right)^{1/2}
    \right\},
\end{equation}
there exists a random vector
\(\bxi\mid\hat\bbeta_0\sim\cN(0,\Ib_{s_*})\) such that
\begin{equation}\label{eq: S xi dis}
\PP\big(\|\boldsymbol S-\bxi\|_2>\eta\mid\hat\bbeta_0\big)
=o(1).
\end{equation}
Furthermore, assume that
\begin{equation}\label{eq: cond eta}
     \sqrt{\log(Tp)}
    \left\{
    \left(
        \frac{s_*^{5/2}\nu^3}
        {T^{1/2}\underline\lambda^{3/2}}
    \right)^{1/3}
    \vee
    \left(
        \frac{C_n\nu^3s\,s_*^2\sqrt{\log(Tp)}}
        {\underline\lambda^2\sqrt T}
    \right)^{1/2}
    \right\}
    \le
    c\sqrt{s_*}.
\end{equation}
Then, for \(\eta\) defined in \eqref{eq: eta value},
\begin{equation}\label{eq: eta kappa cond revised}
    \frac{\sigma_{\bv_T,\br_T}}{\sqrt{T\underline\lambda}}\,\eta
    \le
    c_2\kappa
\end{equation}
for a sufficiently small constant \(c_2>0\), after choosing \(c>0\) sufficiently small.

In addition, Corollary~\ref{col: unique solution decomp} implies that
\(\hat s=s_*\) with probability at least \(1-O(T^{-1})\). Consequently, for any fixed
\(\alpha\in(0,1)\), if
\[
m
\ge
\log(2/\alpha)\sqrt{\frac{8\hat s}{\pi}}
\left(\frac{\pi}{2\epsilon}\right)^{\hat s-1},
\]
then, on the event \(\hat s=s_*\),
\[
\delta_m
:=
\exp\left\{
    -m\sqrt{\frac{\pi}{8s_*}}
    \left(\frac{2\epsilon}{\pi}\right)^{s_*-1}
\right\}
\le
\alpha/2.
\]

Hence, for any \(\delta\in(0,1-\alpha)\), given \((\bv_T,\br_T)\) such that \(\cE\) holds, and for any \(\bbeta^*\in\cM_0(\bv_T,\br_T)\), we have
\begin{equation}\label{eq: p value ini bd}
\begin{aligned}
&\PP_{\bbeta^*}(p_m\le\alpha\mid\bv_T,\br_T,\hat\bbeta_0)\\
&\le
\PP_{\bbeta^*}
\left(
\tilde\cU_T^2
\ge
F^{-1}_{\chi_{s_*}^2}(1-\alpha+\delta_m)
\mid\bv_T,\br_T,\hat\bbeta_0
\right)
+O(T^{-1})\\
&\le
\PP_{\bbeta^*}\Bigg\{
\sup_{b\in\left[0,\sqrt{F^{-1}_{\chi_{s_*}^2}(1-\alpha+\delta_m-\delta)}\right]}
\max_{1\le i\le m}
\Big[
\max_{\cS\in\bcS_0}
\Big(\hat R_{T,\cS}+b\,\hat g_{\cS}^{\top}\widehat\bTheta^{1/2}\zeta_i\Big)\\
&\hspace{8em}
-
\max_{\cS\notin\bcS_0}
\Big(\hat R_{T,\cS}+b\,\hat g_{\cS}^{\top}\widehat\bTheta^{1/2}\zeta_i\Big)
\Big]
<-\kappa
\mid\bv_T,\br_T,\hat\bbeta_0
\Bigg\}
+O(T^{-1}).
\end{aligned}
\end{equation}

Now suppose that
\[
\|\bxi\|_2^2
\le
F^{-1}_{\chi_{s_*}^2}(1-\alpha+\delta_m-\delta)
\le
F^{-1}_{\chi_{s_*}^2}(1-\alpha/2-\delta)
=
O(s_*),
\]
that \(\|\bxi/\|\bxi\|_2-\zeta_{i^*}\|_2\le\epsilon\) for some \(i^*\in[m]\), and that
\eqref{eq: hes grad bd} holds. We have
\begin{align*}
&\sup_{b\in\left[0,\sqrt{F^{-1}_{\chi_{s_*}^2}(1-\alpha+\delta_m-\delta)}\right]}
\max_{1\le i\le m}
\Big[
\max_{\cS\in\bcS_0}
\Big(\hat R_{T,\cS}+b\,\hat g_{\cS}^{\top}\widehat\bTheta^{1/2}\zeta_i\Big)
-
\max_{\cS\notin\bcS_0}
\Big(\hat R_{T,\cS}+b\,\hat g_{\cS}^{\top}\widehat\bTheta^{1/2}\zeta_i\Big)
\Big]\\
&\quad\ge
\max_{\cS\in\bcS_0}
\Big(\hat R_{T,\cS}+\|\bxi\|_2\,\hat g_{\cS}^{\top}\widehat\bTheta^{1/2}\zeta_{i^*}\Big)
-
\max_{\cS\notin\bcS_0}
\Big(\hat R_{T,\cS}+\|\bxi\|_2\,\hat g_{\cS}^{\top}\widehat\bTheta^{1/2}\zeta_{i^*}\Big).
\end{align*}

On the event \eqref{eq: rev decomp}, using \eqref{eq: theta g global bd revised},
\(\|\bxi\|_2=O(\sqrt{s_*})\), and the bound
\begin{equation}\label{eq: cond beta weak tail}
     \bar\mu
    \left[
    \frac{\nu^4s_*^2\log(Tp)}{T\underline\lambda^2}
    \left\{
        1+\frac{C_n\nu s+\nu^2s_*}{\underline\lambda}
    \right\}
    +
    \frac{\nu^3s_*}{\underline\lambda}
    \|[\boldsymbol\beta^*]_{\cI_{\rm wk}}\|_1
    \right]
    \le
    c\,\sigma_{\bv_T,\br_T}
    \sqrt{\frac{s_*}{T\underline\lambda}} ,
\end{equation}
we obtain
\begin{align*}
&\max_{\cS\in\bcS^K}
\left|
\hat R_{T,\cS}
+
\|\bxi\|_2\,\hat g_{\cS}^{\top}\widehat\bTheta^{1/2}\zeta_{i^*}
-
R^*_{T,\cS}
\right|\\
&\quad\lesssim
\|\bxi\|_2
\max_{\cS\in\bcS^K}
\left(
\|\widehat\bTheta^{1/2}\hat g_{\cS}\|_2
+
\frac{1}{\sqrt{T-1}}\|(\bSigma_{\cI_*}^*)^{-1/2}g_{\cS}\|_2
\right)\\
&\qquad+
\max_{\cS\in\bcS^K}
\Bigg|
\hat R_{T,\cS}
-
R^*_{T,\cS}
+
\frac{1}{\sqrt{T-1}}\,
g_{\cS}^{\top}
(\bSigma_{\cI_*}^{*})^{-1/2}\boldsymbol S
\Bigg|\\
&\quad\lesssim
\frac{\nu\bar\mu s_*}{\sqrt{T\underline\lambda}}
+
\bar\mu
\left[
\frac{\nu^4s_*^2\log(Tp)}{T\underline\lambda^2}
\left\{
    1+\frac{C_n\nu s+\nu^2s_*}{\underline\lambda}
\right\}
+
\frac{\nu^3s_*}{\underline\lambda}
\|[\boldsymbol\beta^*]_{\cI_{\rm wk}}\|_1
\right]\\
&\quad\le
9\nu s_*\bar\mu\sqrt{\frac{\log T}{T\underline\lambda}},
\end{align*}
where the last inequality follows from \(\log T\ge1\), \(\epsilon\in(0,1)\),
\(\sigma_{\bv_T,\br_T}\le 4\nu\bar\mu\sqrt{s_*}\), and \eqref{eq: cond beta weak tail}. Therefore,
\begin{align*}
&\max_{\cS\in\bcS_0\setminus\bar\bcS_0(\bv_T,\br_T)}
\left\{
\hat R_{T,\cS}
+
\|\bxi\|_2\hat g_{\cS}^{\top}\widehat\bTheta^{1/2}\zeta_{i^*}
\right\}\\
&\quad\le
\max_{\cS\in\bcS_0\setminus\bar\bcS_0(\bv_T,\br_T)}
R^*_{T,\cS}
+
9\nu s_*\bar\mu\sqrt{\frac{\log T}{T\underline\lambda}}\\
&\quad\le
\max_{\cS\in\bar\bcS_0(\bv_T,\br_T)}
R^*_{T,\cS}
-
9\nu s_*\bar\mu\sqrt{\frac{\log T}{T\underline\lambda}}\\
&\quad\le
\max_{\cS\in\bar\bcS_0(\bv_T,\br_T)}
\left\{
\hat R_{T,\cS}
+
\|\bxi\|_2\hat g_{\cS}^{\top}\widehat\bTheta^{1/2}\zeta_{i^*}
\right\}.
\end{align*}
This gives
\begin{equation}\label{eq: max in bar S_0}
\max_{\cS\in\bcS_0}
\left\{
\hat R_{T,\cS}
+
\|\bxi\|_2\hat g_{\cS}^{\top}\widehat\bTheta^{1/2}\zeta_{i^*}
\right\}
=
\max_{\cS\in\bar\bcS_0(\bv_T,\br_T)}
\left\{
\hat R_{T,\cS}
+
\|\bxi\|_2\hat g_{\cS}^{\top}\widehat\bTheta^{1/2}\zeta_{i^*}
\right\}.
\end{equation}
Similarly,
\begin{equation}\label{eq: max in bar S_1}
\max_{\cS\notin\bcS_0}
\left\{
\hat R_{T,\cS}
+
\|\bxi\|_2\hat g_{\cS}^{\top}\widehat\bTheta^{1/2}\zeta_{i^*}
\right\}
=
\max_{\cS\in\bar\bcS_1(\bv_T,\br_T)}
\left\{
\hat R_{T,\cS}
+
\|\bxi\|_2\hat g_{\cS}^{\top}\widehat\bTheta^{1/2}\zeta_{i^*}
\right\}.
\end{equation}

When events \eqref{eq: max in bar S_0} and \eqref{eq: max in bar S_1} both hold, using \eqref{eq: kappa cond 1}, we further have
\begin{align*}
&\max_{\cS\in\bcS_0}
\Big(\hat R_{T,\cS}+\|\bxi\|_2\hat g_{\cS}^{\top}\widehat\bTheta^{1/2}\zeta_{i^*}\Big)
-
\max_{\cS\notin\bcS_0}
\Big(\hat R_{T,\cS}+\|\bxi\|_2\hat g_{\cS}^{\top}\widehat\bTheta^{1/2}\zeta_{i^*}\Big)\\
&=
\max_{\cS\in\bar\bcS_0(\bv_T,\br_T)}
\Big(\hat R_{T,\cS}+\|\bxi\|_2\hat g_{\cS}^{\top}\widehat\bTheta^{1/2}\zeta_{i^*}\Big)
-
\max_{\cS\in\bar\bcS_1(\bv_T,\br_T)}
\Big(\hat R_{T,\cS}+\|\bxi\|_2\hat g_{\cS}^{\top}\widehat\bTheta^{1/2}\zeta_{i^*}\Big)\\
&\ge
\max_{\cS\in\bar\bcS_0(\bv_T,\br_T)}
\Big(\hat R_{T,\cS}+\hat g_{\cS}^{\top}\widehat\bTheta^{1/2}\bxi\Big)
-
\max_{\cS\in\bar\bcS_1(\bv_T,\br_T)}
\Big(\hat R_{T,\cS}+\hat g_{\cS}^{\top}\widehat\bTheta^{1/2}\bxi\Big)\\
&\quad-
O\left(
\sqrt{s_*}\epsilon
\max_{\cS\in\bar\bcS_0(\bv_T,\br_T)\cup\bar\bcS_1(\bv_T,\br_T)}
\|\hat\bTheta^{1/2}\hat g_{\cS}\|_2
\right)\\
&\ge
\max_{\cS\in\bar\bcS_0(\bv_T,\br_T)}
\Big(\hat R_{T,\cS}
+\frac{1}{\sqrt{T-1}}g_{\cS}^{\top}(\bSigma_{\cI_*}^{*})^{-1/2}\bxi\Big)\\
&\quad-
\max_{\cS\in\bar\bcS_1(\bv_T,\br_T)}
\Big(\hat R_{T,\cS}
+\frac{1}{\sqrt{T-1}}g_{\cS}^{\top}(\bSigma_{\cI_*}^{*})^{-1/2}\bxi\Big)
-\frac{\kappa}{2}.
\end{align*}
Furthermore, by Lemmas~\ref{lm: MTG coupling} and \ref{lm: rev decomp}, on the events \eqref{eq: rev decomp} and \(\|\boldsymbol S-\bxi\|_2\le\eta\), using \eqref{eq: eta kappa cond revised} and \eqref{eq: cond beta weak tail},
\begin{align*}
&\max_{\cS\in\bar\bcS_0(\bv_T,\br_T)}
\Big(\hat R_{T,\cS}
+\frac{1}{\sqrt{T-1}}g_{\cS}^{\top}(\bSigma_{\cI_*}^{*})^{-1/2}\bxi\Big)\\
&\quad-
\max_{\cS\in\bar\bcS_1(\bv_T,\br_T)}
\Big(\hat R_{T,\cS}
+\frac{1}{\sqrt{T-1}}g_{\cS}^{\top}(\bSigma_{\cI_*}^{*})^{-1/2}\bxi\Big)
-\frac{\kappa}{2}\\
&\ge
\max_{\cS\in\bar\bcS_0(\bv_T,\br_T)}
\left(
R_{T,\cS}^*
-
\frac{1}{\sqrt{T-1}}g_{\cS}^{\top}(\bSigma_{\cI_*}^{*})^{-1/2}(\bxi-\boldsymbol S)
\right)\\
&\quad-
\max_{\cS\in\bar\bcS_1(\bv_T,\br_T)}
\left(
R_{T,\cS}^*
-
\frac{1}{\sqrt{T-1}}g_{\cS}^{\top}(\bSigma_{\cI_*}^{*})^{-1/2}(\bxi-\boldsymbol S)
\right)\\
&\quad-
\frac{\kappa}{2}
-
2\max_{\cS\in\bcS^K}
\Bigg|
\hat R_{T,\cS}
-
R^*_{T,\cS}
+
\frac{1}{\sqrt{T-1}}g_{\cS}^{\top}
(\bSigma_{\cI_*}^{*})^{-1/2}\boldsymbol S
\Bigg|\\
&\ge
\max_{\cS\in\bar\bcS_0(\bv_T,\br_T)}R^*_{T,\cS}
-
\max_{\cS\in\bar\bcS_1(\bv_T,\br_T)}R^*_{T,\cS}
-
\kappa.
\end{align*}
Since \(\bbeta^*\in\cM_0(\bv_T,\br_T)\), the last display is at least \(-\kappa\). Substituting the preceding bounds into \eqref{eq: p value ini bd}, we obtain
\begin{align*}
\PP_{\bbeta^*}(p_m\le\alpha\mid\bv_T,\br_T,\hat\bbeta_0)
&\le
O(T^{-1})
+
\PP\left(
\|\bxi\|_2^2>
F^{-1}_{\chi_{s_*}^2}(1-\alpha+\delta_m-\delta)
\mid\hat\bbeta_0
\right)\\
&\quad+
\PP\left(
\|\bxi/\|\bxi\|_2-\zeta_i\|_2>\epsilon,\ \forall i\in[m]
\mid\hat\bbeta_0
\right)\\
&\quad+
\PP(\|\boldsymbol S-\bxi\|_2>\eta\mid\hat\bbeta_0)\\
&\quad+
\PP\{\text{event \eqref{eq: rev decomp} fails}\mid\bv_T,\br_T,\hat\bbeta_0\}\\
&\quad+
\PP\{\text{event \eqref{eq: hes grad bd} fails}\mid\bv_T,\br_T,\hat\bbeta_0\}\\
&\quad+
\PP\{\text{event \eqref{eq: max in bar S_0} fails}\mid\bv_T,\br_T,\hat\bbeta_0\}\\
&\quad+
\PP\{\text{event \eqref{eq: max in bar S_1} fails}\mid\bv_T,\br_T,\hat\bbeta_0\}\\
&\le
\alpha-\delta_m+\delta+\delta_m+o(1)+O(T^{-1})\\
&=
\alpha+o(1)+\delta,
\end{align*}
where we apply Lemma~\ref{lm: sphere coverage} and the event \(\hat s=s_*\) to obtain
\[
\PP\big(\|\bxi/\|\bxi\|_2-\zeta_i\|_2>\epsilon,\ \forall i\in[m]\mid\hat\bbeta_0\big)
\le
\exp\left\{
    -m\sqrt{\frac{\pi}{8s_*}}
    \left(\frac{2\epsilon}{\pi}\right)^{s_*-1}
\right\}
=
\delta_m .
\]
Letting \(\delta\downarrow0\) gives the desired claim.

Finally, under the condition that 
$$
T
\ge
C\frac{C_n^2 \nu^6 s^2 s_*^2 \log^2 (Tp)}{\underline\lambda^4}\left[
\log (Tp)
\vee
\frac{\bar\mu^2 \nu^{4}s_*}
{\sigma_{\bv_T,\br_T}^2\underline\lambda}
\right], 
$$
and 
$$
 \|[\boldsymbol\beta^*]_{\cI_{\rm wk}}\|_1
    \le
    c\,
    \frac{
        \sigma_{\bv_T,\br_T}\sqrt{\underline\lambda}
    }{
        \bar\mu\nu^3\sqrt{s_*T}
    },
$$
for sufficiently large $C >0$ and sufficiently small $c > 0$, along with the fact that $\underline\lambda \le \nu^2$, we have that the conditions \eqref{eq: cond coeff dist}, \eqref{eq: cond eta} and \eqref{eq: cond beta weak tail} hold. This completes the proof.
\subsection{Proof of Theorem~\ref{thm: valid powerful test} (Validity and power)}
\label{sec: proof thm valid powerful test}

The Type~I error bound in \eqref{eq: type I asymp} follows directly from
Theorem~\ref{thm: valid p}. It remains to prove \eqref{eq: power}.

Under the specification
\[
m
\ge
\log\left(\frac{2}{\alpha}\right)
\sqrt{\frac{8\hat s}{\pi}}
\left(\frac{\pi}{2\eps}\right)^{\hat s-1},
\]
we have \(\delta_m\le\alpha/2\). Besides, by
Corollary~\ref{col: unique solution decomp}, we have
\(\hat s=s_*\) with probability at least \(1-O(T^{-1})\). By
Assumption~\ref{asp: assortment const}, the event \(\cE\) in
\eqref{eq: cE r bd T} holds with probability at least \(1-O(T^{-1})\) over
the randomness in \((\bv_T,\br_T)\).

On the event \(\cE\), for any fixed \(\alpha\in(0,1)\) and any
\(\bbeta^*\in\cM_1(\bv_T,\br_T)\), using the equivalent representation
\eqref{eq: inf lambda 1}, we have
\begin{align*}
\PP_{\bbeta^*}(p_m>\alpha\mid\bv_T,\br_T,\hat\bbeta_0)
&\le
\PP_{\bbeta^*}\left(
F^{-1}_{\chi^2_{s_*}}(1-\alpha+\delta_m)>\tilde\cU_T^2
\,\middle|\,
\bv_T,\br_T,\hat\bbeta_0
\right)
+
O(T^{-1})\\
&\le
\PP_{\bbeta^*}\left(
F^{-1}_{\chi^2_{s_*}}(1-\alpha/2)>\tilde\cU_T^2
\,\middle|\,
\bv_T,\br_T,\hat\bbeta_0
\right)
+
O(T^{-1})\\
&\le
\PP_{\bbeta^*}\Bigg(
\sup_{b\in[0,\tilde a]}
\max_{1\le i\le m}
\Big[
\max_{\cS\in\bcS_0}
\big(\hat R_{T,\cS}+b\,\hat g_{\cS}^{\top}\widehat\bTheta^{1/2}\zeta_i\big)\\
&\hspace{8em}
-
\max_{\cS\notin\bcS_0}
\big(\hat R_{T,\cS}+b\,\hat g_{\cS}^{\top}\widehat\bTheta^{1/2}\zeta_i\big)
\Big]
\ge -\kappa
\ \Big|\ \bv_T,\br_T,\hat\bbeta_0
\Bigg)\\
& \quad +
O(T^{-1}),
\end{align*}
where
\[
\tilde a
:=
\sqrt{F^{-1}_{\chi^2_{s_*}}(1-\alpha/2)}
=
O(\sqrt{s_*}) .
\]

On the event \(\cE\), by \eqref{eq: theta g global bd revised}, with
probability at least \(1-O(T^{-1})\) over \(\cH_{T-1}\mid\hat\bbeta_0\), we
have
\[
\max_{\cS\in\bcS^K}
\|\widehat\bTheta^{1/2}\hat g_{\cS}\|_2
\lesssim
\nu\bar\mu\sqrt{\frac{s_*}{T\underline\lambda}} .
\]
Consequently,
\begin{align*}
\sup_{b\in[0,\tilde a]}
\max_{i\in[m]}\max_{\cS\in\bcS^K}
\left|
b\,\hat g_{\cS}^{\top}\widehat\bTheta^{1/2}\zeta_i
\right|
&\le
\tilde a
\max_{\cS\in\bcS^K}
\|\widehat\bTheta^{1/2}\hat g_{\cS}\|_2 \lesssim
\nu\bar\mu
\frac{s_*}{\sqrt{T\underline\lambda}} \le
\nu s_*\bar\mu\sqrt{\frac{\log T}{T\underline\lambda}}.
\end{align*}

Taking \(\eta\) to be the value in \eqref{eq: eta value}, when
\eqref{eq: MTG coupling} and \eqref{eq: rev decomp} hold, there exists
\(\bxi\mid\hat\bbeta_0\sim\cN(0,\Ib_{s_*})\) such that
\[
    \|\boldsymbol S-\bxi\|_2\le\eta
\]
with conditional probability \(1-o(1)\). Moreover,
\begin{equation}\label{eq: bxi bd}
\begin{aligned}
\PP\big(\|\bxi\|_2\ge 2\sqrt{s_*\log T}\mid\hat\bbeta_0\big)
&\le
\PP\big(\|\bxi\|_{\infty}\ge 2\sqrt{\log T}\mid\hat\bbeta_0\big)\\
&\le
s_*\,\PP\big(|\xi_1|\ge 2\sqrt{\log T}\mid\hat\bbeta_0\big)
\le
\frac{2s_*}{T^2}
\le
\frac{2}{T}.
\end{aligned}
\end{equation}
On the event \(\|\bxi\|_2\le 2\sqrt{s_*\log T}\), we have
\begin{align*}
&\max_{\cS\in\bcS^K}
\frac{1}{\sqrt{T-1}}\,
\left|
g_{\cS}^{\top}
(\bSigma_{\cI_*}^{*})^{-1/2}\bxi
\right|\\
&\quad\le
\max_{\cS\in\bcS^K}
\frac{\|g_{\cS}\|_2\|\bxi\|_2}
{\sqrt{(T-1)\underline\lambda}}
\le
\frac{4\sqrt{s_*}\nu\bar\mu\cdot2\sqrt{s_*\log T}}
{\sqrt{(T-1)\underline\lambda}}
\le
8\nu s_*\bar\mu
\sqrt{\frac{\log T}{(T-1)\underline\lambda}} .
\end{align*}
Together with \eqref{eq: rev decomp} and \(\|\boldsymbol S-\bxi\|_2\le\eta\),
and using \eqref{eq: eta kappa cond revised} and \eqref{eq: cond beta weak tail} implied by the conditions on $T$ and $\eta_T$, this implies
\[
\max_{b\in[0,\tilde a]}
\max_{i\in[m]}
\max_{\cS\in\bcS^K}
\left|
\hat R_{T,\cS}
+
b\,\hat g_{\cS}^{\top}\widehat\bTheta^{1/2}\zeta_i
-
R_{T,\cS}^*
\right|
\le
9\nu s_*\bar\mu
\sqrt{\frac{\log T}{T\underline\lambda}}
\]
with probability at least \(1-O(T^{-1})-o(1)\) over
\(\cH_{T-1}\mid\hat\bbeta_0\).

Therefore, with probability at least \(1-O(T^{-1})-o(1)\) over
\(\cH_{T-1}\mid\hat\bbeta_0\), for all \(i\in[m]\) and all
\(b\in[0,\tilde a]\), we have
\begin{align*}
&\max_{\cS\in\bcS_0\setminus\bar\bcS_0(\bv_T,\br_T)}
\left\{
\hat R_{T,\cS}
+
b\,\hat g_{\cS}^{\top}\widehat\bTheta^{1/2}\zeta_i
\right\}\\
&\quad\le
\max_{\cS\in\bcS_0\setminus\bar\bcS_0(\bv_T,\br_T)}
R_{T,\cS}^*
+
9\nu s_* \bar\mu
\sqrt{\frac{\log T}{T\underline\lambda}}\\
&\quad\le
\max_{\cS\in\bar\bcS_0(\bv_T,\br_T)}
R_{T,\cS}^*
-
18\nu s_* \bar\mu
\sqrt{\frac{\log T}{T\underline\lambda}}
+
9\nu s_*\bar\mu
\sqrt{\frac{\log T}{T\underline\lambda}}\\
&\quad=
\max_{\cS\in\bar\bcS_0(\bv_T,\br_T)}
R_{T,\cS}^*
-
9\nu s_* \bar\mu
\sqrt{\frac{\log T}{T\underline\lambda}}\\
&\quad\le
\max_{\cS\in\bar\bcS_0(\bv_T,\br_T)}
\left\{
\hat R_{T,\cS}
+
b\,\hat g_{\cS}^{\top}\widehat\bTheta^{1/2}\zeta_i
\right\}.
\end{align*}
Hence
\[
\max_{\cS\in\bcS_0}
\left\{
\hat R_{T,\cS}
+
b\,\hat g_{\cS}^{\top}\widehat\bTheta^{1/2}\zeta_i
\right\}
=
\max_{\cS\in\bar\bcS_0(\bv_T,\br_T)}
\left\{
\hat R_{T,\cS}
+
b\,\hat g_{\cS}^{\top}\widehat\bTheta^{1/2}\zeta_i
\right\},
\quad
\forall i\in[m],\ b\in[0,\tilde a].
\]
Similarly,
\[
\max_{\cS\notin\bcS_0}
\left\{
\hat R_{T,\cS}
+
b\,\hat g_{\cS}^{\top}\widehat\bTheta^{1/2}\zeta_i
\right\}
=
\max_{\cS\in\bar\bcS_1(\bv_T,\br_T)}
\left\{
\hat R_{T,\cS}
+
b\,\hat g_{\cS}^{\top}\widehat\bTheta^{1/2}\zeta_i
\right\},
\quad
\forall i\in[m],\ b\in[0,\tilde a].
\]

Thus, on the event \(\cE\),
\begin{align*}
\PP_{\bbeta^*}(p_m>\alpha\mid\bv_T,\br_T,\hat\bbeta_0)
&\le
O(T^{-1})+o(1)\\
&\quad+
\PP_{\bbeta^*}\Bigg(
\sup_{b\in[0,\tilde a]}
\max_{1\le i\le m}
\Big[
\max_{\cS\in\bar\bcS_0(\bv_T,\br_T)}
\big(\hat R_{T,\cS}+b\,\hat g_{\cS}^{\top}\widehat\bTheta^{1/2}\zeta_i\big)\\
&\hspace{6em}
-
\max_{\cS\in\bar\bcS_1(\bv_T,\br_T)}
\big(\hat R_{T,\cS}+b\,\hat g_{\cS}^{\top}\widehat\bTheta^{1/2}\zeta_i\big)
\Big]
\ge -\kappa
\ \Big|\ \bv_T,\br_T,\hat\bbeta_0
\Bigg).
\end{align*}

Next, by \eqref{eq: hes grad bd subset}, \eqref{eq: kappa cond 1}, and
\(\tilde a=O(\sqrt{s_*})\), with probability at least \(1-O(T^{-1})\) over
\(\cH_{T-1}\mid\hat\bbeta_0\),
\begin{align*}
&\sup_{b\in[0,\tilde a]}
\max_{i\in[m]}
\max_{\cS\in\bar\bcS_0(\bv_T,\br_T)\cup\bar\bcS_1(\bv_T,\br_T)}
\left|
b\,\hat g_{\cS}^{\top}\widehat\bTheta^{1/2}\zeta_i
\right|\\
&\quad\lesssim
\sqrt{s_*}
\max_{\cS\in\bar\bcS_0(\bv_T,\br_T)\cup\bar\bcS_1(\bv_T,\br_T)}
\|\widehat\bTheta^{1/2}\hat g_{\cS}\|_2\\
&\quad\lesssim
\sigma_{\bv_T,\br_T}
\sqrt{\frac{s_*}{T\underline\lambda}} .
\end{align*}
Moreover, on the event \eqref{eq: rev decomp} and
\(\|\boldsymbol S-\bxi\|_2\le\eta\),
\begin{align*}
&\max_{\cS\in\bar\bcS_0(\bv_T,\br_T)\cup\bar\bcS_1(\bv_T,\br_T)}
\left|
\hat R_{T,\cS}
-
R_{T,\cS}^*
+
\frac{1}{\sqrt{T-1}}
g_{\cS}^{\top}
(\bSigma_{\cI_*}^{*})^{-1/2}\bxi
\right|\\
&\quad\lesssim
\bar\mu
\left[
\frac{\nu^4s_*^2\log(Tp)}{T\underline\lambda^2}
\left\{
1+\frac{C_n\nu s+\nu^2s_*}{\underline\lambda}
\right\}
+
\frac{\nu^3s_*}{\underline\lambda}\eta_T
\right]
+
\frac{\sigma_{\bv_T,\br_T}}{\sqrt{T\underline\lambda}}\eta\\
&\quad\le
c\sigma_{\bv_T,\br_T}
\sqrt{\frac{s_*}{T\underline\lambda}},
\end{align*}
for a sufficiently small constant $c >0$, where the last inequality follows from the conditions of Theorem~\ref{thm: valid p}.

For any \(M>0\), since \(\bxi\mid\hat\bbeta_0\sim\cN(0,\Ib_{s_*})\),
\[
    \PP(\|\bxi\|_2>M\mid\hat\bbeta_0)
    \le
    \exp\left((\log5)s_*-\frac{M^2}{8}\right).
\]
Therefore, under the signal strength condition in \eqref{eq: signal strength},
\begin{align*}
&\PP_{\bbeta^*}\left(
\max_{\cS\in\bar\bcS_0(\bv_T,\br_T)\cup\bar\bcS_1(\bv_T,\br_T)}
\frac{
g_{\cS}^{\top}
(\bSigma_{\cI_*}^{*})^{-1/2}\bxi
}{\sqrt{T-1}}
\ge
\frac{1}{3}
\left|
\max_{\cS\in\bcS_0}R_{T,\cS}^*
-
\max_{\cS\notin\bcS_0}R_{T,\cS}^*
\right|
\ \Bigg|\ \bv_T,\br_T,\hat\bbeta_0
\right)\\
&\quad\le
\PP_{\bbeta^*}\left(
\frac{\sigma_{\bv_T,\br_T}}{\sqrt{(T-1)\underline\lambda}}
\|\bxi\|_2
\ge
\frac{1}{3}
\left|
\max_{\cS\in\bcS_0}R_{T,\cS}^*
-
\max_{\cS\notin\bcS_0}R_{T,\cS}^*
\right|
\ \Bigg|\ \bv_T,\br_T,\hat\bbeta_0
\right)
=o(1).
\end{align*}
Combining the preceding displays and using
\(\kappa\asymp\sigma_{\bv_T,\br_T}\sqrt{s_*/(T\underline\lambda)}\), we obtain
\[
\PP_{\bbeta^*}(p_m>\alpha\mid\bv_T,\br_T,\hat\bbeta_0)
\le
O(T^{-1})+o(1)
=
o(1).
\]
Therefore \eqref{eq: power} follows.
\section{Proofs of Technical Results} \label{app: proof tech results}
\subsection{Proof of Lemma~\ref{lem: grad hessian rates}}\label{sec: proof lem grad hessian rates}
We first establish \eqref{eq: grad rate} using a standard exponential inequality for a martingale difference sequence. Fix any \(t\in[T]\) and \(k\in[p]\). Then
\[
[\nabla_{\bbeta}\ell_{t-1}(\bbeta^*)]_k
= -\sum_{t'=1}^{t-1}\eb_k^\top\!\left\{\bv_{t',\,i_{t'}}-\EE_{\bbeta^*,\,t',\,\cS_{t'}(\hat\bbeta_{t' - 1})}\big(\bv_{t',\,i_{t'}}\big)\right\}
:= -\sum_{t'=1}^{t-1} y_{t'k},
\]
where \(y_{t'k}:=\eb_k^\top\!\left\{\bv_{t',\,i_{t'}}-\EE_{\bbeta^*,\,t',\,\cS_{t'}(\hat\bbeta_{t' - 1})}\big(\bv_{t',\,i_{t'}}\big)\right\}\) for \(t'\in[t-1]\) and \(y_{0k}=0\).
Let \(\cF_{t'-1}=\sigma(\cH_{t'-1})\). Then \(\cF_0\subseteq \cF_1\subseteq \cdots \subseteq \cF_{t-1}\) is an increasing sequence of \(\sigma\)-fields, and by the tower property,
\[
\EE\big(y_{t'k}\mid \cF_{t'-1}\big)
= \EE\Big(\EE\big(y_{t'k}\mid \bv_{t'},\br_{t'},\cF_{t'-1}\big) \mid \cF_{t'-1}\Big)=0,
\]
so \(\{(y_{t'k},\cF_{t'})\}_{t'=0}^{t-1}\) is a martingale difference sequence. Moreover, under Assumption~\ref{asp: cov},
\[
\max_{k\in[p],\,t'\in[T]}\EE\big(y_{t'k}^2\mid \cF_{t'-1}\big)
= \max_{k,\,t'} \EE\Big(\EE\big(y_{t'k}^2\mid \bv_{t'},\br_{t'},\cF_{t'-1}\big)\Big)
\lesssim \nu^2,
\qquad
\max_{k\in[p],\,t'\in[T]}\!\big|y_{t'k}\big|\lesssim \nu.
\]
Therefore, by Theorem~A of \cite{fan2015expineq}, with probability at least \(1-O\!\big((Tp)^{-2}\big)\),
\[
\big|[\nabla_{\bbeta}\ell_{t-1}(\bbeta^*)]_k\big|
\;\le\; C\,\nu\left\{\log(Tp)+\sqrt{t\log(Tp)}\right\},
\]
for a sufficiently large constant \(C>0\). Taking a union bound over \(k\in[p]\) and \(t\in[T]\), we obtain that, with probability at least \(1-O(T^{-1})\), for all \(t\in[T]\),
\[
\|\nabla_{\bbeta}\ell_{t-1}(\bbeta^*)\|_\infty
\;\le\; C\,\nu\left\{\log(Tp)+\sqrt{t\log(Tp)}\right\},
\]
which proves \eqref{eq: grad rate}.

Next, we prove \eqref{eq: hes rate}. By the triangle inequality, we decompose the left-hand side of \eqref{eq: hes rate} into three terms:
\begin{align*}
    \|\nabla_{\bbeta}^2 \ell_{t-1}(\bbeta) - (t-1) \cdot \bSigma^*\|_{\max} & \le  \underbrace{\|\nabla_{\bbeta}^2 \ell_{t-1}(\bbeta) - \nabla_{\bbeta}^2 \ell_{t-1}(\bbeta^*)\|_{\max}}_{\rm I} \\
    & \quad + \underbrace{\left\|\nabla_{\bbeta}^2 \ell_{t-1}(\bbeta^*) - \sum_{t'=1}^{t-1} \EE ( \bSigma_{t'} (\hat\bbeta_{t' - 1}) \mid \hat\bbeta_{t' - 1} )\right\|_{\max}}_{\rm II}\\
    & \quad + \underbrace{\left\|\sum_{t'=1}^{t-1} \EE ( \bSigma_{t'} (\hat\bbeta_{t' - 1}) \mid \hat\bbeta_{t' - 1} ) - 
    (t-1)\cdot \bSigma^* \right\|_{\max}}_{\rm III}.
\end{align*}
We first bound term I. For any $\cS \in \bcS^K$, $t' \in [t-1]$, $\bbeta \in \cB_1(\bbeta^*, 3\tau)$ and $k, \ell \in [p]$,
\begin{equation}\label{eq: hess liptz}
    \begin{aligned}
   & \left|\EE_{\bbeta, t', \cS}\left[\bv_{t', i_{t'}}\bv_{t', i_{t'}}^{\top}\right]_{k \ell} - \EE_{\bbeta^*, t', \cS}\left[\bv_{t', i_{t'}}\bv_{t', i_{t'}}^{\top}\right]_{k \ell} \right|\\
   & = \left|\left\{\sum_{j \in \cS} \PP_{\tilde\bbeta,\bv_{t'}}(j|\cS)\cdot \bv_{t' j k } \cdot \bv_{t' j \ell} \cdot \left(\bv_{t'j} - \EE_{\tilde\bbeta, t', \cS} (\bv_{t',i_{t'}})\right)\right\}^{\top} (\bbeta - \bbeta^*)\right|\\
   & \le \left\|\sum_{j \in \cS} \PP_{\tilde\bbeta,\bv_{t'}}(j|\cS)\cdot \bv_{t' j k } \cdot \bv_{t' j \ell} \cdot \left(\bv_{t'j} - \EE_{\tilde\bbeta, t', \cS} (\bv_{t',i_{t'}})\right)\right\|_{\infty} \cdot \|\bbeta - \bbeta^*\|_1\\
   & \le 6 \nu^3 \cdot \tau,
\end{aligned}
\end{equation}
where $\tilde\bbeta$ lies on the line segment between $\bbeta$ and $\bbeta^*$. The same argument gives
\begin{align*}
      & \left|\EE_{\bbeta, t', \cS}\left(\bv_{t', i_{t'}, k} \right)\EE_{\bbeta, t', \cS}\left(\bv_{t', i_{t'}, \ell} \right) - \EE_{\bbeta^*, t', \cS}\left(\bv_{t', i_{t'}, k} \right)\EE_{\bbeta^*, t', \cS}\left(\bv_{t', i_{t'}, \ell} \right) \right|  \le 12 \nu^3 \cdot \tau.
\end{align*}
Combining the above arguments, we have that
\begin{equation}\label{eq: term I bd}
    {\rm I} \lesssim \nu^3 \cdot t \tau, \quad \text{for all } t \in [T].
\end{equation}

As for term II, we can rewrite
\begin{align*}
   \nabla_{\bbeta}^2 \ell_{t-1}(\bbeta^*) - \sum_{t'=1}^{t-1} \EE ( \bSigma_{t'} (\hat\bbeta_{t' - 1}) \mid \hat\bbeta_{t' - 1} ) & =  \sum_{t'=1}^{t-1} \left\{ \bSigma_{t'} (\hat\bbeta_{t' - 1})   - \EE ( \bSigma_{t'} (\hat\bbeta_{t' - 1}) \mid \hat\bbeta_{t' - 1} ) \right\}\\
   & = \sum_{t'=1}^{t-1} \left\{ \bSigma_{t'} (\hat\bbeta_{t' - 1})   - \EE ( \bSigma_{t'} (\hat\bbeta_{t' - 1}) \mid \hat\bbeta_{t' - 1} , \cF_{t'-1}) \right\}\\
    & = \sum_{t'=1}^{t-1} \left\{ \bSigma_{t'} (\hat\bbeta_{t' - 1})   - \EE ( \bSigma_{t'} (\hat\bbeta_{t' - 1}) \mid  \cF_{t'-1}) \right\} ,
\end{align*}
where $\left\{ \bSigma_{t'} (\hat\bbeta_{t' - 1})   - \EE ( \bSigma_{t'} (\hat\bbeta_{t' - 1}) \mid \cF_{t'-1} ) \right\}_{t'=1}^{t-1}$ is a martingale difference sequence. Under Assumption~\ref{asp: cov}, we have
\begin{align*}
    &\max_{t' \in [T], k,\ell \in [p]} \left|\left[ \bSigma_{t'} (\hat\bbeta_{t' - 1})   - \EE ( \bSigma_{t'} (\hat\bbeta_{t' - 1}) \mid \cF_{t'-1} )\right]_{k \ell} \right| \lesssim \nu^2,\\
    & \max_{t' \in [T], k,\ell \in [p]} \EE \left\{ \left[ \bSigma_{t'} (\hat\bbeta_{t' - 1})   - \EE ( \bSigma_{t'} (\hat\bbeta_{t' - 1}) \mid \cF_{t'-1} )\right]_{k \ell}^2 \mid \cF_{t'-1}\right\} \lesssim \nu^4,
\end{align*}
and hence applying Theorem~A in \cite{fan2015expineq}, we have that with probability at least $1 - O(T^{-1})$,
\begin{equation}\label{eq: term II bd}
    {\rm II} \le C\,\nu^2 \left\{\log(Tp)+\sqrt{t\log(Tp)}\right\},\quad \text{for all } t \in [T].
\end{equation}
We bound term III by the following claim, whose proof is in Section~\ref{sec: proof claim cont bd}.
\begin{claim}\label{claim: cont bd}
Under Assumption~\ref{asp: assortment const}, for all \(t\in[T]\),
\begin{equation}\label{eq: term III bd}
    \mathrm{III} \;\le\; C_n\cdot\nu^2  t\tau \;+\; O\!\left({t\nu^2 }/{T}\right).
\end{equation}
In particular, the constant \(C_n\) admits the following forms under the respective conditions of Assumption~\ref{asp: assortment const}:
\begin{enumerate}
    \item If \(\bcS^K=\{\cS\subseteq[n]: |\cS|=K\}\) and \(\rho \le 2\), then
    \[
    C_n \;=\; O\!\left(\bar\mu  K^3 \nu \sigma_r^{-1}  \sqrt{\log n}\right).
    \]
    \item If \(\bcS^K \subseteq \{\cS\subseteq[n]: |\cS|=K\}\) and, for any \(\cS,\cS'\in \bcS^K\), \(|\cS\cap \cS'|\le (K/\rho^2-1)\vee 0\), then
    \[
    C_n \;=\; O\!\left(K \bar\mu   \nu \sigma_r^{-1} \sqrt{\log n}(K/\rho^2 \vee 1)\right).
    \]
\end{enumerate}
\end{claim}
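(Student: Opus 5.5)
The plan is to reduce term III to a perturbation bound for the selection-induced Hessian at a single fixed plug-in value. It suffices to show that for every fixed $\bbeta\in\cB_1(\bbeta^*,3\tau)$,
\[
\big\|\EE\{\bSigma_{t'}(\bbeta)\mid\bbeta\}-\bSigma^*\big\|_{\max}\;\lesssim\; C_n\nu^2\|\bbeta-\bbeta^*\|_1+\nu^2/T .
\]
Given this, we plug in $\bbeta=\hat\bbeta_{t'-1}$. Since $\hat\bbeta_{t'-1}$ is $\cF_{t'-1}$-measurable and $(\bv_{t'},\br_{t'})$ is independent of $\cF_{t'-1}$ by Assumption~\ref{asp: abs cont of r}, the conditional expectation is exactly this fixed-$\bbeta$ average. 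Summing over $t'\le t-1$ and using $\|\hat\bbeta_{t'-1}-\bbeta^*\|_1\le3\tau$ gives \eqref{eq: term III bd}.

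The key observation is that in $\bSigma_{t'}(\bbeta)$ the item draw is always under $\bbeta^*$. The argument $\bbeta$ enters only through the offered set $\cS_{t'}(\bbeta)$. Since every entry is bounded by $2\nu^2$ (Assumption~\ref{asp: cov}),
\[
\big\|\EE\{\bSigma_{t'}(\bbeta)-\bSigma_{t'}(\bbeta^*)\}\big\|_{\max}\le 4\nu^2\,\PP\{\cS_{t'}(\bbeta)\neq\cS_{t'}(\bbeta^*)\}.
\]
Write $\delta:=\max_{\cS}|R(\cS\mid\bbeta)-R(\cS\mid\bbeta^*)|$. The argmax can change only if the gap between the largest and second-largest value of $\{R(\cS\mid\bbeta^*,\bv_{t'},\br_{t'})\}_{\cS\in\bcS^K}$ is at most $2\delta$; ties have probability zero, and the deterministic tie-breaking rule does not matter. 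By the gradient bound \eqref{eq: gradient revenue}, on the event $\{\max_j|r_{t'j}|\le2\bar\mu\}$ we have $\delta\le 8\nu\bar\mu\|\bbeta-\bbeta^*\|_1$. The complement of this event has probability $O(T^{-2})$ by the Gaussian maximal inequality and the definition of $\bar\mu$, and so contributes only the $\nu^2/T$ residual.

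It then remains to prove anti-concentration of the top-two gap. Condition on $\bv_{t'}$, which is independent of $\br_{t'}$. Then $X_\cS:=R(\cS\mid\bbeta^*)=\sum_{j\in\cS}w_{\cS j}r_j$ with $w_{\cS j}=\PP_{\bbeta^*}(j\mid\cS)$, so $\{X_\cS\}$ is a Gaussian process indexed by $\bcS^K$ with $\mathrm{Var}(X_\cS-X_{\cS'})=\sigma_r^2\|w_\cS-w_{\cS'}\|_2^2$. The event ``top gap $\le 2\delta$'' is contained in $\bigcup_{j}\{|\max_{\cS\ni j}X_\cS-\max_{\cS\not\ni j}X_\cS|\le2\delta\}$ over items $j\in[n]$, because the top two maximizers differ in at least one item. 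Each such event is a difference-of-maxima event, and I will apply the anti-concentration inequality for differences of Gaussian maxima of \cite{belloni2024anticon}. Its bound is of the form $\delta\cdot(\text{normalized expected max})/(\text{minimal standard deviation of differences})$.

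This is where the two alternatives in Assumption~\ref{asp: assortment const} enter, and this step is the main obstacle. We need a lower bound on $\|w_\cS-w_{\cS'}\|_2$ for competing $\cS,\cS'$, together with a way to avoid a union bound over $|\bcS^K|^2$ pairs.
\begin{itemize}
\item Under (1), $\rho\le2$ and all $K$-subsets are allowed. Each weight lies in $[1/(\rho(K+1)),\rho/(K+1)]$, so any item in the symmetric difference yields a difference of order $1/K$. The normalized max of the process is of order $\sigma_r\sqrt{\log n}$ after grouping by item. Accounting for the $K$ items that could differ gives the $K^3\sqrt{\log n}/\sigma_r$ factor.
\item Under (2), the overlap restriction $|\cS\cap\cS'|\le K/\rho^2-1$ forces the symmetric difference to carry mass of order $(K/\rho^2\vee1)^{-1/2}$, which gives the stated $C_n$.
\end{itemize}
Multiplying by $\nu\bar\mu\|\bbeta-\bbeta^*\|_1$ from $\delta$ and by $\nu^2$ from the matrix bound yields $C_n\nu^2\|\bbeta-\bbeta^*\|_1$.

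If the grouping-by-item reduction fails to avoid exponential factors, my fallback is to apply the anti-concentration of \cite{belloni2024anticon} directly to $\max_\cS X_\cS$ minus the second order statistic. In that case I would control the expected maximum of the process by $\bar\mu$-scale Gaussian maximal bounds.
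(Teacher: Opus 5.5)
Your reduction steps are sound and match the paper's. These are the fixed-$\bbeta$ reduction via independence of $(\bv_{t'},\br_{t'})$ from $\cF_{t'-1}$, the observation that $\bbeta$ enters $\bSigma_{t'}(\bbeta)$ only through the offered set, the revenue-envelope bound $\max_{\cS}|R(\cS\mid\bbeta)-R(\cS\mid\bbeta^*)|\lesssim\nu\bar\mu\|\bbeta-\bbeta^*\|_1$ off a negligible event, and $\EE\max_{\cS}|X_\cS-\EE X_\cS|\le\EE\max_j|r_{t'j}-\mu_{rj}|\lesssim\sigma_r\sqrt{\log n}$.

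The gap is in the anti-concentration step. Your containment of $\{\text{top-two gap}\le 2\delta\}$ in $\bigcup_{j\in[n]}\{|\max_{\cS\ni j}X_\cS-\max_{\cS\not\ni j}X_\cS|\le 2\delta\}$ is correct. However, the item $j\in\cS_1\triangle\cS_2$ is random, so controlling this union forces you to sum $n$ anti-concentration bounds. That inflates $C_n$ by a factor of $n$, which is incompatible with the claimed $\sqrt{\log n}$ dependence. Your remark about ``the $K$ items that could differ'' does not help, because those items are not fixed in advance. The fallback also fails. The inequality of \cite{belloni2024anticon} controls $\max_{\cS\in\cA}X_\cS-\max_{\cS\in\bcS^K\setminus\cA}X_\cS$ for a \emph{fixed} partition, and the second order statistic is not of that form.

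The missing idea is to replace a union by a supremum. The paper does not bound the coupling probability $\PP\{\cS_t(\bbeta)\neq\cS_t(\bbeta^*)\}$. Conditional on $\bv_t$, the quantity $\EE\{\bSigma_t(\bbeta)\mid\bbeta,\bv_t\}$ depends on $\bbeta$ only through the law of $\cS_t(\bbeta)$. So it suffices to bound the total variation distance between the laws of $\cS_t(\bbeta)$ and $\cS_t(\bbeta^*)$, which is a supremum over $\cA\subseteq\bcS^K$. For each fixed $\cA$, the events $\{\cS_t(\bbeta)\in\cA\}$ and $\{\cS_t(\bbeta^*)\in\cA\}$ differ only on $\{|\max_{\cS\in\cA}\xi_\cS-\max_{\cS\notin\cA}\xi_\cS|\le\varepsilon\}$. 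This is a single difference-of-maxima event, and the bound from Theorem~2.4 of \cite{belloni2024anticon} is uniform in $\cA$.

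If you prefer to keep your coupling, a random partition also repairs it. Draw $J\subseteq[n]$ with items included independently with probability $1/2$, independent of $\br_{t'}$, and set $\cA_J=\{\cS:|\cS\cap J|\text{ odd}\}$. Any two distinct assortments land on opposite sides with probability exactly $1/2$, so $\PP(\text{gap}\le 2\delta)\le 2\sup_{\cA}\PP(|\max_{\cA}X-\max_{\bcS^K\setminus\cA}X|\le 2\delta)$.

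Separately, your derivations of the $K^3$ and $(K/\rho^2\vee 1)$ factors are heuristic. In the paper they come from lower bounds on $1-\gamma$, where $\gamma=\max_{\cS\neq\cS'}\sigma_\cS^{-2}\sigma_{\cS\cS'}$. Under (1), an explicit computation using $\rho\le 2$ gives $1-\gamma\gtrsim K^{-2}$. Under (2), the overlap bound gives $1-\gamma\ge\rho^2/K\wedge 1$. These are combined with $\sigma_\cS^2\ge\sigma_r^2/(4K)$. The controlling quantity is thus a normalized covariance, not a lower bound on $\|w_\cS-w_{\cS'}\|_2$.
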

Then \eqref{eq: hes rate} holds by combining \eqref{eq: term I bd}, \eqref{eq: term II bd} and \eqref{eq: term III bd}.
\subsection{Proof of Claim~\ref{claim: cont bd}}\label{sec: proof claim cont bd}
It suffices to prove the following uniform bound: for all \(t\in[T]\),
\(k,\ell\in[p]\), and \(\bbeta\in\cB_1(\bbeta^*,3\tau)\),
\begin{equation}\label{eq: cont rate}
    \left[ \EE ( \bSigma_{t} (\bbeta) \mid \bbeta ) - 
    \EE ( \bSigma_{t} (\bbeta^*) \mid \bbeta^*) \right]_{k \ell}  \le C_n\cdot \nu^2 \tau + \nu^2 / T,
\end{equation}
for some rate \(C_n>0\).

Fix \(k,\ell\in[p]\), \(t\in[T]\), and
\(\bbeta\in\cB_1(\bbeta^*,3\tau)\). By the tower property,
\begin{align*}
\left[\EE\{\bSigma_t(\bbeta)\mid\bbeta\}\right]_{k\ell}
&=
\EE\!\left[
\EE\!\left\{[\bSigma_t(\bbeta)]_{k\ell}\mid \bbeta,\bv_t\right\}
\right] \\
&=
\EE\!\left[
\sum_{\cS\in\bcS^K}
\PP(\cS_t(\bbeta)=\cS\mid \bbeta,\bv_t)\,
x_{t,\cS}^{(k,\ell)}
\right],
\end{align*}
where the second equality follows from Assumption~\ref{asp: assortment const}:
the maximizer \(\cS_t(\bbeta)\) is unique with probability one
conditional on \(\bbeta\) and \(\bv_t\) under the Gaussian distribution of $\br_T$, and
\[
\begin{aligned}
x_{t,\cS}^{(k,\ell)}
&=
\sum_{j\in\cS}
\PP_{\bbeta^*,\bv_t}(j\mid\cS)\,
\bv_{tjk}\bv_{tj\ell}^{\top} \\
&\quad -
\Big(\sum_{j\in\cS}
\PP_{\bbeta^*,\bv_t}(j\mid\cS)\bv_{tjk}\Big)
\Big(\sum_{j\in\cS}
\PP_{\bbeta^*,\bv_t}(j\mid\cS)\bv_{tj\ell}\Big)^{\top}.
\end{aligned}
\]
By Assumption~\ref{asp: cov}, \(|x_{t,\cS}^{(k,\ell)}|\le\nu^2\), and hence
\begin{align*}
& \left| \EE ( [\bSigma_{t} (\bbeta)]_{k \ell} \mid \bbeta ) -   \EE ( [\bSigma_{t} (\bbeta^*)]_{k \ell} \mid \bbeta^*)  \right|  \\
    & \le  \EE \Big( \sum_{\cS \in \bcS^K}\left | \PP(\cS_t(\bbeta) = \cS| \bbeta, \bv_{t}) - \PP(\cS_t(\bbeta^*) = \cS| \bbeta^*, \bv_{t}) \right| \cdot |x_{t, \cS}^{(k,\ell)} |\Big)\\
     & \le \nu^2 \cdot \EE \Big( \sum_{\cS \in \bcS^K}\left | \PP(\cS_t(\bbeta) = \cS| \bbeta, \bv_{t}) - \PP(\cS_t(\bbeta^*) = \cS| \bbeta^*, \bv_{t}) \right| \Big) .
\end{align*}
Conditional on $\bv_t$, we have
\begin{equation}\label{eq: total var}
    \begin{aligned}
   &  \sum_{\cS \in \bcS^K} \Big| \PP(\cS_t (\bbeta) = \cS| \bbeta, \bv_{t}) - \PP(\cS_t(\bbeta^*) = \cS| \bbeta^*, \bv_{t}) \Big| \\
   & = 2 \|\PP(\cS_t(\bbeta) | \bbeta, \bv_{t}) -\PP(\cS_t(\bbeta^*) | \bbeta^*, \bv_{t})\|_{\rm TV}\\
   & = 2 \sup_{\cA \subseteq \bcS^K} \Big| \PP(\cS_t (\bbeta) \in \cA |\bbeta,  \bv_{t}) - \PP(\cS_t (\bbeta^*)\in \cA |\bbeta^*, \bv_{t}) \Big|\\
   & = 2 \sup_{\cA \subsetneq \bcS^K}  \Bigg| \PP\left( \max_{\cS \in \cA } \sum_{j \in \cS}  \PP_{\bbeta, \bv_t}(j|\cS) \cdot r_{t j} \ge \max_{\cS' \in \bcS^K \backslash \cA }\sum_{j \in \cS'}  \PP_{\bbeta, \bv_t}(j|\cS') \cdot r_{t j} \,\, \bigg| \bbeta, \bv_{t}\right)\\
   & \quad - \PP\left( \max_{\cS \in \cA } \sum_{j \in \cS}  \PP_{\bbeta^*, \bv_t}(j|\cS)\cdot r_{t j} \ge \max_{\cS' \in \bcS^K \backslash \cA }\sum_{j \in \cS'}  \PP_{\bbeta^*, \bv_t}(j|\cS')\cdot r_{t j} \,\, \bigg| \bbeta^*, \bv_{t}\right) \Bigg|.
   \end{aligned}
\end{equation}

Under Assumption~\ref{asp: assortment const}, with probability at least \(1-T^{-1}\), we have
  \begin{equation}\label{eq: max rev bd}
        \max_{j \in [n]}|r_{t j}| \le \sigma_r\sqrt{2\log(2n)+2\log T}+\bar\mu,
  \end{equation}
  and in turn 
  \begin{equation}\label{eq: set wise rev diff}
       \begin{aligned}
      \max_{\cS \in \bcS^K} \left| \sum_{j \in \cS} \Big( \PP_{ \bbeta, \bv_t}(j|\cS) -  \PP_{\bbeta^*, \bv_t}(j|\cS) \Big)\cdot r_{t j}  \right| & \le \max_{\cS \in \bcS^K } \| \nabla_{\bbeta} R(\cS \mid \tilde\bbeta, \bv_t, \br_t) \|_{\infty} \cdot \|\bbeta - \bbeta^*\|_1\\
      & \le 2 \max_{j \in [n]}\|\bv_{t j}\|_{\infty} \cdot \max_{j \in [n]}|r_{t j}| \cdot  \|\bbeta - \bbeta^*\|_1\\ 
      & \le 2 \nu (\sigma_r\sqrt{2\log(2n)+2\log T}+\bar\mu) \|\bbeta - \bbeta^*\|_1,
  \end{aligned}
  \end{equation}
where the revenue gradient $\nabla_{\bbeta} R(\cS \mid \bbeta, \bv_t, \br_t)$ is given in \eqref{eq: gradient revenue}, 
and $\tilde\bbeta$ lies between $\bbeta$ and $\bbeta^*$. 

Define \(\varepsilon=12\nu(\sigma_r\sqrt{2\log(2n)+2\log T}+\bar\mu)\tau\).
Combining \eqref{eq: total var} and \eqref{eq: set wise rev diff} gives
   \begin{equation}\label{eq: tv bound}
   \begin{aligned}
        & \|\PP(\cS_t (\bbeta) | \bbeta, \bv_{t}) - \PP(\cS_t(\bbeta^*) |\bbeta^*, \bv_{t})\|_{\rm TV} \\
        & \quad \le \sup_{\cA \subsetneq \bcS^K} \left| \PP \Big(  - \varepsilon  \le \max_{\cS \in \cA } \xi_{\cS} - \max_{\cS' \in \bcS^K \backslash \cA } \xi_{\cS'} \le \varepsilon\mid\bbeta^*, \bv_{t}\Big)\right| + T^{-1},
   \end{aligned}
\end{equation}
where we define
\(\xi_{\cS}:=\sum_{j\in\cS}\PP_{\bbeta^*,\bv_t}(j\mid\cS)\,r_{tj}\)
for \(\cS\in\bcS^K\). It remains to bound the first term on the
right-hand side of \eqref{eq: tv bound} using the anti-concentration
inequality in Theorem~2.4 of \cite{belloni2024anticon}. To apply the
theorem, we verify that the covariance matrix
\(\{\xi_{\cS}\}_{\cS\in\bcS^K}\) satisfies the required pairwise
conditions.
 
Specifically, for any \(\cS, \cS' \in \bcS^K\), let
\[
\sigma_{\cS\cS'} := \EE\!\left[\,(\xi_{\cS}-\mu_{\cS})(\xi_{\cS'}-\mu_{\cS'}) \,\big|\, \bv_t \right]
\]
denote the covariance conditional on \(\bv_t\), where \(\mu_{\cS} := \EE(\xi_{\cS}\mid \bv_t)\), and let \(\sigma_{\cS}^2 := \sigma_{\cS\cS}\) be the marginal variance of \(\xi_{\cS}\). Under Assumption~\ref{asp: cov}, we have
\begin{align*}
    \sigma_r^2 &\ge \sigma_{\cS}^2 = \sigma_r^2 \sum_{j \in \cS}  \PP_{\bbeta^*, \bv_t}(j \mid \cS)^2 \ge \frac{\sigma_r^2}{K}  \left(\sum_{j \in \cS} \PP_{\bbeta^*, \bv_t}(j \mid \cS)\right)^2 \\
    & = \frac{\sigma_r^2}{K}  \left(1 - \PP_{\bbeta^*, \bv_t}(0 \mid \cS)\right)^2\ge \frac{\sigma_r^2}{K} \left(1 - \frac{1}{1 + K /\rho
    }\right)^2 \ge \frac{\sigma_r^2}{4 K},
\end{align*}
where $\rho $ is the constant defined in Assumption~\ref{asp: cov}. Define
\[
\gamma \;:=\; \max_{\substack{\cS,\cS'\in \bcS^K, \,\, \cS\neq \cS'}} \sigma_{\cS}^{-2}\,\sigma_{\cS\cS'}.
\]
We will apply Theorem~2.4 of \cite{belloni2024anticon} by controlling the rate of $\gamma$ to bound the anti-concentration term and derive the corresponding order of \(C_n\) under each of the following conditions listed in Assumption~\ref{asp: assortment const}:

\noindent (1) If $\bcS^K = \{\cS: \cS \subseteq [n], |\cS| = K\}$ and $\rho \le 2$.

\noindent (2) If $\bcS^K \subseteq \{\cS: \cS \subseteq [n], |\cS| = K\}$ and for any $\cS, \cS' \in \bcS^K$, $|\cS \cap \cS'| \le (K /\rho^2 - 1)\vee 0$.

We first compute $\gamma$ and $C_n$ when the condition (1) is satisfied. For any two $\cS, \cS' \in \bcS^K$ and $\cS \ne \cS'$, denote $\cD = \cS \cap \cS'$. For this display, write
\[
A=\sum_{j\in\cD}u_{tj}^*,\quad
B=\sum_{j'\in\cS\backslash\cD}u_{tj'}^*,\quad
C=\sum_{j''\in\cS'\backslash\cD}u_{tj''}^*,
\]
and
\[
A_2=\sum_{j\in\cD}(u_{tj}^*)^2,\qquad
B_2=\sum_{j'\in\cS\backslash\cD}(u_{tj'}^*)^2.
\]
Under Assumption~\ref{asp: cov}, the following holds
\begin{align*}
1 - \sigma_{\cS}^{-2} \sigma_{\cS\cS'}
&= 1 - \frac{(1+A+B)A_2}{(1+A+C)(A_2+B_2)}\\
&= \frac{1}{1+A+C}
\left\{(1+A+C)-\frac{(1+A+B)A_2}{A_2+B_2}\right\}\\
&= \frac{1}{1+A+C}
\left\{C+\frac{(1+A)B_2}{A_2+B_2}
-\frac{BA_2}{A_2+B_2}\right\}\\
&\ge \frac{1}{1+A+C}
\left\{C+\frac{(1+A)B}{\rho A+B}
-\frac{\rho BA}{\rho A+B}\right\}\\
&\ge
\frac{B}{1+A+C}
\times
\frac{1+B/\rho+(2-\rho)A}{\rho A+B}
\gtrsim K^{-2},
\end{align*}
which gives $1 - \gamma \gtrsim K^{-2}$. By Theorem~2.4 in \cite{belloni2024anticon}, we have 
\begin{align*}
    & \max_{\cA \subsetneq \bcS^K} \PP\Big(\big| \max_{\cS' \in \cA^c} \xi_{\cS'} - \max_{\cS \in \cA} \xi_{\cS} \big| \le \varepsilon \,\, \big| \bv_{t} \Big) \lesssim \EE \left({\max_{\cS \in \bcS^K } |\xi_{\cS} - \mu_{\cS} | } \,\, \big| \bv_{t} \right)  \frac{{K}\varepsilon}{(1 - \gamma)\sigma_r^2 } \\
     & \lesssim \EE\left[ \max_{j \in [n]} |r_{t j} - \mu_{rj}| \right]  \frac{K^3 \varepsilon}{\sigma_r^2 } \lesssim 
     \frac{K^3 \varepsilon \sqrt{\log n}}{\sigma_r } \lesssim 
    \bar\mu  K^3 \nu \sigma_r^{-1}  \sqrt{\log n} \tau.
\end{align*}
Plugging this bound into \eqref{eq: tv bound} gives
\begin{align*}
        &\EE \|\PP(\cS_t (\bbeta) | \bbeta, \bv_{t})
        - \PP(\cS_t(\bbeta^*) | \bbeta^*, \bv_{t})\|_{\rm TV} \\
        &\quad \le
        \EE \left(
        \max_{\cA \subsetneq \bcS^K}
        \PP\Big(\big| \max_{\cS' \in \cA^c} \xi_{\cS'}
        - \max_{\cS \in \cA} \xi_{\cS} \big|
        \le \varepsilon \,\, \big| \bv_{t} \Big)
        \right) + T^{-1}\\
        & \quad \lesssim   \bar\mu  K^3 \nu \sigma_r^{-1}  \sqrt{\log n} \tau + T^{-1},
   \end{align*}
   which in turn gives
\begin{align*}
    & \left| \EE ( [\bSigma_{t} (\bbeta)]_{k \ell} \mid \bbeta )
    - \EE ( [\bSigma_{t} (\bbeta^*)]_{k \ell} \mid \bbeta^*) \right| \\
    & \quad \lesssim \nu^2 \cdot
    \EE \|\PP(\cS_t(\bbeta) | \bbeta, \bv_{t})
    - \PP(\cS_t(\bbeta^*) |\bbeta^*, \bv_{t})\|_{\rm TV} \\
    & \lesssim   \bar\mu  K^3 \nu^3 \sigma_r^{-1}  \sqrt{\log n} \tau  +  \nu^2 / T,
\end{align*}
and \eqref{eq: cont rate} holds with $C_n = O(\bar\mu  K^3 \nu \sigma_r^{-1}  \sqrt{\log n})$.

Next consider the rates of \(\gamma\) and \(C_n\) under condition~(2). For any pair of candidate assortments $\cS, \cS' \in \bcS^K$, without loss of generality, assume that $\sigma_{\cS} \le \sigma_{\cS'}$. When $ K /\rho^2 - 1 \ge 0$, we have that
\begin{align*}
     \sigma_{\cS}^2  &= \sigma_r^2 \sum_{j \in \cS} \PP_{\bbeta^*, \bv_t}(j|\cS)^2 \ge \sigma_r^2 K^{-1} \left( \sum_{j \in \cS} \PP_{\bbeta^*, \bv_t}(j|\cS)\right)^2 = \sigma_r^2 / K,\\
   \sigma_{\cS\cS'} &= \sigma_r^2 \sum_{j' \in \cS \cap \cS'} \PP_{\bbeta^*, \bv_t}(j'|\cS) \cdot \PP_{\bbeta^*, \bv_t}(j'|\cS') \le \sigma_r^2 \cdot |\cS \cap \cS'| \cdot \rho^2/K^2 \\
  & \le \sigma_r^2 (K/\rho^2 - 1) \rho^2/K^2 = \sigma_r^2 (1/K -  \rho^2/K^2),
\end{align*}
and in turn 
\[
1-\sigma_{\cS}^{-2}\sigma_{\cS\cS'}
\ge
1-\sigma_r^2(1/K-\rho^2/K^2)K/\sigma_r^2
=
\rho^2/K,
\]
where the last bound gives \(1-\gamma\ge \rho^2/K\).

On the other hand, when $ K /\rho^2 - 1 < 0$, we have $\sigma_{\cS\cS'} = 0$ and $1 - \gamma = 1$. Combining the results when $ K /\rho^2 - 1 \ge 0$ and $ K /\rho^2 - 1 < 0$ respectively, we have that $1 - \gamma  \ge (\rho^2/K\wedge 1)$.

For any $\bv_t$, applying Theorem~2.4 of \cite{belloni2024anticon} leads to the following bound,
\begin{align*}
    & \max_{\cA \subsetneq \bcS^K} \PP\Big(\big| \max_{\cS' \in \cA^c} \xi_{\cS'} - \max_{\cS \in \cA} \xi_{\cS} \big| \le \varepsilon | \bv_{t} \Big) \lesssim \EE\left[ \max_{j \in [n]} |r_{t j} - \mu_{rj}| \right] \frac{K\varepsilon}{\sigma_r^2 } \left(\frac{K}{\rho^2} \vee 1\right)\\
    & \lesssim \frac{K\varepsilon \sqrt{\log n}}{\sigma_r } \left(\frac{K}{\rho^2} \vee 1\right) \le K \bar\mu   \nu \sigma_r^{-1} \sqrt{\log n}(K/\rho^2 \vee 1)\tau. 
\end{align*}
Applying the above bound to \eqref{eq: tv bound} gives
\begin{align*}
    & \left| \EE ( [\bSigma_{t} (\bbeta)]_{k \ell} \mid \bbeta ) -   \EE ( [\bSigma_{t} (\bbeta^*)]_{k \ell} \mid \bbeta^*)  \right|\lesssim \nu^2  \EE \|\PP(\cS_t(\bbeta) |\bbeta, \bv_{t}) - \PP(\cS_t(\bbeta^*) | \bbeta^*, \bv_{t})\|_{\rm TV} \\
    & \lesssim K \bar\mu   \nu^3 \sigma_r^{-1} \sqrt{\log n}(K/\rho^2 \vee 1)\tau +  \nu^2 
     / T,
\end{align*}
and \eqref{eq: cont rate} holds with $C_n = O\big(K \bar\mu   \nu \sigma_r^{-1} \sqrt{\log n}(K/\rho^2 \vee 1)\big)$.
\subsection{Proof of Lemma~\ref{lm: MTG coupling}}
\label{sec: proof lm MTG coupling}

For \(t=0,1,\ldots,T-1\), let \(\cF_t:=\sigma(\cH_t)\). Then
\(\cF_0\subseteq \cF_1\subseteq\cdots\subseteq\cF_{T-1}\) is an increasing sequence of
\(\sigma\)-fields. We adopt the notation of Theorem~2.1 and Proposition~2.1 in
\cite{maias2025mtgcoupling}, replacing their filtration notation
\((\cH_0,\ldots,\cH_n)\) by \((\cF_0,\ldots,\cF_{T-1})\). Under our setup, one may identify
\[
\begin{aligned}
\bS
&=
-(T-1)^{-1/2}\sum_{t=1}^{T-1}
(\bSigma_{\cI_*}^*)^{-1/2}
\Big\{
\tilde\bv_{t,i_t}
-
\EE_{\bbeta^*,t,\cS_t}\big(\tilde\bv_{t,i_t}\big)
\Big\},\\
\tilde X_t
&:=
\EE(\bS\mid \cF_t)-\EE(\bS\mid \cF_{t-1})\\
&=
-(T-1)^{-1/2}
(\bSigma_{\cI_*}^*)^{-1/2}
\Big\{
\tilde\bv_{t,i_t}
-
\EE_{\bbeta^*,t,\cS_t}\big(\tilde\bv_{t,i_t}\big)
\Big\},\\
V_t
&:=
\Cov(\tilde X_t\mid \cF_{t-1})\\
&=
(T-1)^{-1}
(\bSigma_{\cI_*}^*)^{-1/2}
\Big[\EE\!\big(\bSigma_t(\hat\bbeta_{t-1})\mid \hat\bbeta_{t-1}\big)\Big]_{\cI_*,\cI_*}
(\bSigma_{\cI_*}^*)^{-1/2},
\end{aligned}
\]
and hence
\[
\bS= \sum_{t=1}^{T-1}\tilde X_t,
\qquad
U=0,
\qquad
\phi_2(s_*)=\sqrt{2s_*},
\qquad
\Sigma=\Ib_{s_*},
\qquad
\Omega=\sum_{t=1}^{T-1}V_t-\Ib_{s_*}.
\]
Here we write \(\tilde\bv_{tj}:=[\bv_{tj}]_{\cI_*}\) and
\[
\tilde\bv_{t,i_t}
:=
\sum_{j\in \cS_t\cup\{0\}}\tilde\bv_{tj}\,\II(j=i_t).
\]

The expression for \(\tilde X_t\) follows from the martingale difference property. Indeed, for
\(t>t'\),
\[
\begin{aligned}
\EE\!\left(
\tilde\bv_{t,i_t}-\EE_{\bbeta^*,t,\cS_t}\big(\tilde\bv_{t,i_t}\big)
\;\middle|\; \cF_{t'}
\right)
&=
\EE\!\left(
\EE\!\left(
\tilde\bv_{t,i_t}-\EE_{\bbeta^*,t,\cS_t}\big(\tilde\bv_{t,i_t}\big)
\;\middle|\; \bv_t,\br_t,\cF_{t-1}
\right)
\middle|\cF_{t'}
\right)\\
&=
\EE\!\left(
\EE_{\bbeta^*,t,\cS_t}\big(\tilde\bv_{t,i_t}\big)
-
\EE_{\bbeta^*,t,\cS_t}\big(\tilde\bv_{t,i_t}\big)
\;\middle|\; \cF_{t'}
\right)
=0,
\end{aligned}
\]
whereas for \(t\le t'\),
\[
\EE\!\left(
\tilde\bv_{t,i_t}-\EE_{\bbeta^*,t,\cS_t}\big(\tilde\bv_{t,i_t}\big)
\;\middle|\; \cF_{t'}
\right)
=
\tilde\bv_{t,i_t}-\EE_{\bbeta^*,t,\cS_t}\big(\tilde\bv_{t,i_t}\big).
\]

We next bound \(\Omega\). By definition,
\[
\Omega
=
\frac{1}{T-1}
(\bSigma_{\cI_*}^*)^{-1/2}
\left[
\sum_{t=1}^{T-1}
\EE\!\big(\bSigma_t(\hat\bbeta_{t-1})\mid \hat\bbeta_{t-1}\big)
-
(T-1)\bSigma^*
\right]_{\cI_*,\cI_*}
(\bSigma_{\cI_*}^*)^{-1/2}.
\]
The proof of Claim~\ref{claim: cont bd} gives the pointwise local stability bound
\[
    \left\|
        \EE\{\bSigma_t(\bbeta)\mid\bbeta\}
        -
        \bSigma^*
    \right\|_{\max}
    \lesssim
    C_n\nu^2\|\bbeta-\bbeta^*\|_1+\nu^2/T,
    \qquad
    \bbeta\in\cB_1(\bbeta^*,3\tau).
\]
Let
\[
    t_0
    :=
    C\frac{\nu^4s^2\log(Tp)}{\underline\lambda^2},
\]
where $C$ is the constant $C'$ in Theorem~\ref{thm:lasso-rates}.
For \(t<t_0\), the constraint gives
\(\|\hat\bbeta_{t-1}-\bbeta^*\|_1\lesssim\tau\), and hence
\[
    C_n\nu^2\sum_{t<t_0}\|\hat\bbeta_{t-1}-\bbeta^*\|_1
    \lesssim
    C_n\nu^2t_0\tau
    \lesssim
    \nu^2\log(Tp),
\]
where the last inequality uses
\[
    \tau\le c\frac{\underline\lambda^2}{C_n\nu^4s^2}.
\]
For \(t\ge t_0\), Theorem~\ref{thm:lasso-rates} gives uniformly
\[
    \|\hat\bbeta_{t-1}-\bbeta^*\|_1
    \lesssim
    \nu s\sqrt{\frac{\log(Tp)}{\underline\lambda^2t}} .
\]
Therefore,
\[
\begin{aligned}
C_n\nu^2
\sum_{t=t_0}^{T-1}
\|\hat\bbeta_{t-1}-\bbeta^*\|_1
&\lesssim
C_n\nu^3s
\frac{\sqrt{\log(Tp)}}{\underline\lambda}
\sum_{t=t_0}^{T-1}t^{-1/2}\\
&\lesssim
\frac{C_n\nu^3s\sqrt{T\log(Tp)}}{\underline\lambda}.
\end{aligned}
\]
Combining the early and late time ranges, we obtain
\begin{align*}
     \left\|
    \sum_{t=1}^{T-1}
    \EE\!\big(\bSigma_t(\hat\bbeta_{t-1})\mid \hat\bbeta_{t-1}\big)
    -
    (T-1)\bSigma^*
    \right\|_{\max}
    &\lesssim
    \frac{C_n\nu^3s\sqrt{T\log(Tp)}}{\underline\lambda}
    +
    \nu^2\log(Tp)\\
    & \lesssim  \frac{C_n\nu^3s\sqrt{T\log(Tp)}}{\underline\lambda},
\end{align*}
where in the last line the term \(\nu^2\log(Tp)\) is absorbed using
\(C_n\gtrsim\nu\), \(\nu^2s\gtrsim\underline\lambda\), and
\(T\ge C\nu^8s^4\log(Tp)/\underline\lambda^4\) with \(C>0\) sufficiently large.
Consequently,
\[
\begin{aligned}
\|\Omega\|_2
&\le
\frac{s_*}{(T-1)\underline\lambda}
\left\|
    \sum_{t=1}^{T-1}
    \EE\!\big(\bSigma_t(\hat\bbeta_{t-1})\mid \hat\bbeta_{t-1}\big)
    -
    (T-1)\bSigma^*
\right\|_{\max}\\
&\lesssim
\frac{C_n\nu^3s s_*\sqrt{\log(Tp)}}{\underline\lambda^2\sqrt T}.
\end{aligned}
\]

It remains to bound
\[
\beta_{2,2}
:=
\sum_{t=1}^{T-1}
\EE \Big\{\|\tilde X_t\|_2^3+\|V_t^{1/2}Z_t\|_2^3\Big\},
\]
where \(Z_1,\ldots,Z_{T-1}\in\R^{s_*}\) are i.i.d. standard Gaussian vectors, independent of \(\cF_{T-1}\). Since \(\|\tilde\bv_{t,i_t}-\EE_{\bbeta^*,t,\cS_t}(\tilde\bv_{t,i_t})\|_2\lesssim \sqrt{s_*}\nu\) and
\(\lambda_{\min}(\bSigma_{\cI_*}^*)\ge\underline\lambda\), we have
\[
\max_{t\in[T-1]}
\EE \|\tilde X_t\|_2^3
\lesssim
T^{-3/2}\underline\lambda^{-3/2}s_*^{3/2}\nu^3.
\]
We also have
\[
\max_{t\in[T-1]}
\EE \|V_t^{1/2}Z_t\|_2^3
\lesssim
T^{-3/2}s_*^{3/2}.
\]
For the second bound, note that the pointwise version of Claim~\ref{claim: cont bd} gives
\[
    \left\|
        \EE\{\bSigma_t(\hat\bbeta_{t-1})\mid\hat\bbeta_{t-1}\}
        -
        \bSigma^*
    \right\|_{\max}
    \lesssim
    C_n\nu^2\tau+\nu^2/T.
\]
Under
\[
    \tau\le c\frac{\underline\lambda^2}{C_n\nu^4s^2}
\]
with \(c>0\) sufficiently small, this implies
\[
V_t
\preceq
\frac{C}{T}\,
(\bSigma_{\cI_*}^*)^{-1/2}
\bSigma_{\cI_*}^*
(\bSigma_{\cI_*}^*)^{-1/2}
=
\frac{C}{T}\,\Ib_{s_*}.
\]
Therefore
\[
\|V_t^{1/2}Z_t\|_2
\le
(C/T)^{1/2}\|Z_t\|_2,
\]
and hence
\[
\EE\|V_t^{1/2}Z_t\|_2^3
\lesssim
T^{-3/2}\EE\|Z_t\|_2^3
\lesssim
T^{-3/2}s_*^{3/2}.
\]
Combining the preceding bounds yields
\[
    \beta_{2,2}
    \lesssim
    T^{-1/2}\underline\lambda^{-3/2}s_*^{3/2}\nu^3.
\]

Applying Proposition~2.1 in \cite{maias2025mtgcoupling}, for any \(\eta>0\), there exists a random vector
\(\bxi\mid\cF_0\sim\cN(0,\Ib_{s_*})\) such that
\[
\begin{aligned}
\PP(\|\bS-\bxi\|_2>\eta\mid \hat\bbeta_0)
&\lesssim
\left(
    \frac{\phi_2(s_*)^2\beta_{2,2}}{\eta^3}
\right)^{1/3}
+
\left(
    \frac{\phi_2(s_*)^2\|\Omega\|_2}{\eta^2}
\right)^{1/3}\\
&\lesssim
\left(
    \frac{s_*^{5/2}\nu^3}
    {T^{1/2}\underline\lambda^{3/2}\eta^3}
\right)^{1/3} +
\left(
    \frac{
        C_n\nu^3s\,s_*^2\sqrt{\log(Tp)}
    }{
        \underline\lambda^2\sqrt T\,\eta^2
    }
\right)^{1/3}.
\end{aligned}
\]
Since \(\cF_0=\sigma(\hat\bbeta_0)\), this proves \eqref{eq: MTG coupling}.
\subsection{Proof of Lemma~\ref{lm: rev decomp}}
\label{sec: proof lm rev decomp}

By \eqref{eq: gradient revenue}, on the event \(\cE\), \eqref{eq: grad T bd} holds. Fix
\(\cS\in\bcS^K\). For \(j\in\cS_+\),
\[
\nabla_{\bbeta}\PP_{\bbeta,\bv_T}(j\mid \cS)
=
\PP_{\bbeta,\bv_T}(j\mid \cS)
\left(
\bv_{Tj}
-
\sum_{j'\in\cS}\PP_{\bbeta,\bv_T}(j'\mid \cS)\bv_{Tj'}
\right).
\]
Therefore,
\begin{align*}
&\nabla_{\bbeta}^2 R(\cS\mid \bbeta,\bv_T,\br_T)\\
&=
\sum_{j\in\cS}\PP_{\bbeta,\bv_T}(j\mid \cS)\,r_{Tj}\,\bv_{Tj}\bv_{Tj}^{\top}
-
\Big(\sum_{j\in\cS}\PP_{\bbeta,\bv_T}(j\mid \cS)\,r_{Tj}\Big)
\Big(\sum_{j\in\cS}\PP_{\bbeta,\bv_T}(j\mid \cS)\,\bv_{Tj}\bv_{Tj}^{\top}\Big)\\
&\quad -
\Big(\sum_{j\in\cS}\PP_{\bbeta,\bv_T}(j\mid \cS)\,r_{Tj}\,\bv_{Tj}\Big)
\Big(\sum_{j\in\cS}\PP_{\bbeta,\bv_T}(j\mid \cS)\,\bv_{Tj}\Big)^{\top}\\
&\quad -
\Big(\sum_{j\in\cS}\PP_{\bbeta,\bv_T}(j\mid \cS)\,\bv_{Tj}\Big)
\Big(\sum_{j\in\cS}\PP_{\bbeta,\bv_T}(j\mid \cS)\,r_{Tj}\,\bv_{Tj}\Big)^{\top}\\
&\quad +
2\Big(\sum_{j\in\cS}\PP_{\bbeta,\bv_T}(j\mid \cS)\,r_{Tj}\Big)
\Big(\sum_{j\in\cS}\PP_{\bbeta,\bv_T}(j\mid \cS)\,\bv_{Tj}\Big)
\Big(\sum_{j\in\cS}\PP_{\bbeta,\bv_T}(j\mid \cS)\,\bv_{Tj}\Big)^{\top}.
\end{align*}
Hence, on the event \(\cE\), \eqref{eq: hes T bd} holds.

We now prove \eqref{eq: rev decomp}. A second-order Taylor expansion gives
\begin{align*}
R(\cS\mid \tilde\bbeta^{\,d},\bv_T,\br_T)-R(\cS\mid \bbeta^*,\bv_T,\br_T)
&=
\nabla_{\bbeta}R(\cS\mid \bbeta^*,\bv_T,\br_T)^{\top}
(\tilde\bbeta^{\,d}-\bbeta^*) \\
&\quad+
\frac12(\tilde\bbeta^{\,d}-\bbeta^*)^{\top}
\nabla_{\bbeta}^2R(\cS\mid \tilde\bbeta,\bv_T,\br_T)
(\tilde\bbeta^{\,d}-\bbeta^*),
\end{align*}
where \(\tilde\bbeta\) lies on the line segment between \(\tilde\bbeta^{\,d}\) and
\(\bbeta^*\). By Corollary~\ref{col: error decomp debias lasso},
\[
[\tilde\bbeta^{\,d}]_{\cI_*}-[\bbeta^{*}]_{\cI_*}
=
-\frac{1}{T-1}(\bSigma_{\cI_*}^{*})^{-1}
[\nabla_{\bbeta}\ell_{T-1}(\bbeta^{*})]_{\cI_*}
+\Rb,
\]
and
\[
[\tilde\bbeta^{\,d}]_{\cI_{\rm wk}}=\mathbf 0,
\qquad
[\tilde\bbeta^{\,d}]_{\cI_0^c}=\mathbf 0.
\]
Therefore,
\begin{align*}
\hat R_{T,\cS}
-
R^*_{T,\cS}
&=
R(\cS\mid \tilde\bbeta^{\,d},\bv_T,\br_T)
-
R(\cS\mid \bbeta^*,\bv_T,\br_T)\\
&=
-\frac{1}{\sqrt{T-1}}\,
\big[\nabla_{\bbeta}R(\cS\mid \bbeta^*,\bv_T,\br_T)\big]_{\cI_*}^{\top}
[\bSigma_{\cI_*}^*]^{-1/2}\boldsymbol S\\
&\quad+
\big[\nabla_{\bbeta}R(\cS\mid \bbeta^*,\bv_T,\br_T)\big]_{\cI_*}^{\top}\Rb\\
&\quad-
\big[\nabla_{\bbeta}R(\cS\mid \bbeta^*,\bv_T,\br_T)\big]_{\cI_{\rm wk}}^{\top}
[\bbeta^*]_{\cI_{\rm wk}}\\
&\quad+
\frac12(\tilde\bbeta^{\,d}-\bbeta^*)^{\top}
\nabla_{\bbeta}^2R(\cS\mid \tilde\bbeta,\bv_T,\br_T)
(\tilde\bbeta^{\,d}-\bbeta^*).
\end{align*}
Thus
\begin{align*}
&\max_{\cS\in\bcS^K}
\Bigg|
\hat R_{T,\cS}
-
R^*_{T,\cS}
+
\frac{1}{\sqrt{T-1}}\,
g_{\cS}^{\top}
[\bSigma_{\cI_*}^*]^{-1/2}\boldsymbol S
\Bigg|\\
&\quad\le
\max_{\cS\in\bcS^K}
\big\|
\big[\nabla_{\bbeta}R(\cS\mid \bbeta^*,\bv_T,\br_T)\big]_{\cI_*}
\big\|_2
\|\Rb\|_2\\
&\qquad+
\max_{\cS\in\bcS^K}
\big\|
\nabla_{\bbeta}R(\cS\mid \bbeta^*,\bv_T,\br_T)
\big\|_{\infty}
\|[\bbeta^*]_{\cI_{\rm wk}}\|_1\\
&\qquad+
\max_{\cS\in\bcS^K}
\sup_{\bbeta\in\R^p}
\big\|
\nabla_{\bbeta}^2R(\cS\mid \bbeta,\bv_T,\br_T)
\big\|_{\max}
\|\tilde\bbeta^{\,d}-\bbeta^*\|_1^2 .
\end{align*}

By \eqref{eq: grad T bd},
\[
\max_{\cS\in\bcS^K}
\big\|
\nabla_{\bbeta}R(\cS\mid \bbeta^*,\bv_T,\br_T)
\big\|_{\infty}
\le
4\nu\bar\mu,
\]
and
\[
\max_{\cS\in\bcS^K}
\big\|
[\nabla_{\bbeta}R(\cS\mid \bbeta^*,\bv_T,\br_T)]_{\cI_*}
\big\|_2
\le
4\nu\bar\mu\sqrt{s_*}.
\]

By Corollary~\ref{col: error decomp debias lasso},
\[
\|\Rb\|_2
\lesssim
\frac{\nu^3s_*^{3/2}\log(Tp)}{T\underline\lambda^2}
\left\{
    1+\frac{C_n\nu s+\nu^2s_*}{\underline\lambda}
\right\}
+
\frac{\nu^2\sqrt{s_*}}{\underline\lambda}
\|[\boldsymbol\beta^*]_{\cI_{\rm wk}}\|_1 .
\]
Consequently,
\begin{align*}
\max_{\cS\in\bcS^K}
\big\|
[\nabla_{\bbeta}R(\cS\mid \bbeta^*,\bv_T,\br_T)]_{\cI_*}
\big\|_2
\|\Rb\|_2
&\lesssim
\bar\mu
\frac{\nu^4s_*^2\log(Tp)}{T\underline\lambda^2}
\left\{
    1+\frac{C_n\nu s+\nu^2s_*}{\underline\lambda}
\right\}\\
&\quad+
\bar\mu
\frac{\nu^3s_*}{\underline\lambda}
\|[\boldsymbol\beta^*]_{\cI_{\rm wk}}\|_1 .
\end{align*}
The weak first-order term satisfies
\[
\max_{\cS\in\bcS^K}
\big\|
\nabla_{\bbeta}R(\cS\mid \bbeta^*,\bv_T,\br_T)
\big\|_{\infty}
\|[\bbeta^*]_{\cI_{\rm wk}}\|_1
\lesssim
\bar\mu\nu
\|[\bbeta^*]_{\cI_{\rm wk}}\|_1
\lesssim
\bar\mu
\frac{\nu^3s_*}{\underline\lambda}
\|[\bbeta^*]_{\cI_{\rm wk}}\|_1,
\]
where the last inequality follows from
\(\underline\lambda\le \nu^2s_*\).

It remains to bound the second-order revenue term. By the definition of the debiased estimator,
the KKT condition on \(\cI_*\), \eqref{eq: min eigen sig support}, and
\eqref{eq: support l1-rate},
\[
\begin{aligned}
\|\tilde\bbeta^{\,d}-\bbeta^*\|_1
&\le
\|[\hat\bbeta_{T-1}]_{\cI_*}-[\bbeta^*]_{\cI_*}\|_1
+
\left\|
\Big([\nabla_{\bbeta}^2\ell_{T-1}(\hat\bbeta_{T-1})]_{\cI_*,\cI_*}\Big)^{-1}
[\nabla_{\bbeta}\ell_{T-1}(\hat\bbeta_{T-1})]_{\cI_*}
\right\|_1\\
&\quad+
\|[\bbeta^*]_{\cI_{\rm wk}}\|_1\\
&\lesssim
\nu s_*\sqrt{\frac{\log(Tp)}{\underline\lambda^2T}}
+
\|[\bbeta^*]_{\cI_{\rm wk}}\|_1 .
\end{aligned}
\]
Therefore, by \eqref{eq: hes T bd},
\begin{align*}
&\max_{\cS\in\bcS^K}
\sup_{\bbeta\in\R^p}
\big\|
\nabla_{\bbeta}^2R(\cS\mid \bbeta,\bv_T,\br_T)
\big\|_{\max}
\|\tilde\bbeta^{\,d}-\bbeta^*\|_1^2\\
&\quad\lesssim
\bar\mu\nu^2
\left\{
\nu^2s_*^2\frac{\log(Tp)}{\underline\lambda^2T}
+
\|[\bbeta^*]_{\cI_{\rm wk}}\|_1^2
\right\}\\
&\quad\lesssim
\bar\mu
\frac{\nu^4s_*^2\log(Tp)}{\underline\lambda^2T}
+
\bar\mu
\frac{\nu^3s_*}{\underline\lambda}
\|[\bbeta^*]_{\cI_{\rm wk}}\|_1,
\end{align*}
where the last step uses
\(\underline\lambda\le\nu^2s_*\) and
\[
    \|[\bbeta^*]_{\cI_{\rm wk}}\|_1
    =
    o\left(
        \frac{1}{\nu}\sqrt{\frac{\log(Tp)}{T}}
    \right).
\]
Combining the three bounds yields \eqref{eq: rev decomp}, and the proof is complete.
\subsection{Proof of Lemma~\ref{lm: sphere coverage}}
\label{sec: proof lm sphere coverage}

Fix \(0<\eps\le1\) and \(\eb\in S^{s_*-1}\). For \(i\in[m]\), define the spherical cap
\[
\cC_{\eps}(\zeta_i)
:=
\{x\in S^{s_*-1}:\ \|x-\zeta_i\|_2\le \eps\}.
\]
We first show that
\begin{equation}\label{eq: cap area}
\PP\big(\eb\in \cC_{\eps}(\zeta_i)\big)
\ge
\sqrt{\frac{\pi}{8s_*}}\left(\frac{2\eps}{\pi}\right)^{s_*-1}.
\end{equation}
Since \(\zeta_i\) is uniform on \(S^{s_*-1}\), by rotational invariance we may equivalently treat \(\zeta_i\) as fixed and \(\eb\) as uniform on \(S^{s_*-1}\). Hence
\begin{align*}
\PP\big(\eb\in \cC_{\eps}(\zeta_i)\big)
&=
\frac{\operatorname{Area}\big(\cC_{\eps}(\zeta_i)\big)}
{\operatorname{Area}(S^{s_*-1})} \\
&=
\left[\frac{2\pi^{s_*/2}}{\Gamma(s_*/2)}\right]^{-1}
\frac{2\pi^{(s_*-1)/2}}{\Gamma((s_*-1)/2)}
\int_{0}^{2\arcsin(\eps/2)}\sin^{s_*-2}\psi\,d\psi  \\
&=
\frac{\Gamma(s_*/2)}{\Gamma((s_*-1)/2)\sqrt{\pi}}
\int_{0}^{2\arcsin(\eps/2)}\sin^{s_*-2}\psi\,d\psi .
\end{align*}
The second equality uses the hyperspherical cap area formula \cite{li2010concise}. Since \(0<\eps\le1\), we have \(2\arcsin(\eps/2)\le\pi/2\), and thus \(\sin\psi\ge(2/\pi)\psi\) on the integration range. Therefore,
\begin{align*}
\PP\big(\eb\in \cC_{\eps}(\zeta_i)\big)
&\ge
\frac{\Gamma(s_*/2)}{\Gamma((s_*-1)/2)\sqrt{\pi}}
\int_{0}^{2\arcsin(\eps/2)}
\left(\frac{2}{\pi}\psi\right)^{s_*-2}\,d\psi\\
&\ge
\frac{s_*-1}{\sqrt{2s_*\pi}}
\cdot
\frac{1}{s_*-1}
\cdot
\frac{\pi}{2}
\left(\frac{2\eps}{\pi}\right)^{s_*-1}\\
&=
\sqrt{\frac{\pi}{8s_*}}
\left(\frac{2\eps}{\pi}\right)^{s_*-1},
\end{align*}
where the second inequality uses Wendel's inequality together with \(2\arcsin(\eps/2)\ge\eps\). This proves \eqref{eq: cap area}.

Since \(\zeta_1,\ldots,\zeta_m\) are i.i.d., it follows that
\begin{align*}
\PP\big(\|\eb-\zeta_i\|_2>\eps,\ \forall i\in[m]\big)
&=
\Big\{1-\PP\big(\eb\in \cC_{\eps}(\zeta_i)\big)\Big\}^m \\
&\le
\left[
1-\sqrt{\frac{\pi}{8s_*}}\left(\frac{2\eps}{\pi}\right)^{s_*-1}
\right]^m \\
&\le
\exp\left\{-m\sqrt{\frac{\pi}{8s_*}}\left(\frac{2\eps}{\pi}\right)^{s_*-1}\right\},
\end{align*}
where the last inequality follows from \(1-x\le e^{-x}\) for \(x\ge0\). Taking complements completes the proof.
}

\bibliographystyle{imsart-number}
\bibliography{references}

\end{document}